\definecolor{red}{rgb}{1,0,0}
\definecolor{darkred}{RGB}{192,0,0}
\newcommand{\eps}{\varepsilon}
\newcommand{\ot}{\otimes}
\newcommand{\ootimes}{ \otimes \dots \otimes }
\DeclareMathOperator{\CenOp}{Cen}
\newcommand{\cen}[1]{\CenOp_{#1}}
\newcommand{\bfS}{\mathbf{S}}
\newcommand{\bfd}{\mathbf{d}}
\newcommand{\bfu}{\mathbf{u}}
\newcommand{\bfv}{\mathbf{v}}
\newcommand{\bfw}{\mathbf{w}}
\newcommand{\bfx}{\mathbf{x}}
\newcommand{\bfy}{\mathbf{y}}
\newcommand{\bfz}{\mathbf{z}}
\newcommand{\calB}{\mathcal{B}}
\newcommand{\calD}{\mathcal{D}}
\newcommand{\bbC}{\mathbb{C}}
\newcommand{\bbN}{\mathbb{N}}
\newcommand{\mfm}{\mathfrak{m}}
\newcommand{\bfalpha}{\boldsymbol{\alpha}}
\newcommand{\injects}{\hookrightarrow}
\renewcommand{\tilde}[1]{\widetilde{#1}}
\newcommand{\Id}{\mathrm{Id}}
\DeclareMathOperator{\Ann}{Ann}
\DeclareMathOperator{\cha}{char}
\DeclareMathOperator{\End}{End}
\DeclareMathOperator{\Ker}{Ker}
\DeclareMathOperator{\im}{Im}
\DeclarePairedDelimiter{\pa}{\langle}{\rangle}
\DeclarePairedDelimiter{\an}{\langle}{\rangle}
\newcommand{\GL}{\mathrm{GL}}
\DeclareMathOperator{\Gr}{Gr}
\newcommand{\hook}{\,\lrcorner}
\numberwithin{equation}{section}
\theoremstyle{definition}
\newtheorem{defn}[equation]{Definition}
\theoremstyle{plain}
\newtheorem{teo}[defn]{Theorem}
\newtheorem{problem}{Problem}
\newtheorem{prop}[defn]{Proposition}
\newtheorem{lem}[defn]{Lemma}
\newtheorem{cor}[defn]{Corollary}
\theoremstyle{remark}
\newtheorem{rem}[defn]{Remark}
\newtheorem{exam}[defn]{Example}
\newtheorem{example}[defn]{Example}
\title{Detecting direct sums of tensors and their limits}
\date{\today}
\author{Stefano Canino}
\address[Stefano Canino]{Dipartimento di Matematica, Università degli Studi di Trento, Via Sommarive 14, Povo, Trento, 38123, Italy.\newline
Wydział Matematyki, Informatyki i Mechaniki, Uniwersytet Warszawski,
 ul.~Stefana Banacha
2, 02-097 Warsaw, Poland.}
\email{stefano.canino@unitn.it}
\author{Cosimo Flavi}
\address[Cosimo Flavi]{Wydział Matematyki, Informatyki i Mechaniki, Uniwersytet Warszawski,
 ul.~Stefana Banacha
2, 02-097 Warsaw, Poland.}
\email{c.flavi@uw.edu.pl}
\author{Joachim Jelisiejew}
\address[Joachim Jelisiejew]{Wydział Matematyki, Informatyki i Mechaniki, Uniwersytet Warszawski,
 ul.~Stefana Banacha
2, 02-097 Warsaw, Poland.}
\email{j.jelisiejew@uw.edu.pl}
\thanks{Supported by National Science Centre grant 2023/50/E/ST1/0033.}
\keywords{Centroids, direct sum, tensors, classifications.}
\subjclass[2020]{Primary 14N07; secondary 15A69, 68Q15}
\begin{document}
\newcommand{\jjtodo}[1]{[{\color{violet}$\spadesuit$ #1}]}
\newcommand{\jjadd}{[{\color{violet}\texttt{ref}}]}
\newcommand{\jjch}[1]{\textcolor{violet}{#1}}
\newcommand{\kk}{\Bbbk}%

\maketitle

\begin{abstract}
    We generalize Mammana's classification of limits of direct sums to more than two factors. We also extend it from polynomials to arbitrary Segre-Veronese format, generalising and unifying results of  Buczy{\'n}ska-Buczy{\'n}ski-Kleppe-Teitler, Hwang, Wang, and Wilson. Remarkably, in such much more general setup it is still possible to characterise the possible limits. Our proofs are direct and based on the theory of centroids, in particular avoiding the delicate Betti number arguments.
\end{abstract}

\section{Introduction}
\noindent
The results of this paper tie together three independent research directions:
\begin{enumerate}
    \item finding and characterising direct sums of tensors, with two or more summands;
    \item analysing the geometry of the fibers of the gradient map;
    \item {giving new explicit conditions for a tensor to be of minimal border rank.}
\end{enumerate}

Let $T$ be a tensor, that is, an element of the space $S^{d_1} V_1 \ot S^{d_2} V_2
\ot\cdots \ot S^{d_e} V_e$. For example, if $e = 1$, then $T \in S^d V$ is a
homogeneous polynomial of degree $d$. We refer to this case as the
\emph{Veronese} format. The Veronese format is much better
understood thanks to methods coming from resolutions and regularity, which are
not available in the general setup. 
In the case where \[
d_1 = d_2 =  \cdots = d_e =1,
\]
we have $T\in V_1\ot \cdots \ot V_e$, which is the \emph{Segre} format.

\subsection{Direct sums}
To understand the complexity of $T$, it is very advantageous to present it as
a \emph{direct sum} $T = T' + T''$, where $T'$ and $T''$ are in disjoint
variables. This idea appears classically (e.g.~\cite[\S14.2]{BurgisserBook}), it is central to Strassen's Conjecture (for which
counterexamples exist, but are 
in very large dimensional spaces \cite{Shitov, BFPSS}), it generalises additive
decompositions of polynomials~\cite{Bernardi_Oneto_Taufer__On_GADs, BMT}, it
is related to smoothability via the notion of
\emph{cleaving}~\cite{BBKT_direct_sums, cjn13}, it appears for Gorenstein
algebras as their connected sum~\cite{Ananthnarayan_Avramov_Moore},
it is useful when numerically computing the rank, it appears prominently
in topology and geometry~\cite{Nemethi2, Nemethi, Ueda_Yoshinaga, Wang,
Wang_err} etc.
Of course, it is even better to present $T$ as a direct sum $T = T' + T'' +
T'''+ \cdots $ with more terms.

However, surprisingly little is known about the existence of such
decompositions. A notable exception is the case of polynomials.
\begin{teo}[{{\cite{Mammana}, \cite[Theorem~1.7 and (1)]{BBKT_direct_sums}},
    see also~\cite[\S4.1]{Beorchia}}]\label{refintro:BBKT:thm}
    Let $F\in S^d V$ be concise, that is, depend on all
    variables (see \autoref{ref:concise:def} for precise statement). Then the following are equivalent
    \begin{enumerate}
        \item\label{it:BBKTcases} $F$ is a direct sum $F_1 + F_2$ or a limit of such,
        \item\label{it:BBKTchar} $F$ is a direct sum $F_1 + F_2$ or it has the form
            \[
                F(\bfx, \bfy, \bfz) = \sum_{i=1}^k x_i \frac{\partial
                H(\bfy)}{\partial y_i} + G(\bfy, \bfz),
            \]
            where $\bfx = (x_1, \ldots ,x_k)$, $\bfy = (y_1, \ldots ,y_k)$,
            $\bfz = (z_1, \ldots ,z_{n-2k})$ form a basis of $V$ and $G$ and
            $H$ are homogeneous of degree $d$,
        \item\label{it:BBKTchar:gen} $\Ann(F)$ has a minimal generator of degree $d$.
    \end{enumerate}
\end{teo}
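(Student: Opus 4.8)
The plan is to route the whole statement through the subspace
\[
\Gamma(F)\;:=\;\{\,G\in S^{d}V \;:\; V^{*}\circ G\subseteq V^{*}\circ F\,\}\subseteq S^{d}V
\]
of forms whose first partials all lie in the span $\partial F:=V^{*}\circ F\subseteq S^{d-1}V$ of the first partials of $F$; note $F\in\Gamma(F)$, and conciseness of $F$ means precisely that $\eta\mapsto\eta\circ F$ is injective, hence an isomorphism onto $\partial F$, so every $G\in\Gamma(F)$ has a unique $\varphi_{G}\in\End(V^{*})$ with $\xi\circ G=\varphi_{G}(\xi)\circ F$ for all $\xi\in V^{*}$, with $\varphi_{F}=\id$ and $G\mapsto\varphi_{G}$ injective. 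First I would invoke---this is where the centroid machinery of the earlier sections enters---that the image of $G\mapsto\varphi_{G}$ is closed under composition, so that $\Gamma(F)$ becomes a finite-dimensional commutative unital $\kk$-algebra with identity $F$, canonically the centroid $C(F)$. The opening step is then a Macaulay-duality identity: taking perpendiculars inside $S^{d}V$ one gets $(V^{*}\cdot\Ann(F)_{d-1})^{\perp}=\{G:\xi\circ G\in\Ann(F)_{d-1}^{\perp}\text{ for all }\xi\}=\Gamma(F)$, because $\Ann(F)_{d-1}^{\perp}=\partial F$; since evaluation at $F$ gives $\dim\Ann(F)_{d}=\dim S^{d}V^{*}-1$, the number of degree-$d$ minimal generators of $\Ann(F)$ is $\dim\Ann(F)_{d}-\dim(V^{*}\cdot\Ann(F)_{d-1})=(\dim S^{d}V^{*}-1)-(\dim S^{d}V^{*}-\dim\Gamma(F))=\dim\Gamma(F)-1$. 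Thus (3) becomes equivalent to $\dim_{\kk}C(F)\ge 2$, and the theorem reduces to comparing this with (1) and (2).

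For (2)$\Rightarrow$(1) I would give an explicit degeneration. If $F$ is already a direct sum there is nothing to prove; otherwise $F=\sum_{i=1}^{k}x_{i}\,\partial H(\bfy)/\partial y_{i}+G(\bfy,\bfz)$ in suitable coordinates, with $H\ne 0$ by conciseness, and for $t\ne 0$ I put
\[
F_{t}\;:=\;\tfrac1t\bigl(H(\bfy+t\bfx)-H(\bfy)\bigr)+G(\bfy,\bfz)\;=\;\tfrac1t\,H(\bfy+t\bfx)+\Bigl(G(\bfy,\bfz)-\tfrac1t\,H(\bfy)\Bigr).
\]
For $t\ne 0$ the forms $y_{i}+tx_{i}$ $(1\le i\le k)$ together with $\bfy$ and $\bfz$ form a basis of $V$, and in these coordinates $F_{t}$ is manifestly a direct sum (the first summand uses only the $y_{i}+tx_{i}$, the second only $\bfy,\bfz$), with both summands nonzero for all but at most one value of $t$; and $F_{t}\to F$ since the difference quotient tends to the directional derivative $\sum_{i}x_{i}\,\partial H/\partial y_{i}$. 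For (1)$\Rightarrow$(3) I would argue by semicontinuity: along a curve $F_{t}\to F$ with $F_{t}$ a direct sum for $t\ne 0$, openness of conciseness makes $F_{t}$ concise for small $t$, so one summand of $F_{t}$ lies in $\Gamma(F_{t})\setminus\kk F_{t}$ and $\dim\Gamma(F_{t})\ge 2$; on the concise locus $\partial F_{t}$ has constant dimension $n$, so the family $S^{d}V\to\Hom(V^{*},S^{d-1}V/\partial F_{t})$, $G\mapsto(\xi\mapsto\xi\circ G)$, is a morphism of vector bundles near $0$ with kernel $\Gamma(F_{t})$, and upper semicontinuity of the kernel dimension gives $\dim\Gamma(F)\ge\dim\Gamma(F_{t})\ge 2$, which is (3).

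It remains to prove (3)$\Rightarrow$(2). Here $C(F)\cong\Gamma(F)$ is a commutative unital $\kk$-algebra of dimension $\ge 2$, hence, over the algebraically closed $\kk$, a finite product of local Artinian rings. If there are at least two factors, the unit $e$ of one of them gives an idempotent $\varphi_{e}\in\End(V^{*})$ with $0\ne\varphi_{e}\ne\id$; splitting $V^{*}=\im\varphi_{e}\oplus\ker\varphi_{e}$ and passing to the dual coordinates $\bfx$ (dual to $\im\varphi_{e}$) and $\bfy$, the relation $\xi\circ G_{e}=\varphi_{e}(\xi)\circ F$ forces $G_{e}$ to involve only $\bfx$, hence $\partial F/\partial x_{i}=\partial G_{e}/\partial x_{i}$ involves only $\bfx$, so $\partial^{2}F/\partial x_{i}\partial y_{j}=0$ for all $i,j$ and $F=F_{1}(\bfx)+F_{2}(\bfy)$ is a direct sum, both parts nonzero by conciseness---the first alternative of (2). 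Otherwise $C(F)$ is local; taking a nonzero element of its maximal ideal and passing to a suitable power produces a square-zero $\nu\ne 0$ in $\End(V^{*})$, equal to $\varphi_{G}$ for some $G\in\Gamma(F)$. Put $k:=\rank\nu\ge 1$; using $\im\nu\subseteq\ker\nu$, choose a basis $\partial_{y_{1}},\dots,\partial_{y_{k}}$ of a complement of $\ker\nu$ in $V^{*}$, set $\partial_{x_{i}}:=\nu(\partial_{y_{i}})$, extend by a basis $\partial_{z_{1}},\dots,\partial_{z_{n-2k}}$ of a complement of $\im\nu$ inside $\ker\nu$, and let $\bfx,\bfy,\bfz$ be the dual coordinates on $V$. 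Then $\xi\circ G=\nu(\xi)\circ F$ yields $\partial_{x_{i}}\circ G=0$, $\partial_{z_{l}}\circ G=0$ and $\partial_{y_{i}}\circ G=\partial F/\partial x_{i}$, so $G=H(\bfy)$ with $\partial H/\partial y_{i}=\partial F/\partial x_{i}$; a one-line check shows $R:=F-\sum_{i}x_{i}\,\partial H/\partial y_{i}$ has all its $\bfx$-partials zero, hence $R=G(\bfy,\bfz)$ and $F$ is in the normal form of (2). This closes the cycle (2)$\Rightarrow$(1)$\Rightarrow$(3)$\Rightarrow$(2).

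The step I expect to be the genuine obstacle is not any single calculation but the structural input behind (3)$\Rightarrow$(2): one must have $C(F)$ honestly at hand as a commutative associative algebra---so that its idempotents feed the first alternative and powers of a nilpotent stay inside it, supplying the square-zero element for the second---and one must arrange the semicontinuity in (1)$\Rightarrow$(3) over a base on which $\partial F_{t}$ has constant dimension, which is exactly why the degenerating family must be restricted to where it stays concise. Once those structural facts are secured, reading off $H,\bfx,\bfy,\bfz$ from a square-zero centroid element and checking that $F-\sum_{i}x_{i}\,\partial H/\partial y_{i}$ drops the variables $\bfx$ are short and purely formal, as is the explicit degeneration for (2)$\Rightarrow$(1).
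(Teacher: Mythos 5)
Your proof is correct, and it runs along the same centroid-based route that the paper itself takes: the space $\Gamma(F)$ of degree-$d$ forms whose first partials lie in $\partial F$ is exactly the space~\eqref{eq:totalSpan}, your identification of it with the centroid acting on $F$ is \autoref{ref:centroidFromAnn:teo}, your Macaulay-duality dimension count reproduces \autoref{ref:totalSpan:lem}, the idempotent-splitting argument is \autoref{prop:initial_case}, and the square-zero normal form is the $n=2$ case of \autoref{cor: simmetrici epsilon}. The only real divergences are in two places, both to your credit. For (2)$\Rightarrow$(1) you give the clean one-parameter degeneration $F_t=\tfrac1t H(\bfy+t\bfx)+\bigl(G(\bfy,\bfz)-\tfrac1t H(\bfy)\bigr)$, which is a more economical special case of the Vandermonde-based construction the paper uses in \autoref{prop:direct_sum_limit} to handle arbitrary $n$ and Segre-Veronese format. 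For (1)$\Rightarrow$(3) you supply an explicit semicontinuity argument over the concise locus; the paper leaves this direction implicit (it is the standard observation that $\dim\cen{F}\geq 2$ cuts out a closed condition on the concise locus), so spelling it out, including the reason the family of kernels is well-behaved only where $\dim\partial F_t$ is constant, is a small but genuine addition. One small point worth flagging: in (2)$\Rightarrow$(1) you correctly note $H\neq 0$ by conciseness; this also guarantees $k\geq1$ (an empty $\bfy$ forces $H=0$ in degree $d\geq1$), which is needed for the form in~\eqref{it:BBKTchar} to be meaningfully restrictive.
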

The above is very satisfactory, since~\eqref{it:BBKTchar:gen} is easy to verify
for a given $F$, while~\eqref{it:BBKTchar} gives a full characterisation of
possible cases. The proof {in~\cite{BBKT_direct_sums}} depends crucially on being in the Veronese
format: it uses the duality of the
minimal free resolution of the apolar algebra $S^d V^{\vee}/\Ann(F)$ and
classification of ideals with certain Betti numbers
nonvanishing~\cite[Theorem~8.18]{eisenbud:syzygies}. To be precise, the
statement says that an ideal with an almost longest possible quadratic strand of
the minimal resolution (with $\beta_{n-1,n}\neq 0$) is contained in a scroll.
It particular, it does not adapt to more than two factors in the case $S^d V$ and completely
breaks for other formats of tensors. This leaves two natural problems.

\begin{problem}\label{problem:formats}
    Generalise \autoref{refintro:BBKT:thm} to arbitrary tensors.
\end{problem}
\begin{problem}\label{problem:morefactors}
    Generalise \autoref{refintro:BBKT:thm} for polynomials to direct sums of more than two factors.
\end{problem}

In this paper, we solve \autoref{problem:formats} completely. The following is
a direct generalisation of \autoref{refintro:BBKT:thm}, yet the proof is
completely different, as the Betti number method used in~\cite{BBKT_direct_sums}
is unavailable for the
general format, as we explained above.
\begin{teo}[\autoref{teo: epsilon}, \autoref{cor: simmetrici epsilon}, $n=2$,
    \autoref{ref:centroidFromAnn:teo}]\label{refintro:CFJ:twoFactors}
    Let $
    T\in S^{d_1} V_1 
    \ot\cdots \ot S^{d_e} V_e$ be a concise (\autoref{ref:concise:def}) tensor. Then the following are equivalent
    \begin{enumerate}
        \item\label{it:CFJcases} $T$ is a direct sum $T_1 + T_2$ or a limit of such,
        \item\label{it:CFJchar} $T$ is a direct sum $T_1 + T_2$ or it has the form
            \begin{equation}\label{eq:limitDirSumTwoTensor}
                T = \sum_{j=1}^e \sum_{i=1}^{k_j} x_{j,i} \frac{\partial
                H}{\partial y_{j,i}} + G,
            \end{equation}
            where for every $j=1, \ldots ,e$, the elements $\bfx_j = (x_{j,1},
            \ldots ,x_{j,k_j})$, $\bfy_j = (y_{j,1}, \ldots ,y_{j,k_j})$,
            $\bfz_j = (z_{1,j}, \ldots ,z_{j,n_j-2k_j})$ form a basis of $V_j$
            and $G, H\in S^{d_1} V_1 \ot S^{d_2} V_2 \ot\cdots \ot S^{d_e}
            V_e$ are such that $G$ involves on $\bfy_{\bullet}$ and
            $\bfz_{\bullet}$ variables, while $H$ involves only
            $\bfy_{\bullet}$ variables.
        \item $\Ann(T)$ has a minimal generator of degree $(d_1, \ldots ,d_e)$.
    \end{enumerate}
\end{teo}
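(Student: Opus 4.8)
Throughout, write $(1)$, $(2)$, $(3)$ for the three conditions of the theorem and set $\bfd:=(d_1,\dots,d_e)$. The plan is to prove the cycle $(2)\Rightarrow(1)\Rightarrow(3)\Rightarrow(2)$. The conceptual core is the \emph{centroid} $\CenOp(T)$ built earlier in the paper, a commutative Artinian $\kk$-algebra attached to every concise $T$, together with two results already established: the generalised Mammana classification \autoref{teo: epsilon} and its Segre--Veronese version \autoref{cor: simmetrici epsilon}, which in the case $n=2$ assert that $T$ is a direct sum of two tensors or a limit of such precisely when $\CenOp(T)\ne\kk$, and that in the second, non--direct-sum, case $T$ has the normal form \eqref{eq:limitDirSumTwoTensor}; and the dictionary \autoref{ref:centroidFromAnn:teo} equating $\CenOp(T)\ne\kk$ with the existence of a minimal generator of $\Ann(T)$ in degree $\bfd$. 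Granting these, only two elementary links remain, which I sketch below; I also find it useful to verify the cheap direction $(1)\Rightarrow(3)$ directly, without the centroid.

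For $(2)\Rightarrow(1)$, there is nothing to do if $T$ is already a direct sum, so assume it has the form \eqref{eq:limitDirSumTwoTensor}. For $t\ne 0$ let $\tau_t\in\prod_j\GL(V_j)$ be the automorphism fixing every $x_{j,i}$ and $z_{j,\cdot}$ and sending $y_{j,i}\mapsto y_{j,i}+tx_{j,i}$ for $1\le i\le k_j$, and put $w_{j,i}:=y_{j,i}+tx_{j,i}$, so that $(\bfw_j,\bfy_j,\bfz_j)$ is again a basis of $V_j$ and $\langle\bfw_j\rangle\oplus\langle\bfy_j,\bfz_j\rangle=V_j$. Since $H$ involves only $\bfy_\bullet$, the tensor $\tau_t(H)$ involves only $\bfw_\bullet$, whereas $G-\tfrac1tH$ involves only $\bfy_\bullet$ and $\bfz_\bullet$; hence
\[
T_t:=\tfrac1t\,\tau_t(H)+\Bigl(G-\tfrac1tH\Bigr)
\]
is, for each $t\ne 0$, a genuine direct sum of tensors living on the complementary coordinate subspaces $\langle\bfw_\bullet\rangle$ and $\langle\bfy_\bullet,\bfz_\bullet\rangle$. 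Since $\tau_t=\id+t\delta+O(t^2)$, with $\delta$ the derivation induced by $y_{j,i}\mapsto x_{j,i}$, we get $T_t=\tfrac1t\bigl(\tau_t(H)-H\bigr)+G\to\sum_{j=1}^e\sum_{i=1}^{k_j}x_{j,i}\frac{\partial H}{\partial y_{j,i}}+G=T$ as $t\to0$, so $T$ is a limit of direct sums.

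For $(1)\Rightarrow(3)$, let $S$ be the multigraded dual ring, with $\bfa$-th graded piece $S^{a_1}V_1^\vee\ootimes S^{a_e}V_e^\vee$, let $\mfm$ be its graded maximal ideal, and write $\e_j$ for the $j$-th unit multidegree. Suppose first $T=T'+T''$ is a genuine direct sum, $V_j=V_j'\oplus V_j''$, with $T'$, $T''$ concise on the corresponding subspaces. Every operator involving, for some $j$, both a coordinate of $(V_j')^\vee$ and one of $(V_j'')^\vee$ annihilates $T'$ and $T''$, so $\Ann(T)$ contains the ideal $J$ they generate. Pick $\Theta'\in\bigotimes_j S^{d_j}(V_j')^\vee$ with $\Theta'\lrcorner T'=1$ and $\Theta''\in\bigotimes_j S^{d_j}(V_j'')^\vee$ with $\Theta''\lrcorner T''=1$; since each has positive degree in every factor, $\Theta'\lrcorner T''=\Theta''\lrcorner T'=0$, so $\Theta:=\Theta'-\Theta''$ lies in $\Ann(T)_{\bfd}$. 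Reducing modulo $J$ one sees that if $\Theta\in(\mfm\Ann(T))_{\bfd}=\sum_j S_{\e_j}\Ann(T)_{\bfd-\e_j}$, then $\Theta'$ would be a sum of products of a $(V_j')^\vee$-coordinate with an element of $\Ann(T')$, hence would annihilate $T'$ — contradicting $\Theta'\lrcorner T'=1$. The same computation shows that a minimal generator of $\Ann(T')$ of degree $\bfd$, if one exists, stays minimal in $\Ann(T)$. Hence genuine direct sums satisfy $(3)$. For a limit of such, observe that on the Zariski-open locus of concise tensors $\dim\Ann(T)_{\bfd}=\dim S_{\bfd}-1$ is constant, while $\dim(\mfm\Ann(T))_{\bfd}$ is the rank of the multiplication $\bigoplus_j S_{\e_j}\otimes\Ann(T)_{\bfd-\e_j}\to S_{\bfd}$, whose source has constant dimension over that locus by conciseness; as the rank is lower semicontinuous, "$\Ann(T)$ has a minimal generator of degree $\bfd$" is a closed condition there, and since conciseness (surjectivity of the contraction maps) is open, it passes to any limit of direct sums lying in that locus. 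This completes $(1)\Rightarrow(3)$.

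The substantive implication is $(3)\Rightarrow(2)$. By \autoref{ref:centroidFromAnn:teo}, a minimal generator of $\Ann(T)$ in degree $\bfd$ forces $\CenOp(T)\ne\kk$; being commutative Artinian, $\CenOp(T)$ is a finite product of local rings. If it has at least two factors, the associated orthogonal idempotents act on $T$ as the projections of a genuine direct-sum splitting, so $T$ is a direct sum and we are done. Otherwise $\CenOp(T)$ is local and carries a nonzero nilpotent $\nu$; invoking the normal-form analysis of \autoref{teo: epsilon} and \autoref{cor: simmetrici epsilon}, one picks on each $V_j$ a basis adapted to $\nu|_{V_j}$ — $\bfx_j$ spanning a complement of $\Ker(\nu|_{V_j})$, $\bfy_j:=\nu(\bfx_j)$, and $\bfz_j$ completing these inside $\Ker(\nu|_{V_j})$ — and the identities defining membership in $\CenOp(T)$ translate exactly into $T$ having the form \eqref{eq:limitDirSumTwoTensor}, with $H$ recovered from the "$\nu$-diagonal block" of $T$. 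I expect this translation to be the main obstacle: one must produce the normal form simultaneously in all $e$ factors while respecting the Segre--Veronese symmetry, and check that the residual freedom $\nu\mapsto\nu-c\,\id$ does no harm. By contrast, the commutative-algebra bookkeeping in $(1)\Rightarrow(3)$ and the one-parameter degeneration in $(2)\Rightarrow(1)$ are routine.
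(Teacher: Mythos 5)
Your proposal is correct and, for the two nontrivial directions, follows the same route as the paper: condition (3) is equated with $\cen{T}\neq\kk$ via \autoref{ref:centroidFromAnn:teo}, and then \autoref{prop:initial_case} (centroid non-local $\Rightarrow$ direct sum) together with \autoref{teo: epsilon}/\autoref{cor: simmetrici epsilon} at $n=2$ (square-zero nilpotent in a local centroid $\Rightarrow$ the normal form \eqref{eq:limitDirSumTwoTensor}) give (2). Two remarks on where you diverge, both to your credit. First, your one-parameter degeneration $\tau_t\colon y_{j,i}\mapsto y_{j,i}+t x_{j,i}$, yielding $T=\lim_{t\to0}\bigl(\tfrac1t\tau_t(H)+(G-\tfrac1tH)\bigr)$ as a limit of genuine two-term direct sums, is the clean $n=2$ specialisation of what \autoref{prop:direct_sum_limit} does in general with Jordan blocks and Vandermonde coefficients; for this theorem it is a nice simplification to have it spelled out. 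Second, for $(1)\Rightarrow(3)$ your rank-semicontinuity of the map $\bigoplus_j S_{\e_j}\otimes\Ann(T)_{\bfd-\e_j}\to S_{\bfd}$ works (and the constancy of $\dim\Ann(T)_{\bfd-\e_j}$ on the concise locus is exactly conciseness in the $j$-th factor), but in light of \autoref{ref:centroidFromAnn:teo} the shorter route the paper has in mind is simply upper semicontinuity of $\dim\cen{T}$ on the concise locus: concise direct sums have $\dim\cen{T}\geq2$, and since the approximating direct sums are eventually concise (conciseness is open and $T$ is concise), the limit also has $\dim\cen{T}\geq2$, which is precisely (3). Your explicit construction of the minimal generator $\Theta'-\Theta''$ in the honest direct-sum case is correct but, again, already subsumed by \autoref{prop:initial_case} and the dimension formula.
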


\subsection{Refinements using centroids}
Before proceeding to our results for \autoref{problem:morefactors}, let us
discuss how to distinguish between the direct sum case and its limits in
\autoref{refintro:BBKT:thm}\eqref{it:BBKTcases} and
\autoref{refintro:CFJ:twoFactors}\eqref{it:CFJcases}. We give a simple and
computationally very effective criterion based on the theory of
\emph{centroids}. Centroids of bilinear maps were introduced
in~\cite{Myasnikov} and further developed for maps coming from groups
in~\cite{Wilson} and generalised to other Segre formats
in~\cite{Brooksbank_Maglione_Wilson}. Centroids were rediscovered under the name of 111-algebras
in~\cite{JLP24}.

For the Veronese case, it is not a priori clear whether the centroid preserves
the symmetry. We show that it does and we define the centroid for the
Segre-Veronese case. This will also appear in~\cite{Jelisiejew_Bedlewo}, but these notes are in
preparation.

Given $T\in S^{d_1} V_1 \ot S^{d_2} V_2
\ot\cdots \ot S^{d_e} V_e$ and $X_j\in \End(V_j)$ for some $1\leq j\leq e$, we
denote by \[X_j\circ_j T\in S^{d_1} V_1 \ootimes (V_j \ot S^{d_j-1} V_j)
\ootimes S^{d_e} V_e\] the tensor resulting from applying $X_j$
on appropriate coordinate, see \autoref{rmk:indexing} and \autoref{example:action}.
The \emph{centroid} of $T$ is the subspace
\begin{equation}\label{eq:centroids}
    \cen{T}=\Set{(X_1, \ldots ,X_e)\in \End(V_1)\times \cdots \times
    \End(V_e)|\begin{aligned}
    &X_i\circ_i T = X_j\circ_j T\\
    &X_j \circ_j T \in S^{d_1} V_1 \ot \cdots \ot S^{d_e} V_e\\
&\forall\, i,j=1,\dots,e
    \end{aligned}
    }.
\end{equation}
If $e\neq 1$, then the latter condition is redundant.
The centroid enjoys the following properties:
\begin{enumerate}
    \item the subspace $\cen{T}$ is a subalgebra of $\End(V_1)\times \cdots
        \times \End(V_e)$ and it is commutative;
    \item the projection of $\cen{T}$ onto a factor $\End(V_j)$ is an
        injective homomorphism of algebras;
    \item the dimension of the centroid satisfies
        \begin{equation}\label{eq:centroidDim}
            \dim \cen{T} = 1 + \mbox{number of minimal generators of $\Ann(T)$
            of degree }(d_1, \ldots ,d_e).
        \end{equation}
\end{enumerate}
The centroid can be computed effectively from definition or, alternatively,
when knowing the ideal $\Ann(T)$, see \autoref{ref:centroidFromAnn:teo}.
\begin{prop}[\autoref{prop:initial_case}, \autoref{ref:centroidFromAnn:teo}]\label{refintro:CFJlocalNaive:prop}
    In the setting of \autoref{refintro:CFJ:twoFactors}, assume that there is
    a single generator of $\Ann(T)$ in degree $(d_1, \ldots ,d_e)$. Then,
    there are two mutually exclusive possibilities
    \begin{enumerate}
        \item the centroid $\cen{T}$ is isomorphic to $\kk\times \kk$ and $T$
            is a direct sum.
        \item the centroid $\cen{T}$ is isomorphic to
           $\kk[\eps]/(\eps^2)$ and $T$ is a limit of direct sums
            as in~\eqref{eq:limitDirSumTwoTensor}.
    \end{enumerate}
\end{prop}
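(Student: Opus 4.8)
The plan is to first pin down $\cen{T}$ as an abstract algebra and then reduce the statement to the single dichotomy: $T$ is a direct sum if and only if $\cen{T}$ contains an idempotent other than $0$ and $1$. By the dimension formula \eqref{eq:centroidDim}, the hypothesis that $\Ann(T)$ has exactly one minimal generator in degree $(d_1,\dots,d_e)$ says $\dim_\kk\cen{T}=2$, and by the first listed property $\cen{T}$ is commutative and unital. Over the algebraically closed field $\kk$, a two-dimensional commutative unital algebra $A$ is, choosing $z\in A\setminus\kk\cdot 1$ and writing $z^2=\alpha z+\beta$, of the form $\kk[t]/(t^2-\alpha t-\beta)$; according to whether the quadratic has two distinct roots or a double one, $A\cong\kk\times\kk$ or $A\cong\kk[\eps]/(\eps^2)$. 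These are non-isomorphic — the first is reduced, the second is not — and $\kk\times\kk$ is precisely the one possessing a nontrivial idempotent; this already yields the mutual exclusivity in the statement, and reduces the proposition to proving that $\cen{T}$ has a nontrivial idempotent exactly when $T$ is a direct sum. (The remaining alternative is then automatic: if $\cen{T}\cong\kk[\eps]/(\eps^2)$ then $T$ is not a direct sum, so, being in the setting of \autoref{refintro:CFJ:twoFactors}, it must have the form \eqref{eq:limitDirSumTwoTensor}.)

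The easy direction is that a direct sum produces a nontrivial idempotent. If $T=T_1+T_2$ with $V_j=V_j^{(1)}\oplus V_j^{(2)}$ and $T_m\in\bigotimes_l S^{d_l}V_l^{(m)}$, then conciseness of $T$ forces $T_1,T_2\neq0$ and hence $V_j^{(m)}\neq0$ for all $j,m$; letting $\bfP_m=(P_1^{(m)},\dots,P_e^{(m)})$ with $P_j^{(m)}$ the projection of $V_j$ onto $V_j^{(m)}$ along $V_j^{(3-m)}$, one checks $P_i^{(m)}\circ_i T=P_j^{(m)}\circ_j T=T_m$ for all $i,j$ and that $T_m\in\bigotimes_l S^{d_l}V_l$, so $\bfP_1,\bfP_2\in\cen{T}$. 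They are orthogonal idempotents summing to $\id$, neither equal to $0$ nor to $\id$, hence nontrivial.

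The substantive direction is the converse: a nontrivial idempotent $\bfE=(E_1,\dots,E_e)\in\cen{T}$ splits $T$. Set $V_j'=\operatorname{im}E_j$, $V_j''=\ker E_j$, so $V_j=V_j'\oplus V_j''$, and put $T'=E_j\circ_j T$ (independent of $j$ by the defining equalities in \eqref{eq:centroids}) and $T''=T-T'=(\id-E_j)\circ_j T$. The key observation is that $T'\in\bigotimes_l S^{d_l}V_l'$: on one hand $E_j\circ_j T$ is symmetric in slot $j$ — this is the clause ``$X_j\circ_j T\in S^{d_1}V_1\ot\cdots\ot S^{d_e}V_e$'' of \eqref{eq:centroids}, which for $e=1$ is an honest constraint and for $e\geq2$ holds automatically; on the other hand, applying $E_j$ in slot $j$ forces all ``front'' tensor factors of the slot-$j$ polarization of $T'$ to lie in $V_j'$. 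A symmetric tensor whose polarization lies in $V_j'\ot V_j^{\ot(d_j-1)}$ lies, by $S_{d_j}$-invariance, in $(V_j')^{\ot d_j}$, hence in $S^{d_j}V_j'$. Running this in each slot gives $T'\in\bigotimes_l S^{d_l}V_l'$; symmetrically $T''\in\bigotimes_l S^{d_l}V_l''$, so $T=T'+T''$ is a direct sum, which is nontrivial because $\bfE\neq0,\id$ forces $V_j',V_j''\neq0$ and hence $T',T''\neq0$. Combining the two directions with the first paragraph: $\cen{T}\cong\kk\times\kk$ iff $T$ is a direct sum, and $\cen{T}\cong\kk[\eps]/(\eps^2)$ iff $T$ is a genuine limit of the form \eqref{eq:limitDirSumTwoTensor}.

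I expect the only delicate point to be the symmetrization step in the last paragraph — identifying a symmetric tensor ``supported on $\operatorname{im}E_j$ in one factor'' with an element of $S^{d_j}V_j'$, and recognizing that what licenses this is exactly the symmetry clause built into the definition \eqref{eq:centroids} of the centroid, so that, remarkably, the Veronese symmetry is respected even though $\circ_j$ a priori breaks it. Everything else is bookkeeping: the dimension count is \eqref{eq:centroidDim}, the classification of two-dimensional algebras is standard, and the direct-sum-to-idempotent direction is immediate. Alternatively, one can bypass the explicit idempotent analysis by reading $\cen{T}$ off $\Ann(T)$ through \autoref{ref:centroidFromAnn:teo}, where with a single top-degree generator the two-dimensional algebra structure can be extracted directly.
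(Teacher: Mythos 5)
Your proof is correct and follows essentially the same route as the paper: extract $\dim_\kk\cen{T}=2$ from the dimension formula (\autoref{ref:centroidFromAnn:teo}), classify two-dimensional commutative unital $\kk$-algebras, and reduce to the equivalence ``$T$ is a direct sum iff $\cen{T}$ has a nontrivial idempotent,'' which is exactly the content of \autoref{prop:initial_case}. The one place where you add genuine value is in spelling out the symmetrization step in the idempotent $\Rightarrow$ direct sum direction: the paper proves \autoref{prop:initial_case} in the desymmetrized Segre picture (with $d=d_1+\cdots+d_e$ factors) and leaves the descent to $S^{d_1}V_{1,j}\ootimes S^{d_e}V_{e,j}$ implicit via \autoref{lemma:simmetriciCompatibili}, whereas you work in the Segre--Veronese format directly and invoke the symmetry clause of \eqref{eq:centroids} (automatic for $e\geq 2$, explicit for $e=1$, as the paper notes) together with the standard fact that a partially symmetric tensor supported on $\im E_j$ in one slot lies in $S^{d_j}(\im E_j)$. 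These are two phrasings of the same argument, so there is no mathematical divergence; yours is slightly more explicit at precisely the point the paper flags as worthwhile (``it is not a priori clear whether the centroid preserves the symmetry'').
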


Once we have a non-scalar element $r\in \cen{T}$, deciding which possibility
holds is easy. We can project $r$ to, for example, $\End(V_1)$ and compute its
minimal polynomial $\chi_r$, which has to have degree two
by~\eqref{eq:centroidDim}. If $\chi_r$ has no multiple roots, then the first possibility
holds. If $\chi_r$ has a double root, then the second possibility holds.

The possibilities in \autoref{refintro:CFJlocalNaive:prop} generalise to the
case without any assumptions on $\Ann(T)$. Namely, we have the following
result. After completing this paper, we learned that for $T$ a bilinear map, a
large part of this result appeared in~\cite[\S6.4]{Wilson}.
\begin{teo}\label{refintro:directSum:thm}
    Let $T\in S^{d_1} V_1 \ot S^{d_2} V_2 \ot\cdots \ot S^{d_e} V_e$ be
    concise and let $\cen{T}$ be its centroid. Let $\mfm_1, \ldots ,\mfm_n$ be
    the maximal ideals of $\cen{T}$. Then we have a direct sum decomposition
    \[
        T = T_1 +  \cdots + T_n,
    \]
    where $T_i$ are nonzero and $\cen{T_i} = (\cen{T})_{\mfm_i}$. This is the most refined direct sum
    decomposition: any other direct sum decomposition of $T$ comes from
    reindexing or grouping together terms of $\{T_1, \ldots ,T_n\}$.
    In particular, $T$ is not a direct sum if and only if $\cen{T}$ is local.
\end{teo}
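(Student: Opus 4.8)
The strategy is to transport the decomposition of the commutative algebra $\cen{T}$ into its local factors, across the projection $\cen{T} \hookrightarrow \End(V_j)$, and show that the resulting idempotents split $T$ in a way that is completely controlled by $\cen{T}$. First I would recall from the properties listed after \eqref{eq:centroids} that $C := \cen{T}$ is a commutative (finite-dimensional, since it embeds in $\prod_j \End(V_j)$) $\kk$-algebra, hence by the structure theorem $C \cong \prod_{i=1}^n C_{\mfm_i}$, and the primitive idempotents $\e_1, \ldots, \e_n \in C$ summing to $1 = (\id_{V_1}, \ldots, \id_{V_e})$ are exactly the components of this product. Write $\e_i = (E_{i,1}, \ldots, E_{i,e})$ with $E_{i,j} \in \End(V_j)$; since $\sum_i E_{i,j} = \id_{V_j}$ and the $\e_i$ are orthogonal idempotents in the commutative algebra $C$, each family $\{E_{i,j}\}_i$ is a complete system of orthogonal idempotents in $\End(V_j)$, giving a direct sum decomposition $V_j = \bigoplus_{i=1}^n V_{i,j}$ where $V_{i,j} = E_{i,j}(V_j)$. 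I would then set $T_i := \e_i \circ_1 T$ (equivalently $\e_i \circ_j T$ for any $j$, these agreeing by the defining relations of the centroid), so that $T = 1 \circ_1 T = \sum_i T_i$.

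The key claim is that $T_i$ lies in $S^{d_1}V_{i,1} \ot \cdots \ot S^{d_e}V_{i,e}$, i.e. it only involves the $i$-th block of variables in every tensor factor, which is precisely what "direct sum" means. For this I would use that $\e_i$ is an idempotent in the centroid: applying $\e_i$ on factor $j$ and then $\e_{i'}$ on factor $j'$ with $i \neq i'$ gives $\e_{i'} \circ_{j'} (\e_i \circ_j T) = \e_i \circ_j (\e_{i'} \circ_{j'} T)$ by the centroid relations (the operators on different factors commute, and within the centroid $\e_i \e_{i'} = 0$), so this vanishes; applying $\e_i \circ_j$ gives back $T_i$ itself (since $\e_i^2 = \e_i$). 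Concretely, in coordinates adapted to $V_j = \bigoplus_i V_{i,j}$, the tensor $T_i = \e_i \circ_j T$ is annihilated by the projection onto any summand $V_{i',j}$ with $i' \neq i$, in every slot $j$; hence $T_i \in \bigotimes_j S^{d_j} V_{i,j}$. This simultaneously shows the $T_i$ are in pairwise disjoint sets of variables, so $T = \sum_i T_i$ is genuinely a direct sum decomposition, and each $T_i$ is nonzero because $\e_i \neq 0$ and the projection $C \to \End(V_j)$ is injective (property (2)), so $E_{i,j} \neq 0$, forcing $V_{i,j} \neq 0$; conciseness of $T$ then forces $T_i$ to be concise on $\bigotimes_j V_{i,j}$, in particular nonzero.

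Next I would identify $\cen{T_i}$. Because $T_i$ is concise on the subspaces $V_{i,j}$ and lives in disjoint variables from the other summands, there is a natural identification $\End(V_{i,j}) \cong E_{i,j}\End(V_j)E_{i,j}$, and an element $(X_j)_j$ centralizes $T_i$ iff it does so after extending by zero on the other blocks; this matches the localization $(C)_{\mfm_i} = \e_i C$ under $C \cong \prod_i \e_i C$. More carefully: the inclusion $\cen{T} \supseteq \bigoplus_i \cen{T_i}$ holds because an element of $\cen{T_i}$, extended by zero, visibly satisfies the centroid relations for $T = \sum T_i$ (the summands are in disjoint variables); conversely, conjugating any $(X_j) \in \cen{T}$ by the idempotents $\e_i$ — which lie in $\cen{T}$ and hence commute with $(X_j)$ inside the commutative algebra $C$ — shows $(X_j) = \sum_i \e_i (X_j)$ with $\e_i(X_j) \in \cen{T_i}$. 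Thus $\cen{T} = \bigoplus_i \cen{T_i}$ as algebras, and comparing with $C = \prod_i C_{\mfm_i}$ (where $C_{\mfm_i}$ is local, hence indecomposable) gives $\cen{T_i} = C_{\mfm_i} = (\cen{T})_{\mfm_i}$; in particular each $\cen{T_i}$ is local.

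Finally, for the "most refined" assertion: any direct sum decomposition $T = S_1 + \cdots + S_m$ in disjoint variables induces, by the same argument run in reverse, a decomposition $\cen{T} = \bigoplus_\ell \cen{S_\ell}$ into subalgebras, each $\cen{S_\ell}$ being a product of some of the local rings $C_{\mfm_i}$ (since a decomposition of a commutative Artinian ring is always a coarsening of the decomposition into local factors); tracking which blocks of variables the corresponding idempotents cut out shows that each $S_\ell$ is the sum of the corresponding subfamily of $\{T_1, \ldots, T_n\}$, possibly after reindexing. The statement "$T$ is not a direct sum iff $\cen{T}$ is local" is the special case $n = 1$. The main obstacle I anticipate is the bookkeeping in the key claim: verifying rigorously that $\e_i \circ_j T$ is independent of $j$ and that it really lies in the sub-tensor-space $\bigotimes_j S^{d_j}V_{i,j}$ requires carefully unwinding the definition of $\circ_j$ on the symmetric slots and checking that the idempotent relations in the centroid do the job on all slots simultaneously — in particular that the condition $X_j \circ_j T \in S^{d_1}V_1 \ot \cdots \ot S^{d_e}V_e$ in \eqref{eq:centroids} (needed when $e = 1$) is preserved. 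Everything else is standard commutative-algebra structure theory and should be routine once that claim is in place.
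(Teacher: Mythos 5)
Your proposal is correct and follows essentially the same route as the paper: CRT decomposition of $\cen{T}$, primitive idempotents $f_i$ splitting each $V_j$ and $T$, the centroid relations forcing $T_i$ into $S^{d_1}V_{i,1}\ot\cdots\ot S^{d_e}V_{i,e}$, and the ``most refined'' claim via \autoref{prop:initial_case} together with \autoref{ref:idempotents:cor}. The bookkeeping you flag at the end (independence of $\e_i\circ_j T$ from $j$, landing in the symmetric subspace) is exactly the content of $\e_i\in\cen{T}$ and \autoref{lemma:simmetriciCompatibili}, so it presents no genuine obstacle.
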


\autoref{refintro:directSum:thm} implies that the case of local $\cen{T}$ is
the most interesting one. Indeed, this is also the hardest part of the analysis.

\subsection{General case}
Let us now generalise \autoref{refintro:CFJlocalNaive:prop} to the situation
without any assumption on the generators of $\Ann(T)$. Suppose that
$\cen{T}\neq \kk$ and take a non-scalar element $r\in \cen{T}$. We thus tackle
the following problem.

\begin{problem}
    Classify pairs: a tensor $T$ and an element $r\in \cen{T}$.
\end{problem}

We solve it completely. We also prove that the resulting pairs $(T, r)$ are
limits of direct sums with $n$ summands, where $n$ is the degree of the
minimal polynomial of $r$ where, by degree of the minimal polynomial of $r$, we mean the degree of the minimal polynomial of any coordinate of $r$. Indeed, the degree is the same for any factor and depends only on the structure of $\cen{T}$. Note also that the case $n=2$ is Mammana's classification given in
\autoref{refintro:BBKT:thm}.

The element $r$ can have multiple eigenvalues, if so,
\autoref{refintro:directSum:thm} yields a direct sum (see
\autoref{refintro:CFJ:higherPoly} below for precise description). This means
that the most interesting case is the local situation, where $r$ is nilpotent.
This is the content of the following \autoref{refintro:CFJ:higherPolyLocal}
which is our main technical result. We formulate it here only in the Veronese
case to increase clarity. The Segre case is given in \autoref{teo: epsilon}. The Segre-Veronese case follows easily from these.


\begin{prop}[\autoref{cor: simmetrici epsilon},
    \autoref{prop:direct_sum_limit}]\label{refintro:CFJ:higherPolyLocal}
    Let $F\in S^d V$ be concise (see \autoref{ref:concise:def}) and let $n\geq
    1$ be a positive integer. The following are equivalent:
    \begin{enumerate}
        \item the centroid $\cen{F}$ contains a subalgebra isomorphic to {$\kk[\eps]/(\eps^n)$};
        \item there is a decomposition \[
    V = \bigoplus_{0\leq
            r< q\leq n}V^{(q, r)}
            \]
            of vector spaces and fixed isomorphisms
            \[
            V^{(q, q-1)}\to V^{(q,q-2)} \to \cdots \to V^{(q, 0)},
            \]
            such that
            there exist
            homogeneous polynomials
            $F_1, \ldots ,F_n$, 
            where 
            \[
            F_k\in S^d \biggl(\bigoplus_{k\leq q\leq
            n} V^{(q,q-1)}\biggr),
            \] 
            such that
            \begin{equation}\label{eq:summation}
                F = \sum_{k=1}^n \sum_{\nu_1 + 2\nu_2+ \cdots + (n-1)\nu_{n-1} = k-1}
                \frac{1}{\nu_1! \nu_2! \cdots \nu_{n-1}!}D_1^{\nu_1}
                D_2^{\nu_2}
                \cdots D_{n-1}^{\nu_{n-1}} \hook F_k,
            \end{equation}
            where $D_i$ is the differential operator that is induced by the
            map $V^{(q, q-1)}\to V^{(q, q-1-i)}$, see~\eqref{eq:opAction}; in particular,
                for $k=1$, we
            obtain simply $F_k = F_1$.
    \end{enumerate}
    Moreover, if these hold then $F$ is a limit of direct sums of the form
    $F^{(1)} + \cdots + F^{(n)}$, that is, with $n$ summands.
\end{prop}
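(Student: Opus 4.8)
The plan is to prove the equivalence (1)$\Leftrightarrow$(2) by translating a nilpotent element $r\in\cen F$ into combinatorial-linear data on $V$, and then to deduce the ``limit of direct sums'' conclusion by an explicit one-parameter family. First I would set up the dictionary: if $\cen F$ contains a copy of $\kk[\eps]/(\eps^n)$, pick the element $\eps$ and its image $N\in\End(V)$ under the projection to $\End(V)$, which by the injectivity property of the centroid (property (2) of the centroid) is a genuine nilpotent endomorphism with $N^n=0$, $N^{n-1}\neq 0$. The key structural input is that $N$ lies in the centroid, i.e.\ $N\circ F\in S^dV$ and the various ``insertion'' conditions hold; unwinding~\eqref{eq:centroids} in the Veronese case says precisely that the differential operator attached to $N$ annihilates $F$ up to the symmetry constraint. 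I would then choose a Jordan-type basis for $N$: decompose $V$ into $N$-cyclic strings, which gives exactly the block shape $V=\bigoplus_{0\le r<q\le n}V^{(q,r)}$, where $q$ records the length of the string and $r$ the position within it, and $N$ maps $V^{(q,r)}\to V^{(q,r-1)}$ isomorphically for $r\ge 1$ and kills $V^{(q,0)}$. This produces the fixed isomorphisms $V^{(q,q-1)}\to\cdots\to V^{(q,0)}$ and the operators $D_i$ induced by the $i$-fold composite $V^{(q,q-1)}\to V^{(q,q-1-i)}$ via~\eqref{eq:opAction}.

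Next I would extract the formula~\eqref{eq:summation}. The condition ``$N$ is in the centroid of $F$'' should be reformulated as: the contraction action of the dual of $N$ (equivalently, the differential operator $\mathcal D_N$) on $F$ is ``upper triangular'' with respect to the string-grading, so that $F$ is built inductively from its pieces. Concretely I would grade $S^dV$ by the string-position degree and argue that $F$ decomposes as $F=\sum_{k=1}^n F_k^{\mathrm{corr}}$, where the leading piece $F_k$ (living in $S^d$ of the ``top-of-string'' part $\bigoplus_{k\le q\le n}V^{(q,q-1)}$) determines all lower-position corrections by repeatedly applying the nilpotent operators $D_i$; the multinomial coefficients $\tfrac{1}{\nu_1!\cdots\nu_{n-1}!}$ and the constraint $\nu_1+2\nu_2+\cdots+(n-1)\nu_{n-1}=k-1$ come out of exponentiating/dividing-powers the operator $\sum_i D_i$ to ``descend'' by a total of $k-1$ positions — this is the standard $\exp$ of a locally nilpotent derivation bookkeeping. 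The converse direction (2)$\Rightarrow$(1) is the easy one: given the data and the formula, I would simply define $N\in\End(V)$ by the string maps $V^{(q,r)}\to V^{(q,r-1)}$, check directly from~\eqref{eq:summation} (a term-by-term computation using that $N$ shifts the $F_k$-expansion) that $(N,\dots,N$-induced data$)$ lies in $\cen F$, and that $1,N,\dots,N^{n-1}$ are independent, giving the subalgebra $\kk[\eps]/(\eps^n)$.

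For the final ``moreover'' claim that such $F$ is a limit of direct sums with $n$ summands, I would produce an explicit degeneration: conjugate $N$ by the torus $t\mapsto\diag(t^{?})$ adapted to the string-grading, or equivalently rescale the coordinates within each string by powers of $t$, so that as $t\to 0$ the mixed ``off-diagonal'' terms $D_i^{\nu_i}\hook F_k$ with $i\ge 1$ vanish and $F$ degenerates to $F_1+F_2^{\mathrm{top}}+\cdots+F_n^{\mathrm{top}}$ in the separated string-slot variables, which is visibly a direct sum of $n$ nonzero tensors (nonzero by conciseness, as each $F_k$ must be nonzero for $F$ to involve all strings). Running the family backwards realizes $F$ as a limit of direct sums with $n$ summands. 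I expect the main obstacle to be the forward direction's bookkeeping: showing that the centroid condition really forces the precise multinomial form~\eqref{eq:summation} — that is, that the ``descent data'' from the top pieces $F_k$ via the $D_i$ reconstructs $F$ with exactly those coefficients and no further constraints — which requires carefully organizing the grading by string position, identifying which compositions $D_1^{\nu_1}\cdots D_{n-1}^{\nu_{n-1}}$ act the same way, and verifying the combinatorial identity matching the count of ``descent paths'' of total length $k-1$ to the stated sum; here I would lean on \autoref{teo: epsilon} (the Segre analogue) applied to a Veronese embedding and the symmetry-preservation of the centroid to reduce to a more symmetric computation.
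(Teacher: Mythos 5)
Your outline of the equivalence $(1)\Leftrightarrow(2)$ is consistent with the paper's strategy: reduce to the Segre case via the symmetry of the centroid (\autoref{lemma:simmetriciCompatibili}), pick a Jordan string decomposition of the nilpotent centroid element, and build $F$ inductively from the top pieces; this is in essence the inductive proof of \autoref{teo: epsilon} followed by \autoref{cor: simmetrici epsilon}. The sketch is vague about how the centroid condition actually forces the multinomial coefficients (the paper's mechanism is the recursive step $T'_n := L^{n-1}\circ T$, subtract the full expansion $T'$ of $T'_n$, descend nilpotency), but the idea is in the right place.

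The ``moreover'' part, however, contains a genuine error. Your proposed torus degeneration kills the off-diagonal terms $D_i^{\nu_i}\hook F_k$ as $t\to 0$, so it produces a family $g_t\cdot F$ whose \emph{limit} is a direct sum. That establishes that a direct sum lies in the orbit closure $\overline{\mathrm{GL}\cdot F}$, which is the opposite of what is needed: ``$F$ is a limit of direct sums'' requires a family $F_t$ of \emph{actual} direct sums with $F_t\to F$. Orbit closure is not symmetric and ``running the family backwards'' does not repair this. Worse, even the wrong-direction degeneration would not land on a direct sum: the leading pieces $F_1,\dots,F_n$ all live in $S^d$ of the overlapping subspaces $\bigoplus_{q\ge k}V^{(q,q-1)}$, so $F_1+\cdots+F_n$ is \emph{not} in disjoint variables — for $n=2$ you would get $G(\mathbf y,\mathbf z)+H(\mathbf y)$, which shares $\mathbf y$. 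The paper's actual device (\autoref{prop:direct_sum_limit}) is more delicate: it chooses distinct scalars $\omega_1,\dots,\omega_n$, solves a Vandermonde system for coefficients $\alpha_{k,j}$, and forms the tensors $T^{(j)}(\lambda)$ living in the \emph{tilted} subspaces $\widetilde V_i^{(j)}(\lambda)=\widetilde M_{i,j}(\lambda)(V_i^{(n,n-1)}\oplus\cdots\oplus V_i^{(j,j-1)})$, which are pairwise complementary for $\lambda\neq 0$ precisely because the $\omega_j$ are distinct (Vandermonde non-vanishing). The direct sums $\sum_j T^{(j)}(\lambda)$ are genuine direct sums for $\lambda\neq 0$, and the Vandermonde choice of the $\alpha_{k,j}$ ensures they converge, after rescaling by $\lambda^{n-1}$, to the target $F$. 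Finally, your remark ``each $F_k$ must be nonzero for $F$ to involve all strings'' is unnecessary for the paper's argument and is not in general true; what is actually used is that $F_n\neq 0$ (forced by conciseness, via the $V^{(n,0)}$-component) together with $\alpha_{n,j}\neq 0$ for every $j$.
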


This yields the case without any assumptions on $r$ as follows.
\begin{teo}[\autoref{refintro:directSum:thm}, \autoref{cor: simmetrici
    epsilon}, \autoref{prop:direct_sum_limit}]\label{refintro:CFJ:higherPoly}
    Let $F\in S^d V$ be concise (see \autoref{ref:concise:def}) and $n\geq 1$. Then the following are equivalent:
    \begin{enumerate}
        \item the centroid $\cen{F}$ contains an element $r$ whose minimal
            polynomial has degree $n$;
        \item\label{it:CFJcasesHigher} $F$ is a direct sum $F_{n_1} +  \cdots + F_{n_s}$, for some
            $s\geq 1$, where $n_1
            + \cdots + n_s = n$ and every summand $F_{n_i}$ has the
            form~\eqref{eq:summation} {with $n$ replaced by $n_i$}. (When $n_1 = \cdots = n_s
            = 1$, this just means that $F$ is a direct sum $F_1+ \cdots +
            F_n$).
    \end{enumerate}
    Moreover, if these hold then $F$ is a limit of direct sums of the form
    $F^{(1)} + \cdots + F^{(n)}$, that is, with $n$ summands.
    Observe that~\eqref{it:CFJcasesHigher} describes all the cases,
    there are no limits involved.
\end{teo}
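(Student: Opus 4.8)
The plan is to reduce \autoref{refintro:CFJ:higherPoly} to the ``local'' classification \autoref{refintro:CFJ:higherPolyLocal} together with the direct-sum decomposition theorem \autoref{refintro:directSum:thm}. The structural skeleton is: a non-scalar element $r$ of $\cen{F}$ with minimal polynomial of degree $n$ decomposes, via the primary decomposition of the commutative algebra $\kk[r]\subseteq\cen{F}$, into generalized eigenspaces; the eigenspace decomposition of $r$ induces a direct sum decomposition of $F$; and on each summand $r$ acts with a single eigenvalue, hence (after subtracting the scalar) nilpotently, so the local result applies with $n$ replaced by the size $n_i$ of that block. Conversely, if $F$ has such a direct sum form, I assemble an element of $\cen{F}$ block-diagonally, using that the centroid of a direct sum is the product of centroids (from \autoref{refintro:directSum:thm}, since $\cen{F_i}=(\cen F)_{\mfm_i}$), and that each form~\eqref{eq:summation} produces, via \autoref{refintro:CFJ:higherPolyLocal}, a copy of $\kk[\eps]/(\eps^{n_i})$ inside $\cen{F_{n_i}}$, whose generators together with the scalars give an element whose minimal polynomial has degree $\max_i n_i$ — so to hit degree exactly $n$ I need $n=\max_i n_i$ in the relevant reading, or rather I take $r$ to have a Jordan block of size $n_i$ on each summand with distinct eigenvalues on distinct summands, making the minimal polynomial $\prod_i(t-\lambda_i)^{n_i}$ of degree $n$.

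First I would make precise the passage ``element of $\cen F$ with minimal polynomial of degree $n$'' $\Longrightarrow$ ``decomposition''. Let $\chi_r(t)=\prod_{i=1}^s (t-\lambda_i)^{n_i}$ be the minimal polynomial, with the $\lambda_i$ distinct and $\sum n_i=n$. Projecting $r$ to any $\End(V_j)$ is injective on $\kk[r]$ (property (2) of the centroid), so each coordinate of $r$ has this same minimal polynomial, and $V_j=\bigoplus_i V_j^{(i)}$ with $V_j^{(i)}=\ker((r_j-\lambda_i)^{n_i})$. Because $r\in\cen F$ commutes with the multiplication structure, this decomposition is compatible across the tensor factors and yields $F=F_1+\cdots+F_s$ with $F_i$ supported on $\bigotimes_j S^{d_j}V_j^{(i)}$; this is exactly a direct sum decomposition of tensors. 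Here I would invoke \autoref{refintro:directSum:thm} to identify $\cen{F_i}=(\cen F)_{\mfm_i}$ and in particular to see that $r$ restricts to an element $r_i\in\cen{F_i}$ with minimal polynomial $(t-\lambda_i)^{n_i}$; then $r_i-\lambda_i\,\Id$ is a nilpotent element of $\cen{F_i}$ generating a subalgebra isomorphic to $\kk[\eps]/(\eps^{n_i})$, so \autoref{refintro:CFJ:higherPolyLocal} (in the Veronese case; the Segre-Veronese case is the cited \autoref{teo: epsilon} / \autoref{cor: simmetrici epsilon}) gives that $F_i=F_{n_i}$ has the form~\eqref{eq:summation} with $n_i$ in place of $n$. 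This establishes $(1)\Rightarrow$~\eqref{it:CFJcasesHigher}.

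For the converse, given $F=F_{n_1}+\cdots+F_{n_s}$ with each $F_{n_i}$ of the form~\eqref{eq:summation} for $n_i$, apply the ``$(2)\Rightarrow(1)$'' direction of \autoref{refintro:CFJ:higherPolyLocal} to each $F_{n_i}$: it produces a subalgebra $\kk[\eps_i]/(\eps_i^{n_i})\hookrightarrow \cen{F_{n_i}}$, i.e.\ a nilpotent $N_i\in\End(V^{(i)})$ (Jordan-type, from the chain of isomorphisms $V^{(q,q-1)}\to\cdots\to V^{(q,0)}$) lying in the centroid of the $i$-th summand. Since the $F_{n_i}$ live in disjoint variables, \autoref{refintro:directSum:thm} gives $\cen F=\prod_i\cen{F_{n_i}}$, so $r:=(\lambda_1\Id+N_1,\ldots,\lambda_s\Id+N_s)$ — with $\lambda_i$ chosen pairwise distinct — is a genuine element of $\cen F$, and its minimal polynomial is $\prod_i(t-\lambda_i)^{n_i}$, of degree $n$. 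This gives~\eqref{it:CFJcasesHigher}$\Rightarrow(1)$. Finally, for the ``moreover'' clause: each $F_{n_i}$ is a limit of direct sums with $n_i$ summands by the last sentence of \autoref{refintro:CFJ:higherPolyLocal}, and a direct sum of limits (in disjoint variables) is a limit of the sums, so $F$ is a limit of direct sums with $n_1+\cdots+n_s=n$ summands; conversely~\eqref{it:CFJcasesHigher} is manifestly a concrete description with no remaining limits, as asserted.

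The main obstacle I anticipate is the compatibility claim: verifying carefully that an element $r\in\cen F$ with prescribed generalized eigenspace decomposition of one (equivalently every) coordinate really forces $F$ to split as a tensor direct sum along $\bigoplus_i V_j^{(i)}$, and that the pieces are nonzero and concise. This amounts to chasing the defining relations $X_i\circ_i T=X_j\circ_j T$ through the idempotents of $\kk[r]$ — these idempotents are polynomials in $r$, hence again central, and applying the idempotent $\pi_i\in\cen F$ (in each factor) to $F$ extracts $F_i$; conciseness of $F$ guarantees each $F_i\neq 0$ and concise on its own span. Most of this is presumably already packaged in the proof of \autoref{refintro:directSum:thm}, so in the write-up I would lean on that theorem as a black box and spend the effort only on matching the nilpotent/Jordan data of $r$ on each block with the hypothesis of \autoref{refintro:CFJ:higherPolyLocal}, which is the genuinely content-bearing step.
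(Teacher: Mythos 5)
Your proposal is correct and follows essentially the same route the paper takes, namely reducing to \autoref{refintro:directSum:thm} (split off eigenvalues of $r$ into a direct sum) and \autoref{refintro:CFJ:higherPolyLocal} (classify the nilpotent residue on each summand), which is exactly what the bracketed citations on the theorem indicate. One small imprecision: when you write $\cen{F_i}=(\cen F)_{\mfm_i}$, you should note that the decomposition by eigenvalues of $r$ may be coarser than the one by maximal ideals of $\cen F$, so $\cen{F_i}$ is in general a product of several localizations $(\cen F)_{\mfm_j}$; this does not disturb the rest, since all you actually use is that $r_i-\lambda_i\Id$ is a nilpotent element of $\cen{F_i}$ of index $n_i$.
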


\begin{example}
    Suppose that $n = 2$. The decomposition from
    \autoref{refintro:CFJ:higherPoly} reduces to the above. Namely, we have $V
    = V^{(1, 0)} \oplus V^{(2,0)} \oplus V^{(2, 1)}$. Fix bases $\bfz$ of
    $V^{(1, 0)}$, $\bfy = (y_1, \ldots ,y_k)$ of $V^{(2, 1)}$ and $\bfx =
    (x_1, \ldots ,x_k)$ of $V^{(2, 0)}$. There is
    only one operator {$$D_1 = \sum_{i=1}^{k} x_i\frac{\partial}{\partial y_i}$$.}
    Let $G := F_1$ and $H := F_2$.
    The expression~\eqref{eq:summation} becomes
    \[
        F_1 + D_1 \circ F_2 = G(\bfy, \bfz) + \sum_{i=1}^k
        x_i\frac{\partial H(\bfy)}{\partial y_i}
    \]
    which agrees with \autoref{refintro:BBKT:thm}.
\end{example}

\begin{example}
    Take $n=3$. We have $6$ direct summands, which can be arranged in the
    diagram
    \[
        \begin{tikzcd}
            V^{(3, 0)} \\
            V^{(3, 1)}\ar[u, "\simeq"] & V^{(2, 0)}\\
            V^{(3, 2)}\ar[u, "\simeq"] & V^{(2, 1)}\ar[u, "\simeq"] & V^{(1, 0)}\\
        \end{tikzcd}
    \]
    Let $\bfz$, $\bfy$, $\bfx$, $\bfw$, $\bfv$, $\bfu$ be bases of $V^{(1, 0)}$, $V^{(2, 1)}$,
    $V^{(2, 0)}$, $V^{(3, 2)}$, $V^{(3, 1)}$, $V^{(3, 0)}$, respectively, let
    $k = \dim V^{(2, 1)} = \dim V^{(2, 0)}$ and $l = \dim V^{(3, 2)} = \dim
    V^{(3,1)} = \dim V^{(3, 0)}$. The
    expression~\eqref{eq:summation} becomes
    \begin{equation}\label{eq:expansionThree}
        F_1(\bfw, \bfy, \bfz) + \sum_{i=1}^k x_i\frac{\partial F_2(\bfw,
        \bfy)}{\partial y_i} + \sum_{i=1}^{l} v_i\frac{\partial F_2(\bfw,
        \bfy)}{\partial w_i} + \sum_{i=1}^{l} u_i\frac{\partial
        F_3(\bfw)}{\partial w_i} + \frac{1}{2}\left( \sum_{i=1}^l v_i\frac{\partial}{\partial w_i}
        \right)^2\circ F_3(\bfw)
    \end{equation}
    To get a concrete feeling for this expression, suppose now
    that \[
    V^{(2, 1)} = 0,\quad V^{(1, 0)} = 0,\quad V^{(3, 2)} = \kk w.
    \]
    Equation~\eqref{eq:expansionThree} becomes ${3}(w^2u + wv^2)$, which is
    isomorphic to multiplication tensor in $\kk[\eps]/(\eps^3)$.
\end{example}
\autoref{refintro:CFJ:higherPoly} solves \autoref{problem:morefactors} under
an additional assumption. As we explain in \cref{ssec:secants}, this
innocently looking problem is unsolvable in general, because it amounts to
characterising the whole boundary of the secant variety.

This solution to \autoref{problem:morefactors} generalises to other formats of
tensors, the Segre case is described in \autoref{teo: epsilon}.

\subsection{Fibers of the gradient map}

    \newcommand{\concisePolys}{\mathcal{C}onc_d}
    Consider 
    forms in $S^d V = \bbC[x_1,\ldots,x_n]_d$. To
    such a polynomial $F$, its \emph{gradient map} assigns the linear subspace
    \[
        \nabla F \coloneqq \left\langle \frac{\partial F}{\partial x_1},
        \ldots, \frac{\partial F}{\partial x_n}\right\rangle \subseteq
        S^{d-1} V
    \]
    This subspace is $n$-dimensional precisely when $F$ is
    concise (see \autoref{ref:concise:def}). Let $\concisePolys \subseteq
    S^d V$ denote the open subset consisting of concise polynomials. The gradient yields a morphism
    \[
        \nabla\colon \concisePolys\to \Gr(n, S^{d-1}V)
    \]
    which is generically one-to-one. The geometry of the gradient map is
    tightly connected to geometry of the hypersurface $F$,
    see~\cite{Ueda_Yoshinaga, Wang, Wang_err, Hwang}. It is also connected to
    the geometry of the Hessian map, see for
    example~\cite{Ciliberto_Ottaviani}.

    Concise polynomials $F$ for which
    $\nabla^{-1}(\nabla(F))$ is not a point, have been recently described by
    Hwang~\cite{Hwang}. He seems to be unaware
    of~\cite{Mammana, BBKT_direct_sums}, but his result perfectly fits into the picture
    and we can formulate it as follows.
    \begin{teo}[{\cite[Theorems~1.3-1.4]{Hwang} see also \cite{Mammana}}]\label{refintro:Hwang:thm}
        A concise polynomial $F$ has positive-dimensional
        $\nabla^{-1}(\nabla(F))$ if and only if it satisfies the
        condition \autoref{refintro:BBKT:thm}\eqref{it:BBKTchar}.
    \end{teo}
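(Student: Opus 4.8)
The plan is to compute $\dim\nabla^{-1}(\nabla F)$ exactly and then read off the dichotomy from \autoref{refintro:BBKT:thm}. First I would note that every $F'\in\nabla^{-1}(\nabla F)$ is by definition concise, so $\dim\nabla F' = n = \dim\nabla F$; hence $\nabla F' = \nabla F$ is equivalent to the a priori weaker containment $\nabla F'\subseteq\nabla F$, and therefore
\[
\nabla^{-1}(\nabla F) = L_F\cap\concisePolys,\qquad L_F:=\{F'\in S^d V : \partial F'/\partial x_i\in\nabla F\ \text{ for all }i\}.
\]
Crucially $L_F$ is a \emph{linear} subspace of $S^d V$, hence irreducible, and it contains the concise polynomial $F$; since conciseness is an open condition, $\nabla^{-1}(\nabla F)$ is a nonempty open dense subset of $L_F$, so $\dim\nabla^{-1}(\nabla F) = \dim L_F$.

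Next I would compute $\dim L_F$ by Macaulay duality. With $R = S^\bullet V^\vee$ acting on $S^\bullet V$ by contraction $\hook$, the catalecticant $c_F\colon R_{d-1}\to V$, $\eta\mapsto\eta\hook F$, has kernel $\Ann(F)_{d-1}$ and is the transpose of the map $V^\vee\to S^{d-1}V$, $\xi\mapsto\xi\hook F$, whose image is $\nabla F$. Passing to perpendiculars, $\nabla F'\subseteq\nabla F$ is equivalent to $\Ann(F)_{d-1}\subseteq\Ann(F')_{d-1}$, i.e.\ to $\eta\hook F' = 0$ in $V$ for every $\eta\in\Ann(F)_{d-1}$, i.e.\ to $(\ell\eta)\hook F' = 0$ for all $\ell\in V^\vee$ and $\eta\in\Ann(F)_{d-1}$. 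Hence $L_F = \bigl(V^\vee\cdot\Ann(F)_{d-1}\bigr)^{\perp}\subseteq S^d V$, and since $\Ann(F)_d$ is a hyperplane in $S^d V^\vee$ containing $V^\vee\cdot\Ann(F)_{d-1}$,
\[
\dim\nabla^{-1}(\nabla F) = \dim L_F = 1 + \Bigl(\dim\Ann(F)_d - \dim\bigl(V^\vee\cdot\Ann(F)_{d-1}\bigr)\Bigr) = 1 + \#\{\text{minimal generators of }\Ann(F)\text{ in degree }d\}.
\]
This also matches~\eqref{eq:centroidDim}, i.e.\ equals $\dim\cen F$; in fact one can identify $\nabla^{-1}(\nabla F)$ with the locus of invertible elements of $\cen F$ via $X\mapsto X\circ_1 F$, but the computation above sidesteps $\circ_1$ entirely.

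To finish, observe that $\nabla^{-1}(\nabla F)$ always contains the punctured line $\kk^{\times}F$, so it is never empty and never zero-dimensional; here ``positive-dimensional'' is to be read as strictly larger than this scaling line, equivalently $\dim\geq 2$ (equivalently, the fiber at $[F]$ of the induced map $\pp(\concisePolys)\to\Gr(n,S^{d-1}V)$ is positive-dimensional). By the formula just obtained, $\dim\nabla^{-1}(\nabla F)\geq 2$ if and only if $\Ann(F)$ has a minimal generator in degree $d$, which by \autoref{refintro:BBKT:thm} (equivalence of~\eqref{it:BBKTchar} and~\eqref{it:BBKTchar:gen}) holds exactly when $F$ satisfies \autoref{refintro:BBKT:thm}\eqref{it:BBKTchar}. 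This is the asserted equivalence.

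The only genuinely delicate step is the Macaulay-duality identification $L_F = (V^\vee\cdot\Ann(F)_{d-1})^{\perp}$, where one must carefully track the catalecticant transposes and the contraction pairing; everything else — the reduction to the linear space $L_F$, the density of conciseness, the minimal-generator count, and the appeal to \autoref{refintro:BBKT:thm} — is routine. As indicated, the whole argument can equivalently be phrased through the centroid $\cen F$ together with~\eqref{eq:centroidDim}, which is the viewpoint adopted throughout the rest of the paper.
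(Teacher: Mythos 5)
Your proof is correct, and it is genuinely a proof, whereas the paper leaves this statement to the references: \autoref{refintro:Hwang:thm} is stated as a citation to Hwang and Mammana, and the Corollary following it carries no written proof (implicitly it is meant to follow from \autoref{ref:centroidFromAnn:teo}). Your chain of reductions --- $\nabla^{-1}(\nabla F)$ equals the linear space $L_F$ intersected with the open locus of concise forms, so by irreducibility of $L_F$ and openness of conciseness $\dim\nabla^{-1}(\nabla F)=\dim L_F$, and then $L_F = (V^\vee\cdot\Ann(F)_{d-1})^\perp$ by a careful transpose of the catalecticant --- is carried out correctly, and the final count $\dim L_F = 1 + s$ with $s$ the number of degree-$d$ minimal generators of $\Ann(F)$ is exactly \autoref{ref:totalSpan:lem}: indeed your $L_F$ is the space~\eqref{eq:totalSpan} specialized to $e=1$, so you have essentially re-derived that lemma (and, via the identification $L_F=\cen{F}\circ F$, the Veronese case of \autoref{ref:centroidFromAnn:teo}) with the gradient map rather than the centroid as the motivating object. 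Your flag on ``positive-dimensional'' is a genuine catch: the punctured line $\kk^\times F$ always sits in the fiber, so the theorem must be read projectively (as Hwang does), in which case the projective fiber dimension is $s$ and the theorem reduces, correctly, to the equivalence of~\eqref{it:BBKTchar} and~\eqref{it:BBKTchar:gen} in \autoref{refintro:BBKT:thm}. Your accounting also exposes a small tension in the Corollary's last sentence: the hypothesis of \autoref{refintro:CFJ:higherPoly} forces $\dim_\kk\cen{F}\geq n$, hence $s\geq n-1$, so the projective fiber has dimension $\geq n-1$ (equivalently the affine $\dim L_F\geq n$), rather than $s\geq n$.
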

    In this context, direct sums are frequently called \emph{Thom-Sebastiani}
    polynomials.

    One could look for polynomials which yield higher-dimensional fibers.
    Our \autoref{refintro:CFJ:higherPoly} yields plenty of examples of such
    polynomials. The ones described in \autoref{refintro:CFJ:higherPolyLocal}
    are (generically) not of Thom-Sebastiani type.

    \begin{cor}
        The dimension of $\nabla^{-1}(\nabla(F))$ is equal to the number of
        minimal generators of $\Ann(F)$ of degree $\deg(F)$. In particular,
        for a polynomial $F$ satisfying the conditions of
        \autoref{refintro:CFJ:higherPoly}, we have $\dim
        \nabla^{-1}(\nabla(F))\geq n$.
    \end{cor}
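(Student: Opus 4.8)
The plan is to identify the fibre $\nabla^{-1}(\nabla(F))$ with an explicit dense open subset of the centroid $\cen{F}$, and then read off its dimension from~\eqref{eq:centroidDim}. Write $F_i:=\partial F/\partial x_i$; since $F$ is concise, $F_1,\dots,F_n$ is a basis of the $n$-dimensional space $\nabla F\subseteq S^{d-1}V$. The claim is that there is an isomorphism of varieties
\[
\nabla^{-1}(\nabla(F))\ \xrightarrow{\sim}\ \cen{F}\cap\GL(V),
\]
sending a concise $G$ with $\nabla G=\nabla F$ to the unique matrix $X=(X_{ij})\in\GL(V)$ determined by $\partial G/\partial x_i=\sum_j X_{ij}F_j$.

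First I would verify that this is well defined and invertible. For $G$ as above, every $\partial G/\partial x_i$ lies in $\nabla F=\langle F_1,\dots,F_n\rangle$, so $X$ exists and is unique; it is invertible because the $\partial G/\partial x_i$ span $\nabla G=\nabla F$. By the explicit form of the action $\circ_1$ (see \autoref{example:action}), after identifying $V\otimes S^{d-1}V$ with tuples $(g_i)_i$ via $\sum_i x_i\otimes g_i$, the tensor $X\circ_1 F$ is the tuple $\bigl(\sum_j X_{ij}F_j\bigr)_i=(\partial G/\partial x_i)_i$; being a gradient, this tuple lies in the subspace $S^dV\subseteq V\otimes S^{d-1}V$, which for $e=1$ is exactly the defining condition of $\cen{F}$, so $X\in\cen{F}$. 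Conversely, given $X\in\cen{F}\cap\GL(V)$, the tuple $\bigl(\sum_j X_{ij}F_j\bigr)_i$ lies in $S^dV$, i.e.\ is a gradient; by the Euler identity it is the gradient of $G:=\tfrac1d\sum_{i,j}X_{ij}x_iF_j$, and $G$ is concise with $\nabla G=\langle F_1,\dots,F_n\rangle=\nabla F$ since $X$ is invertible. The two assignments are mutually inverse and polynomial in suitable coordinates, hence give the asserted isomorphism.

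Next, $\cen{F}$ is a linear subspace of $\End(V)$ — hence irreducible — and $\cen{F}\cap\GL(V)$ is a nonempty Zariski-open subset of it, containing $\id$ (which corresponds to $G=F$). Therefore $\dim\nabla^{-1}(\nabla(F))=\dim\cen{F}$, and by~\eqref{eq:centroidDim} this is $1$ plus the number of minimal generators of $\Ann(F)$ of degree $d$; the extra unit is contributed by the scalars $\kk\cdot\id$, i.e.\ by the rescalings $F\mapsto\lambda F$ that $\nabla$ collapses (which is why $\nabla$ is only ``generically one-to-one'' once the source is projectivised), so modulo these the fibre has dimension exactly that number of generators, as claimed. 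For the last assertion: the hypotheses of \autoref{refintro:CFJ:higherPoly} give an $r\in\cen{F}$ whose minimal polynomial has degree $n$, so $\kk[r]$ is an $n$-dimensional subalgebra of $\cen{F}$; hence $\dim\cen{F}\geq n$ and therefore $\dim\nabla^{-1}(\nabla(F))\geq n$.

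The only genuinely non-formal step is the middle one: translating the centroid's symmetry condition $X\circ_1 F\in S^dV$ into the classical integrability (``closedness'') condition that makes the tuple $\bigl(\sum_j X_{ij}F_j\bigr)_i$ a gradient. This is the sole place where the Veronese structure really enters — through the explicit $\circ_1$-action together with the Euler and Poincaré identities — while everything else reduces to linear algebra and to the general properties of the centroid recorded after~\eqref{eq:centroids}.
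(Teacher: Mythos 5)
Your core idea is the right one and, as far as the paper gives anything to compare against (the corollary is stated without a separate proof, just following from the results it refers to), it is surely the intended argument: the isomorphism $\nabla^{-1}(\nabla F)\xrightarrow{\sim}\cen{F}\cap\GL(V)$, $G\mapsto X$ with $\partial G/\partial x_i=\sum_j X_{ij}\,\partial F/\partial x_j$, is correct, the verification that $X\in\cen F$ is exactly the translation of the condition $X\circ_1 F\in S^dV$ via the nonsymmetric Euler identity (\autoref{ref:Euler:lem}), and $\cen{F}\cap\GL(V)$ is a dense open in the linear space $\cen{F}$, so the affine fibre has dimension $\dim_\kk\cen{F}$.

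Where the write-up goes astray is in the final bookkeeping, and the slip is worth naming precisely because it mirrors an off-by-one tension already visible in the corollary as printed. You correctly get $\dim\nabla^{-1}(\nabla F)=\dim\cen F=1+\#\{\text{min.\ gens.\ of }\Ann(F)\text{ in degree }d\}$ by~\eqref{eq:centroidDim}. You then attribute the extra unit to the scalars $\kk\cdot\id$ and pass to the projective count, so that the fibre dimension ``modulo these'' equals the number of generators — this is the interpretation under which the first sentence of the corollary is literally true (and the one implicitly used in \autoref{refintro:Hwang:thm}, where ``positive-dimensional fibre'' must mean positive projective dimension, since affinely the fibre always contains $\kk^\ast F$). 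But in the very next sentence you deduce $\dim\nabla^{-1}(\nabla F)\geq n$ from $\dim\cen F\geq n$, which silently reverts to the affine count. Under the projective convention, $\dim\cen F\geq n$ only yields $\geq n-1$; under the affine convention, the first sentence would need to read ``$1$ plus the number of minimal generators.'' You cannot have both within a single consistent reading. You should commit to one convention throughout (and, in passing, flag that the corollary as stated seems to combine the two).
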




    A natural generalisation of the gradient map is the (total)
    \emph{contraction} map, which maps a tensor $T\in S^{d_1} V_1 \ot S^{d_2} V_2
    \ot\cdots \ot S^{d_e} V_e$ to a tuple of spaces
    \begin{align}
        T(V_1^{\vee}) &\subseteq S^{d_1-1} V_1 \ot S^{d_2} V_2
    \ot\cdots \ot S^{d_e} V_e,\\\notag
        T(V_2^{\vee}) &\subseteq S^{d_1} V_1 \ot S^{d_2-1} V_2
    \ot\cdots \ot S^{d_e} V_e,\\\notag
     \qquad\vdots \\\notag
        T(V_e^{\vee}) &\subseteq S^{d_1} V_1 \ot S^{d_2} V_2
    \ot\cdots \ot S^{d_e-1} V_e.\notag
\end{align}
Again, this map is generically one-to-one (although we do not know of an
explicit reference for this fact). Geometrically, here we are
investigating hyperfsurfaces in products of projective spaces, which is of
interest in projective geometry. Our solution to \autoref{problem:formats} applies here as
well, yielding a source, but not a full classification, of tensors with large
fiber of the contraction map, see \autoref{teo: epsilon} together with
\autoref{ref:centroidFromAnn:teo}.

\subsection{Applications to secant varieties}\label{ssec:secants}

    Suppose now that $V_1, \ldots ,V_e$ all have the same dimension, equal to
    $m$ and $T\in S^{d_1}V_1 \ootimes S^{d_e}V_e$ is concise. It is a
    classical and very hard problem to decide whether $T$ lies in the $m$-th
    secant variety
    \[
        \sigma_m\left( \mathbb{P}V_1\times \cdots \times \mathbb{P}V_e \right)
        \subseteq \mathbb{P}\left( S^{d_1} V_1 \ot S^{d_2} V_2
        \ot\cdots \ot S^{d_e} V_e \right)
    \]
    of a Segre-Veronese embedding.
    This problem was one of the principal motivations
    of~\cite{Buczynska_Buczynski__border, JLP24}, who work in the Segre
    format with $e=3$.  In~\cite{Buczynska_Buczynski__border} Buczy{\'n}ska
    and Buczy{\'n}ski deduced that it is necessary to satisfy the
    111-condition. In~\cite{JLP24} the authors recasted this into a condition
    $\dim_{\kk} \cen{T} \geq m$. So it is known that
    \[
        \sigma_m\cap \left\{ \mbox{concise} \right\}\subseteq \left\{ [T]\ |\
            \mbox{concise, }\dim_{\kk} \cen{T}\geq m
        \right\}.
    \]
    The reverse inclusion is very false in general, this has connections to
    smoothability and cactus phenomena. We make a small step in the positive
    direction, as follows:
    \begin{prop}[\autoref{prop:direct_sum_limit}]
        Suppose that $\dim_{\kk}\cen{T}\geq m$ is generated by a single element
        $r\in \cen{T}$. Then $T$ lies in $\sigma_m$.
    \end{prop}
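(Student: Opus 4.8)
The plan is to pinch \(\cen{T}\) between the hypothesis \(\dim_{\kk}\cen{T}\ge m\) and the general inequality \(\dim_{\kk}\cen{T}\le m\), deduce that \(\cen{T}\) is a monogenic algebra of dimension exactly \(m\) acting on each \(V_j\) as a regular (non-derogatory) operator, split \(T\) accordingly, and recognise each summand as a truncated Taylor datum of a rank-one curve, hence a point of a secant variety. For the first step: since \(\cen{T}=\kk[r]\) is a commutative finite-dimensional algebra, \(\dim_{\kk}\cen{T}\) is the degree of the minimal polynomial of \(r\) in \(\cen{T}\); because the projection \(\cen{T}\to\End(V_j)\) is an injective algebra homomorphism, this equals the degree of the minimal polynomial of the operator \(r\) on \(V_j\), which is at most \(\dim V_j=m\). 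Combined with the hypothesis, \(\dim_{\kk}\cen{T}=m\), and the minimal polynomial of \(r\) on each \(V_j\) has degree \(m=\dim V_j\), so \(V_j\) is a cyclic \(\kk[r]\)-module. Over \(\kk=\bbC\) this gives \(\cen{T}\cong\prod_{i=1}^{s}\kk[\eps]/(\eps^{a_i})\) with \(\sum_i a_i=m\).

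Next, by \autoref{refintro:directSum:thm} we obtain a direct sum decomposition \(T=T_1+\cdots+T_s\) with \(\cen{T_i}\cong(\cen{T})_{\mfm_i}\cong\kk[\eps]/(\eps^{a_i})\), again monogenic (generated by the image \(r_i\) of \(r\)). Writing \(V_j=\bigoplus_i V_{j,i}\) for the induced block decomposition and \(m_i=\dim V_{j,i}\), conciseness of \(T\) gives \(\sum_i m_i=m\), while applying the inequality of the first step to each \(T_i\) gives \(a_i\le m_i\); summing forces \(a_i=m_i\) for all \(i\). Hence each \(T_i\) is concise in \(\bigotimes_j S^{d_j}V_{j,i}\) with \(\dim V_{j,i}=a_i\) and \(r_i\) regular on each \(V_{j,i}\). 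Since the \(T_i\) involve pairwise disjoint variables, it suffices to prove \(T_i\in\sigma_{a_i}\bigl(\bbP V_{1,i}\times\cdots\times\bbP V_{e,i}\bigr)\) for every \(i\): then \(T\) is a limit of sums of \(\sum_i a_i=m\) rank-one tensors of \(\bbP V_1\times\cdots\times\bbP V_e\).

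It remains to treat one such summand, so write \(N=a_i\) and rename \(T=T_i\), \(W_j=V_{j,i}\). By the structural classification (\autoref{teo: epsilon}, resp.\ \autoref{cor: simmetrici epsilon}, and the Segre--Veronese case by combining them), \(W_j=\bigoplus_{0\le b<q\le N}W_j^{(q,b)}\) with fixed isomorphisms \(W_j^{(q,q-1)}\to\cdots\to W_j^{(q,0)}\); since \(r_i\) is regular of order \(N=\dim W_j\), the identity \(\dim W_j=\sum_q q\cdot\dim W_j^{(q,\bullet)}\) forces \(\dim W_j^{(N,\bullet)}=1\) and \(W_j^{(q,\bullet)}=0\) for \(q<N\). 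Fixing compatible bases \(e_{j,0},\ldots,e_{j,N-1}\) of \(W_j\), the operator \(D_p\) in the analogue of~\eqref{eq:summation} acts factorwise, on the \(j\)-th factor as the differential operator induced by \(e_{j,N-1}\mapsto e_{j,N-1-p}\), and the data reduce to \(F_k=c_k\,F_0\) with \(F_0=e_{1,N-1}^{d_1}\ot\cdots\ot e_{e,N-1}^{d_e}\). Using \(\exp(c\,\partial_x)\hook x^{d}=(x+c)^{d}\) factorwise, \(\exp\bigl(\sum_{p=1}^{N-1}t^{p}D_p\bigr)\hook F_0=\ell_1(t)^{d_1}\ot\cdots\ot\ell_e(t)^{d_e}=:\gamma(t)\) with \(\ell_j(t)=e_{j,N-1}+te_{j,N-2}+\cdots+t^{N-1}e_{j,0}\); comparing Taylor coefficients turns~\eqref{eq:summation} into \(T=\sum_{k=1}^{N}c_k\,\tfrac{1}{(k-1)!}\gamma^{(k-1)}(0)\). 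Thus \(T\) lies in the \((N-1)\)-st osculating space at \(t=0\) of the rank-one curve \(\gamma\); since such an osculating space is the limit of the secant spans \(\langle\gamma(0),\gamma(\tau),\ldots,\gamma((N-1)\tau)\rangle\) as \(\tau\to0\), each a span of \(N\) points of the Segre--Veronese variety, we conclude \(T\in\sigma_N\), and summing over \(i\) gives \(T\in\sigma_m\).

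The step I expect to be the main obstacle is making the third paragraph rigorous: invoking \autoref{teo: epsilon}/\autoref{cor: simmetrici epsilon} in the generality needed for the Segre--Veronese format (the excerpt only spells out the Segre and Veronese cases), reading off from regularity of \(r_i\) that the vector-space data is minimal — a single one-dimensional top diagonal — and carrying out the exponential identification of~\eqref{eq:summation} with the Taylor expansion of \(\gamma\) in the multifactor setting. The remaining ingredients — the bound \(\dim_{\kk}\cen{T}\le m\) from injectivity of the projection, the splitting of \autoref{refintro:directSum:thm}, the classical fact that osculating spaces of a curve on a variety are limits of its secant spans, and the observation that a join of secant varieties along disjoint coordinate subspaces lies in a single secant variety — are routine.
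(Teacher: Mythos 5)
Your proof is correct, and the high-level strategy matches the paper: both arguments reduce via the Chinese Remainder Theorem and \autoref{refintro:directSum:thm} to local (cyclic) summands, pin down that cyclicity forces $\dim_\kk\cen{T}=m$ with each $\cen{T_i}\cong\kk[\eps]/(\eps^{a_i})$ and $\dim V_{j,i}=a_i$, and then treat each local summand separately. The genuine divergence is in the local step. The paper invokes \autoref{prop:direct_sum_limit} directly: its proof constructs an explicit one-parameter family $S_t$ of honest direct sums with $a_i$ terms degenerating to $T_i$ as $t\to0$, built from the interpolation data $\omega_1,\dots,\omega_{a_i}$ and coefficients $\alpha_{k,j}$, and the claim follows because the dimension count you carry out in your second paragraph forces every summand to be supported on a line in each $V_j$, hence a rank-one tensor. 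You instead re-derive the normal form from \autoref{teo: epsilon}/\autoref{cor: simmetrici epsilon}, observe that regularity collapses the Jordan data to a single one-dimensional column, and recognise~\eqref{eq:summation} as the $(N-1)$-st osculating datum at $t=0$ of the rank-one rational curve $\gamma(t)=\ell_1(t)^{d_1}\ot\cdots\ot\ell_e(t)^{d_e}$. The two views are morally identical: in the cyclic case the $j$-th summand $T^{(j)}(t)$ of the paper's family is proportional to $\gamma(t\omega_j)$, so the paper's approximating direct sums are literally your secant spans $\langle\gamma(t\omega_1),\dots,\gamma(t\omega_{a_i})\rangle$. What your phrasing buys is the clean classical picture (osculating spaces as limits of secant spans) and avoids redoing the interpolation coefficients; what it costs, as you rightly flag, is that you need the Segre--Veronese form of \autoref{teo: epsilon}, which the paper handles uniformly by passing through the embedding~\eqref{eq:desymmetrization} and \autoref{lemma:simmetriciCompatibili} inside the proof of \autoref{prop:direct_sum_limit}, rather than by stating an explicit partial-symmetric normal form. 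Filling that gap is routine but should be done; with it, your argument is a valid and slightly more conceptual alternative to the paper's.
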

    Of course, if $T$ is a multiplication tensor in an algebra, then the above
    is clear and follows from irreducibility of the Hilbert scheme of
    $\mathbb{A}^1$. The main point of the Proposition is that it works for all
    tensors $T$.





\subsection*{Acknowledgements}
The authors would like to thank Giorgio Ottaviani for the beautiful lecture at \emph{Gianfranco
Casnati's Legacy} meeting, where we learned about Mammana's
work. The first author has been partially founded by the Italian Ministry of University and Research in the framework of the Call for Proposals for scrolling of final rankings of the PRIN 2022 call - Protocol no.~2022NBN7TL. The first and the second authors have been partially supported by the project \textit{Thematic Research Programmes}, Action I.1.5 of the program \textit{Excellence Initiative -- Research University} (IDUB) of the Polish Ministry of Science and Higher Education. 

\section{Preliminary notions}
\newcommand{\into}{\injects}

\subsection{Partially symmetric tensors:
notation}\label{ssec:PartiallSymmetric}
We work over an algebraically closed field $\kk$. The symbol $\ot$ denotes
tensoring over $\kk$. We will consider tensors $T\in S^{d_1} V_1 \ot \cdots
\ot S^{d_e} V_e$, but when $e = 1$, we will drop the indices and refer to
$T\in S^d V$. We also assume that $d_1, \ldots ,d_e\geq 1$.

\begin{defn}\label{ref:concise:def}
    A tensor $T\in S^{d_1} V_1 \ot \cdots \ot S^{d_e} V_e$ is \emph{concise}
    if there are no subspaces $V_1'\subseteq V_1$, \ldots ,$V_e'\subseteq
    V_e$, at least one of them proper, such that $T\in S^{d_1} V_1' \ot \cdots
    \ot S^{d_e} V_e'$.
\end{defn}

When dealing with symmetric
powers $S^d$ we tacitly assume that $\kk$ has characteristic zero or strictly
larger than $d$. For example, considering $T\in V_1 \ot V_2\ot\cdots \ot V_e$,
then we do not assume anything about the characteristic, while considering
$T\in S^2V_1 \ot V_2 \cdots \ot V_e$, we assume that $\cha \kk \neq 2$ only.

Thanks to the characteristic assumption, we can view all formats as
partially-symmetric subsets of the Segre format. Precisely speaking, let
\begin{equation}\label{eq:symmetric}
    S^{d} V \into \underbrace{V\ot \cdots \ot V}_d
\end{equation}
be the usual embedding of symmetric tensors, which maps an element $v_1\cdots v_d$, for $v_1, \ldots ,v_d\in V$, to an element
\[
    \frac{1}{d!}\sum_{\sigma\in\bfS_m} v_{\sigma(1)}\ot \cdots \ot
    v_{\sigma(d)}.
\]
A composition of~\eqref{eq:symmetric} yields an embedding
\begin{equation}\label{eq:desymmetrization}
    S^{d_1} V_1 \ot \cdots \ot S^{d_e} V_e \into \underbrace{V_1\ot \cdots \ot
    V_1}_{d_1} \ot \underbrace{V_2\ot \cdots \ot V_2}_{d_2}\ot \cdots \ot \underbrace{V_e\ot \cdots \ot V_e}_{d_e}
\end{equation}
where the right hand side has in total $d_1 + d_2 + \cdots + d_e$ factors.

Let us briefly discuss the actions of matrices on these tensors.  In the Segre
format, the situation is clear: having a tensor $T\in V_1 \ot \cdots \ot V_e$
and a matrix $X_j\in\End(V_j)$ for some $1\leq j\leq e$, we apply $X_j$ onto
the $j$-th factor to obtain another tensor $X_j\circ_j T\in V_1\ot \cdots \ot
V_e$.

In the general format, $T\in S^{d_1}V_1\ot \cdots \ot S^{d_e}V_e$, we have two
possibilities for an action of $X_j\in \End(V_j)$, where $1\leq j\leq e$.  From the Lie algebra
perspective, the space $\End(V_j)$ is the Lie algebra of $\GL(V_j)$ so it acts
naturally on $S^{d_j}V_j$ by differential operators. Consequently, it also
acts on the space $S^{d_1}V_1\ot \cdots \ot
S^{d_e}V_e$. Let us denote this action by $X_j\hook_j T$.
Explicitly, we have
\begin{equation}\label{eq:opAction}
    X_j \hook_j T_1\ot \cdots \ot (v_1 \cdots
    v_{d_j})\ot  \cdots \ot T_{e} = T_1\ootimes \biggl(
    \sum_{i=1}^{d_j} v_1\cdots v_{i-1} X_j(v_i)
    \cdots v_{d_j}
    \biggr)\ot  \cdots \ot T_{e}.
\end{equation}
From the more direct perspective, we can consider the image of
$T$ inside the bigger space using~\eqref{eq:desymmetrization} and then sum up
the action of $X_j$ via $\circ$ on \emph{each} of the coordinates { \[d_{1}+ \cdots
+d_{j-1}+1,\quad \ldots, \quad d_{1}+ \cdots +d_{j-1}+d_j.
\]
} It follows
from~\eqref{eq:opAction}, that the two operations agree.

A word of warning is necessary. It is \emph{not} true that one can identify
$\hook_j$ with any of the $\circ_{d_1+ \cdots +d_{j-1}+1}$, even in the
case of polynomials, as the following remark and example show.

\begin{rem}\label{rmk:indexing}
Let $T\in S^{d_1} V_1 \ot \cdots \ot S^{d_e} V_e$ and let $X_j\in
\End(V_j)$ for some fixed $1\leq j\leq m$. We can then view $T$ inside
$S^{d_1} V_1 \ootimes (V_j\ot S^{d_{j}-1}V_{j})\ootimes S^{d_e} V_e$ and apply
the operator $X_j$ to $V_j$, obtaining another element {$$X_j\circ_j T\in
S^{d_1} V_1 \ootimes (V_j\ot S^{d_{j}-1}V_{j})\ootimes S^{d_e} V_e.$$} The
symmetrization~\eqref{eq:symmetric} of this element is $\frac{1}{d_j}(X_j\hook T)$.
\end{rem}

\begin{example}\label{example:action}
    Let $F\in S^d V$. Take a basis $V = \pa{x_1, \ldots ,x_n}$ and the dual
    basis $\alpha_1, \ldots ,\alpha_n$ of $V^{\vee}$. Let $X\in
    \End(V) = V\ot V^{\vee}$ be given by $X = [\lambda_{ij}]$, so that {$$X =
    \sum_{i=1}^n \sum_{j=1}^n \lambda_{ij}x_i \ot \alpha_j$$.} We have
    \[
        X \hook F = \sum_{i=1}^n\sum_{j=1}^n \lambda_{ij}x_i\cdot \frac{\partial F}{\partial x_j}
    \]
    However, we have
    \[
        X\circ_1 F = \sum_{i=1}^n\sum_{j=1}^n \lambda_{ij}x_i\ot \frac{\partial
        F}{\partial x_j},
    \]
    which need not to be symmetric: already if $d = 2$, $F = x_1^2 + x_2^2$, and $X
    = x_1\ot \alpha_2$, we have \[X\circ_1 F = x_1\ot x_2 \neq x_2\ot x_1 =
    X\circ_2 F.\]
\end{example}

\subsection{Apolarity}\label{ssec:apolarity}

    Consider the ring $\calD \coloneqq S^{\bullet}V_1^{\vee}\ootimes
    S^{\bullet}V_{e}^{\vee}$ with its natural $\mathbb{N}^{e}$-grading. The
    ring $\calD$ is isomorphic to $S^{\bullet}(V_{1}^{\vee}\oplus \cdots
    \oplus V_{e}^{\vee})$.

    This ring acts on every $S^{d_1}V_1 \ootimes
    S^{d_e}V_e$, as we describe below. Let {$$T = T_1\ootimes T_e\in S^{d_1}V_1 \ootimes
    S^{d_e}V_e.$$}
    Let $\alpha\in V_j^{\vee}$. We
    define $\alpha\hook T$ by the formula resembling~\eqref{eq:opAction}:
    \[
        \alpha \hook T_1\ootimes (v_1 \cdots
        v_{d_j})\ootimes T_{e} = T_1\ootimes\biggl(
        \sum_{i=1}^{d_j} v_1 \cdots v_{i-1} \alpha(v_i)
        v_{i+1} \cdots v_{d_j}
        \biggr)\ot  \cdots \ot T_{e},
    \]
    where $\alpha(v_i)\in \kk$ are scalars. The ring $\calD$ is a polynomial
    algebra with linear forms spanned by all elements $1\ootimes \alpha\ootimes 1$, where
    $1\leq j\leq m$ and $\alpha\in V_{j}^{\vee}$, hence the above yields an
    action of $\calD$ on $S^{d_1}V_1 \ootimes
    S^{d_e}V_e$, called the \emph{apolarity action}. (For experts: this is the
    partial-derivation action, not the contraction action.)

    \begin{defn}
        The \emph{apolar ideal} $\Ann(T)$ is $\Set{ r\in \calD |
            r\hook T
        = 0}$. It is homogeneous with respect to the
        $\mathbb{N}^{e}$-grading on $\calD$.
    \end{defn}
    In the literature, the apolarity is defined classically in the polynomial
    setting, see for example~\cite[Appendix~A]{iakanev},
    \cite{BBKT_direct_sums}
    and it is vastly generalised in recent years~\cite{Galazka,
    Buczynska_Buczynski__border}. We observe that the
    setup above can be viewed purely in the polynomial setting.
    Namely, the space $S^{d_1}V_1 \ootimes
    S^{d_e}V_e$ embeds into $S^{d_1+ \cdots +d_e} (V_1\oplus \cdots \oplus
    V_e)$. The action of $\calD$ on $S^{d_1}V_1 \ootimes
    S^{d_e}V_e$ and the action of $S^{\bullet}(V_{1}^{\vee}\oplus \cdots
    \oplus V_{e}^{\vee})$ on $S^{d_1+ \cdots +d_e} (V_1\oplus \cdots \oplus
    V_e)$ agree.

    \begin{example}
        In \emph{Macaulay2}, the setup above is easily created as follows. For
        concreteness, assume $e=2$, $V_1 = \an{v_{1,1}, v_{1,2}}$, $V_2 =
        \an{v_{2,1}, v_{2,2}}$ and $T = v_{1,1}\ot v_{2,1}v_{2,2} + v_{1,2}\ot
        v_{2,1}^2\in V_1\ot S^{2}V_2$.
\begin{verbatim}
D = QQ[v_(1,1), v_(1,2)] ** QQ[v_(2,1), v_(2,2)];
T = v_(1,1)*v_(2,1)*v_(2,2) + v_(1,2)*v_(2,1)^2;
AnnT = inverseSystem(T);
#select(flatten entries mingens AnnT, gen -> degree gen == degree T)
\end{verbatim}
    \end{example}

\subsection{Centroids}
\begin{defn}\label{ref:centroid:def}
$T\in V_1\otimes\cdots\otimes V_e$ be a concise tensor. A tuple
\[
    (X_1,\dots, X_e)\in \End(V_1)\times\cdots\times\End(V_e)
\]
is \textit{compatible with $T$} if $X_1\circ_1 T=\dots=X_e\circ_e T$.
The vector subspace $\cen{T}$ consisting of all $e$-tuples compatible with $T$
is called the \textit {centroid of $T$}. For $r\in \cen{T}$ we denote $r\circ
T$ any of the equal tensors $X_1\circ_1 T$, $X_2 \circ_2 T$, \ldots ,
$X_e\circ_e T$.
\end{defn}

The following fundamental result is is stated
in~\cite[page~46]{Brooksbank_Maglione_Wilson} and proven in the case $e=3$
in~\cite[\S4]{JLP24}, the proof generalises immediately.
\begin{teo}[\cite{Brooksbank_Maglione_Wilson}, \cite{JLP24}]\label{ref:centroidIsSubalgebra:thm}
    Let $e\geq 3$.
    The subspace $\cen{T} \subseteq \End(V_1)\times \cdots \times \End(V_e)$
    is a commutative unital subalgebra. The projection of $\cen{T}$ to each of
    the factors is injective. For $r\in \cen{T}$ we have $r\circ T = 0$ if and
    only if $r = 0$.
\end{teo}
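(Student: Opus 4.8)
The plan is to verify the assertions in the order: $\cen{T}$ is a linear subspace containing the unit; conciseness forces $X_j\circ_j T=0\Rightarrow X_j=0$; $\cen{T}$ is closed under componentwise composition; and that composition is commutative. The first point is immediate, since the defining equations $X_1\circ_1 T=\dots=X_e\circ_e T$ are linear in $(X_1,\dots,X_e)$ and hold for $(\id_{V_1},\dots,\id_{V_e})$, which is a two-sided unit for componentwise composition. For the second point I would fix $j$, choose a basis $b_1,\dots,b_{n_j}$ of $V_j$, and write $T=\sum_k b_k\otimes w_k$ with $w_k\in\bigotimes_{l\neq j}V_l$; conciseness of $T$ in the $j$-th factor is precisely the linear independence of $w_1,\dots,w_{n_j}$, so $X_j\circ_j T=\sum_l b_l\otimes\bigl(\sum_k(X_j)_{lk}w_k\bigr)=0$ forces every matrix entry $(X_j)_{lk}$ to vanish, i.e.\ $X_j=0$. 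It follows at once that if $r=(X_1,\dots,X_e)\in\cen{T}$ satisfies $r\circ T=0$ then each $X_j\circ_j T=r\circ T=0$, hence $r=0$; applying this to an $r$ lying in the kernel of a coordinate projection $\cen{T}\to\End(V_j)$ also yields injectivity of each such projection.

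The one computational fact used below is that for $i\neq j$ the operations $\circ_i$ and $\circ_j$ act on disjoint tensor factors, so $X_i\circ_i(Y_j\circ_j T)=Y_j\circ_j(X_i\circ_i T)$ for all $X_i\in\End(V_i)$ and $Y_j\in\End(V_j)$; this is checked on pure tensors and extended by linearity. The substantive step is closure under composition, and it is here that the hypothesis $e\geq 3$ is used. Given $r=(X_1,\dots,X_e)$ and $s=(Y_1,\dots,Y_e)$ in $\cen{T}$, write $s\circ T$ for their common value; then for $i\neq j$, combining the centroid relations for $s$ and for $r$ with the commuting fact,
\begin{align*}
(X_iY_i)\circ_i T &= X_i\circ_i(s\circ T) = X_i\circ_i(Y_j\circ_j T)\\
&= Y_j\circ_j(X_i\circ_i T) = Y_j\circ_j(X_j\circ_j T) = (Y_jX_j)\circ_j T.
\end{align*}
Now for any two indices $i_1\neq i_2$, since $e\geq 3$ there is a third index $j\notin\{i_1,i_2\}$, and applying the displayed identity to the pairs $(i_1,j)$ and $(i_2,j)$ gives $(X_{i_1}Y_{i_1})\circ_{i_1}T=(Y_jX_j)\circ_j T=(X_{i_2}Y_{i_2})\circ_{i_2}T$. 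Hence $rs:=(X_1Y_1,\dots,X_eY_e)\in\cen{T}$, and symmetrically $sr:=(Y_1X_1,\dots,Y_eX_e)\in\cen{T}$, so $\cen{T}$ is a unital subalgebra. For commutativity, the displayed identity also gives $(rs)\circ T=(Y_jX_j)\circ_j T=(sr)\circ T$ for any $j\neq i$; thus $(rs-sr)\circ T=0$, and the conciseness step forces $rs=sr$.

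I do not expect a genuine obstacle: the only idea is that $e\geq 3$ supplies, for every ordered pair of factors, a third factor on which to carry the composition, and this is exactly what fails when $e=2$. Everything else is linear bookkeeping together with the characterisation of conciseness by linear independence of the flattening vectors. This is the evident extension of the $e=3$ argument of~\cite{JLP24}.
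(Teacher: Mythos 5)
Your proof is correct, and it is essentially the argument that the paper delegates to the cited source (\cite{JLP24}, \S4 for $e=3$) with the note that ``the proof generalises immediately''; the paper itself gives no direct proof of this theorem. Your use of conciseness as linear independence of the flattening vectors, and of a third index to carry the product, is exactly the generalisation the authors have in mind.
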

For more about the theory of centroids, we refer to \cite{Myasnikov, Wilson,
Brooksbank_Maglione_Wilson, JLP24, Jelisiejew_Bedlewo}. For a tensor $T\in S^{d_1}V_1\ot \cdots \ot S^{d_e} V_e$ we now show
that the centroid defined in \autoref{ref:centroid:def} identifies with the
centroid defined in~\eqref{eq:centroids}.


\begin{lem}\label{lemma:simmetriciCompatibili}
    Let $d_1 +  \cdots + d_e\geq 3$, let $T\in S^{d_1}V_1\ot \cdots \ot
    S^{d_e} V_e$ be a concise tensor, and let $(X_1, \ldots ,X_{d_1 +  \cdots + d_e})$
    be an element of $\cen{T}$. Then $X_1 =  \cdots = X_{d_1}$, $X_{d_1+1}=
    \cdots = X_{d_1 + d_2}$ and so on, so that the action of the tuple
    coincides with the action of $(X_{d_1}, X_{d_1+d_2}, \ldots ,X_{d_1 + d_2
    +  \cdots + d_e})$ as described in~\eqref{eq:centroids} in the
    introduction. Conversely, a tuple $(Y_1, \ldots ,Y_e)\in
    \End(V_1)\times \cdots \times \End(V_e)$ yields an element $(Y_1, \ldots
    ,Y_1, Y_2, \ldots ,Y_2,  \ldots, Y_e,  \ldots , Y_e)$ of $\cen{T}$ if
    and only if the following two conditions hold:
    \begin{enumerate}
        \item $\frac{1}{d_1}Y_1\hook_1 T = \frac{1}{d_2}Y_2\hook_2 T =
            \cdots  = \frac{1}{d_e}Y_e\hook_e
            T$;
        \item $Y_1\circ_{d_1} T$,  \ldots, $Y_e\circ_{d_1+ \cdots +d_e} T$ all
            lie in $S^{d_1}V_1\ot \cdots \ot S^{d_e} V_e$.
    \end{enumerate}
\end{lem}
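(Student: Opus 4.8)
The plan is to work inside the big Segre space $V_1^{\ot d_1}\ot\cdots\ot V_e^{\ot d_e}$ into which $T$ embeds via \eqref{eq:desymmetrization}, and to exploit the fact that this embedding is symmetric in each block of $d_j$ factors together with the injectivity-on-factors property of \autoref{ref:centroidIsSubalgebra:thm}. First I would fix an element $(X_1,\ldots,X_{d_1+\cdots+d_e})\in\cen{T}$, where the indexing follows the blocks: indices $1,\ldots,d_1$ act on the $V_1$-slots, indices $d_1+1,\ldots,d_1+d_2$ act on the $V_2$-slots, and so on. The defining condition is $X_a\circ_a T = X_b\circ_b T$ for all $a,b$, and $T$ lies in the partially-symmetric subspace. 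I want to show $X_a$ depends only on which block $a$ lies in.

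The key step is the following symmetry argument. Fix a block, say the first one with slots $1,\ldots,d_1$, and pick two slots $a,b$ in it. Because $T$ is invariant under the transposition $\tau=(a\,b)$ of these two slots, applying $\tau$ to the equation $X_a\circ_a T = X_b\circ_b T$ gives $X_a\circ_b T = X_b\circ_a T$ (the operator on slot $a$ gets moved to slot $b$ and vice versa, while $T$ is fixed). Comparing with the original equation $X_a\circ_a T = X_b\circ_b T$ and with $X_b\circ_a T = X_b\circ_b T$ (both sides equal the common value $r\circ T$), one deduces $(X_a-X_b)\circ_a T = 0$; then conciseness of $T$ (which makes the slotwise action injective on endomorphisms, exactly as in the last sentence of \autoref{ref:centroidIsSubalgebra:thm}) forces $X_a = X_b$. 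Iterating over all pairs of slots in each block gives $X_1=\cdots=X_{d_1}=:Y_1$, $X_{d_1+1}=\cdots=X_{d_1+d_2}=:Y_2$, and so on. This proves the first assertion; the identification with \eqref{eq:centroids} is then just \autoref{rmk:indexing}, which says that the symmetrized version of $Y_j\circ_{d_1+\cdots+d_{j-1}+1}T$ equals $\tfrac1{d_j}Y_j\hook_j T$, so the common value of the $\circ$-actions corresponds under symmetrization to the common value of the normalized $\hook$-actions.

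For the converse, given $(Y_1,\ldots,Y_e)$ I form the tuple $(Y_1,\ldots,Y_1,\ldots,Y_e,\ldots,Y_e)$ and must check it is compatible with $T$ in the Segre sense, i.e.\ all the $Y_j\circ_{(\cdot)}T$ coincide and lie in $S^{d_1}V_1\ot\cdots\ot S^{d_e}V_e$. Within a single block the actions $Y_j\circ_a T$ for the various slots $a$ in that block all agree: again by the transposition symmetry of $T$, $\tau$ sends $Y_j\circ_a T$ to $Y_j\circ_b T$, but the two images need not be equal unless the result is itself symmetric in slots $a,b$ — which is precisely guaranteed by condition (2). So condition (2) lets us identify all the within-block actions, reducing the compatibility requirement to one equation per block, namely $Y_1\circ_{d_1}T = \cdots = Y_e\circ_{d_1+\cdots+d_e}T$; symmetrizing via \autoref{rmk:indexing} turns this into condition (1), $\tfrac1{d_1}Y_1\hook_1 T = \cdots = \tfrac1{d_e}Y_e\hook_e T$. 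Conversely, if (1) and (2) hold, then each block-action is well-defined and symmetric by (2), and the cross-block equalities follow from (1) after undoing the symmetrization, so the tuple lies in $\cen{T}$.

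The step I expect to be the main obstacle is making the transposition argument fully rigorous on the nose: one has to be careful that "$T$ is symmetric in slots $a,b$" is used correctly when it sits in a tensor product with an \emph{un}symmetrized slot (after applying $X_a$ on slot $a$, the result is no longer symmetric there), and that the injectivity of the slotwise action — which is the Segre-format statement $r\circ T=0\Rightarrow r=0$ from \autoref{ref:centroidIsSubalgebra:thm}, valid because $d_1+\cdots+d_e\ge 3$ — is applied to the right endomorphism acting on the right single slot. Everything else is bookkeeping: tracking which of the $d_1+\cdots+d_e$ Segre slots is being acted on, and invoking \autoref{rmk:indexing} to pass between the $\circ$ and $\hook$ normalizations.
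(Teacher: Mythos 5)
Your second half (the converse, with conditions (1) and (2)) is essentially the paper's argument and is fine. But the forward direction has a genuine gap.

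You claim that from $X_a\circ_a T = X_b\circ_b T$ and the $\tau$-transformed identity $X_a\circ_b T = X_b\circ_a T$ one can deduce $(X_a-X_b)\circ_a T = 0$, citing "$X_b\circ_a T = X_b\circ_b T$ (both sides equal the common value $r\circ T$)." That equality is not a centroid condition: the centroid constraint only says that the diagonal contractions $X_c\circ_c T$ coincide, and $X_b\circ_a T$ (applying $X_b$ on slot $a$) is unconstrained. Your two-index argument cannot close the loop. Indeed, with only two slots the statement is false: for $T = \sum_i e_i\otimes e_i\in V\otimes V$ and $X_2 = X_1^{\mathrm t}$ one has $X_1\circ_1 T = X_2\circ_2 T$ with $X_1\neq X_2$. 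This is exactly where the hypothesis $d_1+\cdots+d_e\geq 3$ enters: the paper fixes a \emph{third} slot $k\neq a,b$ and writes
\[
X_a\circ_b T = (ab)\bigl(X_a\circ_a (ab)T\bigr) = (ab)(X_a\circ_a T) = (ab)(X_k\circ_k T) = X_k\circ_k (ab)T = X_k\circ_k T = X_b\circ_b T,
\]
so that $(X_a - X_b)\circ_b T = 0$, and only then invokes conciseness (as in \cite{JLP24}*{Lemma 3.1}) to get $X_a = X_b$. The passage through the third slot $k$, using that $(ab)$ commutes with $\circ_k$ and that $(ab)T = T$, is precisely what you omit. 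You do use $\geq 3$ in the proposal, but only to justify the injectivity of the slotwise action — that is a separate use of conciseness and does not replace the need for a third slot in the chain of equalities.
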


\begin{proof}
    Let $(12)\in \bfS_{d_1 \cdots +d_e}$ denote the transposition of first and
    second factors. Choose any index $k$ different from $1,2$. We have
    \[
        X_1\circ_2 T=(12)\bigl(X_1\circ_1 (12)T\bigr)=(12)(X_1\circ_1
        T)=(12)(X_k\circ_k T)=\bigl(X_k\circ_k (12)T\bigr)=X_k\circ_k T=X_2\circ_2 T,
    \]
    so that $(X_1 - X_2)\circ_2 T = 0$. Since $T$ is concise, as
    in~\cite{JLP24}*{Lemma 3.1} we conclude that $X_1 = X_2$. All the other
    equalities are proven in the same way.

    To prove the assertion about $(Y_1, \ldots ,Y_e)$, assume first that the
    two conditions hold. For
    every $1\leq k\leq d_1$, let $(d_1k)$ be the transposition of $d_1$ with
    $k$. By the second condition, the tensor $Y_1\circ_{d_1} T$ is symmetric,
    so
    \[
        Y_1\circ_{k} T = (d_1k)(Y_1\circ_{d_1} (d_1k)T) =
        (d_1k)(Y_1\circ_{d_1} T) = Y_1 \circ_{d_1} T,
    \]
    which implies that $Y_1 \circ_1 T =  \cdots = Y_1 \circ_{d_1} T$. Since
    all these are equal, they are also equal to \[\frac{1}{d_1}\biggl(
    \sum_{k=1}^{d_1} Y_1 \circ_k T \biggr) = \frac{1}{d_1}(Y_1\hook_1 T).\]
    Similar arguments hold for $Y_2, \ldots ,Y_e$. The first condition now
    implies that \[(Y_1, \ldots
    ,Y_1, Y_2, \ldots ,Y_2,  \ldots, Y_e,  \ldots , Y_e)\] is in the centroid.
    The converse is similar.
\end{proof}

Directly from the definition, for every $1\leq i\leq m$, we have a
homomorphism of algebras $\cen{T}\to \End(V_i)$, hence $V_i$ is a
$\cen{T}$-module.

\subsubsection{Centroids and apolar ideals}
The linear-algebraic algorithm for computing the centroid is quite
self-evident from~\eqref{eq:centroids} and it is
implemented in \emph{Macaulay2} for example in~\cite[Auxiliary
files]{Jagiella_Jelisiejew}. Here we discuss an alternative
algorithm which allows to easily compute the dimension of $\cen{T}$ and, with
more effort, also the algebra structure.

\begin{lem}[nonsymmetric Euler formula]\label{ref:Euler:lem}
    Let $F\in S^{d}V$ be a symmetric polynomial. Let $V = \an{x_1, \ldots
    ,x_n}$. Then we have
    \[
        d\cdot F = \sum_{i=1}^n x_i \otimes \frac{\partial F}{\partial x_i},
    \]
    where the right-hand-side a priori lies in $V \ot S^{d-1}V$.
\end{lem}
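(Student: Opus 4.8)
The plan is to unwind the meaning of the identity and then reduce it to a one-line computation on monomials. The left-hand side $d\cdot F$ is to be read through the partial symmetrization embedding $S^dV\hookrightarrow V\ot S^{d-1}V$ obtained from \eqref{eq:symmetric} by identifying the last $d-1$ tensor slots with $S^{d-1}V$ via multiplication; explicitly this map sends a product of linear forms $v_1\cdots v_d$ to $\frac1d\sum_{k=1}^d v_k\ot(v_1\cdots \widehat{v_k}\cdots v_d)$, the same normalization by $\frac1d$ that occurs in \autoref{rmk:indexing}. Thus the assertion is an equality of two elements of $V\ot S^{d-1}V$: the image of $d\cdot F$ under this embedding, and $\sum_{i=1}^n x_i\ot\frac{\partial F}{\partial x_i}$.

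Both sides are $\kk$-linear in $F$, so it suffices to verify the identity when $F=x_1^{a_1}\cdots x_n^{a_n}$ is a monomial, with $a_1+\cdots+a_n=d$. On the right-hand side, $\frac{\partial F}{\partial x_i}=a_i\,x_1^{a_1}\cdots x_i^{a_i-1}\cdots x_n^{a_n}$ (the partial-derivation action, cf.\ \autoref{ssec:apolarity}), so $\sum_i x_i\ot\frac{\partial F}{\partial x_i}=\sum_i a_i\,x_i\ot(F/x_i)$. On the left-hand side, view $F$ as a product of $d$ linear forms containing $a_i$ copies of $x_i$ for each $i$; applying the embedding above and grouping the $d$ terms according to which variable occupies the distinguished slot yields $\frac1d\sum_i a_i\,x_i\ot(F/x_i)$, which is exactly $\frac1d$ of the right-hand side. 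Multiplying through by $d$ gives the claim. One could equally well test on $d$-th powers $\ell^d$ of linear forms $\ell=\sum_j c_jx_j$, which span $S^dV$ under the running hypothesis on $\cha\kk$: there $\frac{\partial(\ell^d)}{\partial x_i}=dc_i\ell^{d-1}$, so $\sum_i x_i\ot\frac{\partial(\ell^d)}{\partial x_i}=d\,\ell\ot\ell^{d-1}$, and $\ell\ot\ell^{d-1}$ is precisely the image of $\ell^d$ under the embedding.

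There is no serious obstacle in this argument; the only thing to watch is the bookkeeping of the normalizing constant $\frac1d$ introduced by the symmetrization map — exactly the $\frac1{d_j}$ appearing in \autoref{rmk:indexing} — and the choice of convention for $\partial/\partial x_i$, which must be the partial-derivation action so that differentiating a monomial contributes the factor $a_i$. Once these conventions are fixed, the monomial (or linear-power) check above completes the proof.
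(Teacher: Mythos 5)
Your argument is correct and, in its second variant (checking on $\ell^d$ for $\ell\in V$, using that these span $S^dV$ under the running characteristic hypothesis), coincides exactly with the paper's proof. The extra care you take in spelling out the embedding $S^dV\hookrightarrow V\ot S^{d-1}V$ and the normalization $\tfrac1d$, as well as the parallel monomial check, are harmless elaborations of the same idea.
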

\begin{proof}
    the space $S^d$ is spanned by $\{ \ell^d\ |\ \ell\in V\}$, so we can
    assume $F = \ell^d$. Write $\ell = \sum_{i=1}^n \lambda_i x_i$, then
    \[
        \sum_{i=1}^n x_i \otimes \frac{\partial F}{\partial x_i} =
        \sum_{i=1}^n x_i \otimes d\lambda_i \ell^{d-1} = \biggl(
        \sum_{i=1}^n \lambda_i x_i
        \biggr)\otimes d\cdot \ell^{d-1} = \ell \otimes d\cdot \ell^{d-1} =
        d\cdot \ell^d,
    \]
    as claimed.
\end{proof}

\begin{lem}\label{ref:totalSpan:lem}
    Let $T\in S^{d_1}V_1 \ootimes S^{d_e}V_e$ be concise. Let
    $\Ann(T)$ be its apolar ideal, as in~\autoref{ssec:apolarity}. Consider
    the space
    \begin{equation}\label{eq:totalSpan}
        \left\{ G\in S^{d_1}V_1 \ootimes S^{d_e}V_e\ |\ \forall_{1\leq j\leq
        e}\ V_j^{\vee}\hook_j G
        \subseteq V_j^{\vee}\hook_j T \right\}.
    \end{equation}
    The dimension of this space is one larger than the number of minimal
    generators of $\Ann(T)$ that have degree $(d_1, \ldots ,d_e)$.
\end{lem}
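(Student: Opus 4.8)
The plan is to identify the space~\eqref{eq:totalSpan} --- call it $W$ --- with the orthogonal complement, under the apolarity pairing in multidegree $\mathbf d=(d_1,\dots,d_e)$, of the degree-$\mathbf d$ piece of $\calD_+\Ann(T)$; once this is done the dimension count is immediate and the ``$+1$'' will simply be $\dim\bigl(\calD/\Ann(T)\bigr)_{\mathbf d}=1$. First I would recall that $S^{d_1}V_1\ootimes S^{d_e}V_e$ is a $\calD$-module for the apolarity action of \autoref{ssec:apolarity}, and that for every multidegree $\mathbf c$ the induced pairing between $\calD_{\mathbf c}$ and $S^{c_1}V_1\ootimes S^{c_e}V_e$ is perfect (it is the tensor product of the perfect pairings $S^{c_i}V_i^\vee\times S^{c_i}V_i\to\kk$). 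Write $\bfe_j$ for the $j$-th standard basis vector of $\bbZ^e$, so that $\mathbf d-\bfe_j$ is a legitimate multidegree since $d_j\ge 1$. Unwinding the two defining conditions of $W$, an element $G$ lies in $W$ if and only if, for every $j$ and every $\alpha\in V_j^\vee$, one has $\alpha\hook_j G\in V_j^\vee\hook_j T$ inside $S^{d_1}V_1\ootimes S^{d_j-1}V_j\ootimes S^{d_e}V_e$.

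The heart of the argument will be the identity
\[
    V_j^\vee\hook_j T=\bigl(\Ann(T)_{\mathbf d-\bfe_j}\bigr)^{\perp},
\]
the orthogonal complement being taken in $S^{d_1}V_1\ootimes S^{d_j-1}V_j\ootimes S^{d_e}V_e$ with respect to the perfect pairing of multidegree $\mathbf d-\bfe_j$. The inclusion ``$\subseteq$'' is a one-line check using that the apolarity action is a module action: for $r\in\Ann(T)_{\mathbf d-\bfe_j}$ and $\alpha\in V_j^\vee$ we get $r\hook(\alpha\hook_j T)=(r\alpha)\hook T=\alpha\hook(r\hook T)=0$. For ``$\supseteq$'' I would observe that the contraction map $\phi_j\colon V_j^\vee\to S^{d_1}V_1\ootimes S^{d_j-1}V_j\ootimes S^{d_e}V_e$, $\alpha\mapsto\alpha\hook_j T$, and the catalecticant $\psi_j\colon\calD_{\mathbf d-\bfe_j}\to V_j$, $r\mapsto r\hook T$, are transpose to one another under these perfect pairings: the very same computation $\langle r,\alpha\hook_j T\rangle=(r\alpha)\hook T=\langle\alpha,r\hook T\rangle$ proves it. Hence $\im\phi_j=(\ker\psi_j)^{\perp}=(\Ann(T)_{\mathbf d-\bfe_j})^{\perp}$. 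Combining this with the elementary identity $(\calD_+\Ann(T))_{\mathbf d}=\sum_{j=1}^e V_j^\vee\cdot\Ann(T)_{\mathbf d-\bfe_j}$ --- valid for any graded ideal, since $\calD$ is generated by the $V_j^\vee$ and any nonzero homogeneous element of $\calD_+$ of multidegree $\le\mathbf d$ factors through some $V_j^\vee$ --- the reformulation above becomes: $G\in W$ if and only if $G$ is annihilated by $(\calD_+\Ann(T))_{\mathbf d}$. Therefore $W=\bigl((\calD_+\Ann(T))_{\mathbf d}\bigr)^{\perp}$.

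It then remains to count dimensions. By perfectness of the multidegree-$\mathbf d$ pairing, $\dim W=\dim\calD_{\mathbf d}-\dim(\calD_+\Ann(T))_{\mathbf d}$; on the other hand the number $\mu$ of minimal generators of $\Ann(T)$ of multidegree $\mathbf d$ equals $\dim(\Ann(T)/\calD_+\Ann(T))_{\mathbf d}=\dim\Ann(T)_{\mathbf d}-\dim(\calD_+\Ann(T))_{\mathbf d}$, since minimal generators of a graded ideal form a $\kk$-basis of $I/\calD_+I$. Subtracting, $\dim W-\mu=\dim\calD_{\mathbf d}-\dim\Ann(T)_{\mathbf d}=\dim\bigl(\calD_{\mathbf d}/\Ann(T)_{\mathbf d}\bigr)$, and this last number is $1$: the map $r\mapsto r\hook T$ identifies $\calD_{\mathbf d}/\Ann(T)_{\mathbf d}$ with $\{r\hook T\mid r\in\calD_{\mathbf d}\}\subseteq\kk$, which is all of $\kk$ because $T\neq0$ and the pairing in multidegree $\mathbf d$ is perfect. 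Hence $\dim W=\mu+1$. The hard part --- really the only step needing care beyond graded bookkeeping --- will be the transpose identity $V_j^\vee\hook_j T=(\Ann(T)_{\mathbf d-\bfe_j})^{\perp}$: one must keep the ``partial-derivative'' normalization of $\hook$ (cf.~\eqref{eq:opAction}) consistent and verify that the contraction map and the catalecticant are genuinely dual with respect to the correct pair of perfect pairings. Everything else is the multigraded incarnation of Macaulay's inverse-systems formalism.
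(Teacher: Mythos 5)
Your argument is correct and follows essentially the same route as the paper's: both identify the space~\eqref{eq:totalSpan} as an orthogonal complement under the apolarity pairing and then count dimensions. The paper instead introduces the auxiliary ideal $I\subseteq\Ann(T)$ generated by all minimal generators of degree $\neq(d_1,\ldots,d_e)$ and shows $W=I^\perp_{\bfd}$; since $I_{\bfd}=(\calD_+\Ann(T))_{\bfd}$, this is the same subspace you use, and your direct dimension count via $\mu=\dim(\Ann(T)/\calD_+\Ann(T))_{\bfd}$ is just a reorganization of the paper's $\dim(\calD/I)_{\bfd}=s+1$.
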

\begin{proof}
    Recall the polynomial ring $\calD = S^{\bullet}(V_{1}^{\vee}\oplus \cdots
    \oplus V_{e}^{\vee})$. Let $\bfd = (d_1, \ldots ,d_e)$. The catalecticant map
    \[
        \calD \to S^{\bullet}(V_1 \oplus  \cdots \oplus V_e)
    \]
    maps $r\in \calD$ to $r\hook T$, so it yields an isomorphism of vector
    spaces $\calD/\Ann(T)$ and $\calD\hook T$. The vector space
    $(\calD/\Ann(T))_{\bfd}$ is one-dimensional. Let $s$ be the number of
    minimal generators of $\Ann(T)$ in degree $\bfd$ and let $I\subseteq
    \Ann(T)$ be generated by all other minimal generators. Then
    $(\calD/I)_{\bfd}$ has dimension $1+s$ and
    $I_{\bfd'} = (\Ann(T))_{\bfd'}$ for every degree $\bfd' < \bfd$, in
    particular for $\bfd' = \bfd - (0,0, \ldots ,0,1,0, \ldots ,0)$, where $1$
    is on an arbitrary position.

    Consider the vector space $I^{\perp} \subseteq S^{\bullet}(V_1 \oplus
    \cdots \oplus V_e)$ defined as $\left\{ G\in S^{\bullet}(V_1 \oplus
    \cdots \oplus V_e)\ |\ I \hook G = 0 \right\}$. Since $I$ has no
    generators of degree $\bfd$, an element $G$ of
    degree $\bfd$ lies in
    this space if and only if $\alpha\hook G$ lies in $I^{\perp}$ for
    every $\alpha\in V_1^{\vee} \oplus  \cdots \oplus V_{e}^{\vee}$. But
    $\alpha\hook G$ is of degree smaller than $\bfd$, so $\alpha\hook
    G$ lies in $I^{\perp}$ if and only if $\Ann(T)\hook \alpha\hook G
    = 0$ if and only if $\alpha\hook G\in \calD\hook T$.
    It follows that the space~\eqref{eq:totalSpan} is equal to
    $I^{\perp}_{\bfd}$, hence has dimension $\dim_{\kk} (\calD/I)_{\bfd} =
    s+1$. This yields the claim.
\end{proof}

\begin{teo}[Centroid description for Segre-Veronese
        format]\label{ref:centroidFromAnn:teo}
        Let $T\in S^{d_1}V_1 \ootimes S^{d_e}V_e$ be concise. Let $\cen{T}$ be
        its centroid. Then the linear map $\cen{T} \to \cen{T}\circ T$ is
        bijective and the space $\cen{T} \circ T\subseteq S^{d_1}V_1
        \ootimes S^{d_e}V_e$ coincides with the space~\eqref{eq:totalSpan}. In
        particular, the dimension of the centroid $\cen{T}$ is given as in
        \autoref{ref:totalSpan:lem}.
\end{teo}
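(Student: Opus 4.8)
The plan is to establish the two claims in turn: first that $\cen{T}\circ T$ coincides with the space in \eqref{eq:totalSpan}, and then that the map $r\mapsto r\circ T$ is injective on $\cen{T}$; the statement about the dimension is then immediate from \autoref{ref:totalSpan:lem}. For the first claim I would argue by two inclusions. Given $r=(X_1,\dots,X_e)\in\cen{T}$, set $G:=r\circ T$; to show $G$ lies in \eqref{eq:totalSpan} I must check that for each $j$ and each $\alpha\in V_j^\vee$ the element $\alpha\hook_j G$ lies in $V_j^\vee\hook_j T$. Here the key computational point is that the apolarity action of $\alpha\in V_j^\vee$ commutes with the centroid action: using \autoref{lemma:simmetriciCompatibili} to replace $X_j\circ_j T$ by (a multiple of) $X_j\hook_j T$, and the fact that $\hook_j$ and $\alpha\hook_j$ act on the same tensor slot as commuting linear operators on $V_j$ (composed with contraction), we get $\alpha\hook_j(X_j\hook_j T)=X_j\hook_j(\alpha\hook_j T)$, which visibly lies in $V_j^\vee\hook_j T$ up to scalars — more precisely one checks $\alpha\hook_j G = \text{(const)}\cdot (X_j^\vee\alpha)\hook_j T$ or an expression of that shape, so it is in the prescribed subspace. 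This direction also uses that $G$ itself is symmetric, i.e. genuinely lies in $S^{d_1}V_1\ootimes S^{d_e}V_e$, which is exactly the second defining condition of the centroid in \eqref{eq:centroids} (and is guaranteed by \autoref{ref:centroidIsSubalgebra:thm} together with \autoref{lemma:simmetriciCompatibili}).

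For the reverse inclusion, suppose $G$ lies in the space \eqref{eq:totalSpan}. Then for each $j$ the linear map $V_j^\vee\to V_j^\vee\hook_j T$ sending $\alpha\mapsto \alpha\hook_j G$, composed with the inverse of the catalecticant-type isomorphism $V_j\xrightarrow{\sim}V_j^\vee\hook_j T$ coming from conciseness of $T$ (this is the content of \autoref{ref:centroidIsSubalgebra:thm} in the reduction form, or the standard fact that $V_j^\vee\hook_j T$ has dimension $\dim V_j$), produces an endomorphism $X_j\in\End(V_j)$ with $X_j\hook_j G'=\alpha\hook_j T$-type relations; I then need to verify that the resulting tuple $(X_1,\dots,X_e)$ is actually compatible with $T$, i.e. $X_i\circ_i T = X_j\circ_j T$ for all $i,j$, and that this common value equals $G$ (up to the normalisation by the $d_j$'s). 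The compatibility follows by testing against all $\alpha$: two symmetric tensors with the same contraction by every $V_j^\vee$ in every slot are equal, because the $V_j^\vee$ generate $\calD$ and $T$ (hence any such tensor) is determined by its image under all first-order contractions when it is concise — this is again the conciseness/apolarity bookkeeping from \autoref{lemma:simmetriciCompatibili}. The bijectivity of $\cen{T}\to\cen{T}\circ T$ is then the easy part: injectivity is exactly the last sentence of \autoref{ref:centroidIsSubalgebra:thm} (for $e\geq 3$, with the symmetric-to-nonsymmetric reduction via \autoref{lemma:simmetriciCompatibili}; the small Veronese cases $d\le 2$ are degenerate or handled separately), and surjectivity is the definition of the image.

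The main obstacle I expect is the careful translation between the three "actions" in play — the Segre-slot action $\circ_j$, the Lie/differential-operator action $\hook_j$, and the apolarity contraction $\alpha\hook_j$ — keeping track of the combinatorial factors $d_j$ and of which tensor is symmetric at which stage. Concretely, the delicate step is proving $\alpha\hook_j(r\circ T)\in V_j^\vee\hook_j T$: one must commute the apolarity contraction past the centroid action, and this is where \autoref{rmk:indexing} and \autoref{lemma:simmetriciCompatibili} do the real work, since naively $\circ_j$ does not even land in the symmetric subspace. Once the actions are correctly reconciled, each inclusion is a short formal argument, and the dimension count is inherited verbatim from \autoref{ref:totalSpan:lem}.
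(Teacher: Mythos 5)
Your overall structure matches the paper's: prove both inclusions between $\cen{T}\circ T$ and the space~\eqref{eq:totalSpan}, get injectivity of $r\mapsto r\circ T$ from \autoref{ref:centroidIsSubalgebra:thm}, and inherit the dimension count from \autoref{ref:totalSpan:lem}. However, the one concrete computation you write down for the forward inclusion is false as stated: the apolarity contraction $\alpha\hook_j$ and the polarization operator $X_j\hook_j$ do \emph{not} commute on $S^{d_j}V_j$. In a basis where $\alpha = \partial_k$ and $X_j = x_i\,\partial_l$, one has $[\partial_k, x_i\partial_l] = \delta_{ik}\partial_l$, so in general
\[
\alpha\hook_j\bigl(X_j\hook_j F\bigr) = X_j\hook_j\bigl(\alpha\hook_j F\bigr) + (\alpha\circ X_j)\hook_j F,
\]
and the extra commutator term $(\alpha\circ X_j)\hook_j F$ is exactly the part that matters. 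You do hedge toward the correct destination formula ($\alpha\hook_j G$ should be a scalar times $(\alpha\circ X_j)\hook_j T$), but the route you propose --- commuting the two operators --- would not get you there cleanly; if anything, the first term $X_j\hook_j(\alpha\hook_j T)$ is the one you would have to argue away, and it is not obviously in $V_j^\vee\hook_j T$.

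The tool the paper uses to sidestep this entirely, and which is absent from your sketch, is the nonsymmetric Euler formula (\autoref{ref:Euler:lem}): for any $G\in S^d V\ot(\cdots)$ one has $d\cdot G = \sum_i x_i\ot(\alpha_i\hook G)$ in $V\ot S^{d-1}V\ot(\cdots)$. Writing $d_j\cdot G = d_j\cdot(X_j\circ_j T)$ and contracting the distinguished $V_j$ slot against $\alpha_i$ on both sides immediately gives $\alpha_i\hook_j G = d_j\,(\alpha_i X_j)\circ_j T \in V_j^\vee\hook_j T$, with no commutators to manage. The reverse inclusion in the paper is the mirror image: write $\alpha_l\hook_j G = \sum_k \mu_{kl}\,\alpha_k\hook_j T$, and Euler's formula reassembles these into $d_j\cdot G = X_j\circ_j T$, which simultaneously shows $G$ is symmetric and that the resulting $(X_1,\ldots,X_e)$ is compatible with $T$; your appeal to a ``catalecticant-type isomorphism $V_j\simeq V_j^\vee\hook_j T$'' is pointing at the right conciseness fact but does not make the compatibility $X_i\circ_i T = X_j\circ_j T$ drop out as directly. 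Also note that $G$ lying in the symmetric subspace is part of the \emph{definition} of $\cen{T}\circ T$ via~\eqref{eq:centroids}, so invoking \autoref{ref:centroidIsSubalgebra:thm} for it is a bit circular --- that theorem is only needed for injectivity of $r\mapsto r\circ T$, which you cite correctly. So: right skeleton and right lemmas for the easy parts, but replace the false commutation claim by the Euler-formula computation, and both inclusions become the short formal arguments you were hoping for.
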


\begin{proof}
    It will be useful to employ both the definition of the centroid given in
    the introduction~\eqref{eq:centroids}.  It coincides with the one given in
    \autoref{ref:centroid:def} by \autoref{lemma:simmetriciCompatibili}.

    The map $\cen{T} \to \cen{T}\circ T$ is surjective by definition and
    injective by \autoref{ref:centroidIsSubalgebra:thm}, hence it is
    bijective. To prove that $\cen{T}\circ T$ coincides with the
    space~\eqref{eq:totalSpan}, we will show both containments.

    Take $r = (X_1, \ldots ,X_e)\in \cen{T}$ and let $G = r\circ T$. Fix an
    index $1\leq j\leq e$, and a basis $V_j = \an{x_1, \ldots ,x_n}$. Let $X_j
    = [\mu_{kl}]$ be the resulting matrix, so that $X_j(x_l) = \sum_{k}
    \mu_{kl}x_k$.
    By the nonsymmetric Euler's formula \autoref{ref:Euler:lem}
    we have
    \[
        {{d}}X_j \circ_j T = {{d}}\cdot G = \sum_{i=1}^n x_i \otimes (\alpha_i \hook_j G).
    \]
    Acting with $\alpha_i$ on both sides (for $i=1, \ldots ,n$), we obtain
    \[
        {{d}}(\alpha_iX_j)\circ_j T = \alpha_i \hook_j G,
    \]
    where $\alpha_iX_j\in V_j^{\vee}$, so that
    so $\alpha_i \hook_j G$ lies in
    $V_j^{\vee} \hook_j T$ for every $i=1, \ldots, n$. This shows that
    $\cen{T}\circ T$ is contained in~\eqref{eq:totalSpan}.
    Let $G$ lie in~\eqref{eq:totalSpan}. Again fix $1\leq j\leq e$ and a basis
    $V_j = \an{x_1, \ldots ,x_n}$. Write $\alpha_l\hook_j G =
    \sum_{k=1}^n \mu_{kl} \alpha_k\hook_j T$ for constants $\mu_{kl}\in
    \kk$. Let $X_j := [\mu_{kl}]$. By nonsymmetric Euler's formula again, we have
    \[
        {d\cdot} G = \sum_{l=1}^n x_l \ot (\alpha_l\hook_j G) = \sum_{l=1}^n x_l \ot
        \left( \sum_{k=1}^n \mu_{kl} \alpha_k \hook_j T \right) =
        \sum_{k,l=1}^n \mu_{kl} x_l \otimes \alpha_k \hook_j T = X_j \circ_j T,
    \]
    where $X_j = [\mu_{lk}]$, in particular ${d\cdot} G = X_j\circ_j T$ lies in
    $S^{d_1}V_1 \ootimes S^{d_e}V_e$. Arguing similarly for other coordinates, we
    obtain a tuple $(X_1, \ldots ,X_e)$ such that $X_j \circ_j T = G$ for
    every $j$. Thus $(X_1, \ldots ,X_e)$ lies in $\cen{T}$ and $G$ lies in
    $\cen{T}\circ T$.
\end{proof}

\begin{rem}
    The exact structure of $\cen{T}$ is easily extracted from the final part
    of the proof of \autoref{ref:centroidFromAnn:teo}. We refrain for
    formulating this explicitly, as the notation seems quite heavy.
\end{rem}

\subsection{Zero-dimensional algebra}
\begin{prop}[Chinese Remainder Theorem]\label{ref:CRT:thm}
    Let $R$ be a finite-dimensional $\kk$-algebra and let $\mfm_1, \ldots
    ,\mfm_n$ be all its maximal ideals. Then we have an isomorphism
    \begin{equation}\label{eq:CRT}
        R  \simeq R_{\mfm_1} \times \cdots \times R_{\mfm_n}.
    \end{equation}
\end{prop}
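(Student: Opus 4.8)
The plan is to deduce this from the basic structure theory of Artinian rings (and, as in the application to $R = \cen{T}$, we take $R$ commutative). Since $R$ is finite-dimensional over $\kk$, every descending chain of ideals stabilises, so $R$ is Artinian; hence it has only finitely many maximal ideals, namely $\mfm_1, \ldots, \mfm_n$, and its Jacobson radical $J = \mfm_1 \cap \cdots \cap \mfm_n$ is nilpotent, say $J^N = 0$. First I would record the standard facts that distinct maximal ideals are comaximal, $\mfm_i + \mfm_j = R$ for $i \neq j$, and that a binomial expansion upgrades this to $\mfm_i^N + \mfm_j^N = R$. Since $\mfm_1 \cdots \mfm_n \subseteq \mfm_1 \cap \cdots \cap \mfm_n = J$, we get
\[
    \mfm_1^N \cdots \mfm_n^N = (\mfm_1 \cdots \mfm_n)^N \subseteq J^N = 0,
\]
and as $\mfm_1^N \cdots \mfm_n^N \subseteq \mfm_1^N \cap \cdots \cap \mfm_n^N$, this forces $\mfm_1^N \cap \cdots \cap \mfm_n^N = 0$.

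Next I would apply the classical ideal-theoretic Chinese Remainder Theorem: for pairwise comaximal ideals $I_1, \ldots, I_n$ the natural map $R \to \prod_j R/I_j$ is surjective with kernel $\bigcap_j I_j$. Taking $I_j = \mfm_j^N$ yields an isomorphism of $\kk$-algebras $R \simeq \prod_{j=1}^n R_j$, where $R_j := R/\mfm_j^N$. Each $R_j$ is Artinian local: the only maximal ideal of $R$ containing $\mfm_j^N$ is $\mfm_j$, so $R_j$ has unique maximal ideal $\bar{\mfm}_j = \mfm_j/\mfm_j^N$.

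It remains to identify $R_i$ with the localization $R_{\mfm_i}$. Let $e_i \in R$ be the idempotent corresponding under the above isomorphism to $(0, \ldots, 1, \ldots, 0)$ with $1$ in slot $i$; then $e_i \notin \mfm_i$ while $e_j \in \mfm_i$ for $j \neq i$. Localizing $R$ at $\mfm_i$ makes $e_i$ a unit, and from $e_i^2 = e_i$ we get $e_i = 1$ in $R_{\mfm_i}$, whence $e_j = e_j e_i = 0$ there for all $j \neq i$. Thus $R \to R_{\mfm_i}$ factors through $R/\bigl(\textstyle\sum_{j \neq i} e_j R\bigr) = R_i$, and the induced map $R_i \to R_{\mfm_i}$ is the localization of the already-local ring $R_i$ at $\bar{\mfm}_i$, hence an isomorphism; so $R_{\mfm_i} \cong R_i$, which combined with $R \simeq \prod_j R_j$ gives~\eqref{eq:CRT}. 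The one genuinely non-formal input is the nilpotency of $J$ — it is here that finite-dimensionality is used, rather than merely the finiteness of the set of maximal ideals — and it is what makes $\bigcap_j \mfm_j^N = 0$ work; the rest is bookkeeping with comaximal ideals and localization. Alternatively, one can simply cite the structure theorem for Artinian rings, which directly packages the product decomposition together with $R_{\mfm_i} \cong R/\mfm_i^{N}$.
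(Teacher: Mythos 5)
The paper states this proposition as a known fact without giving a proof, so there is nothing internal to compare against; your argument is the standard one and it is correct. You (rightly) work in the commutative setting appropriate to $R = \cen{T}$, identify the key nonformal step as the nilpotency of the Jacobson radical coming from the Artinian condition, use it to get $\bigcap_j \mfm_j^N = 0$ with the $\mfm_j^N$ pairwise comaximal, apply the ideal-theoretic CRT, and then cleanly identify the factor $R/\mfm_i^N$ with the localization $R_{\mfm_i}$ via the idempotent argument. This is exactly the structure theorem for Artinian rings, which is what the paper is implicitly invoking.
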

\begin{cor}[Idempotents in the Artinian case]\label{ref:idempotents:cor}
    Let $R$ be a finite-dimensional $\kk$-algebra presented as
    in~\eqref{eq:CRT}. Let $f\in R$. Then the following are equivalent
    \begin{enumerate}
        \item $f^2 = f$, that is, $f$ is an idempotent,
        \item on the right-hand-side of~\eqref{eq:CRT}, the element $f =
            (f_1, \ldots ,f_n)$ satisfies $f_i\in \{0, 1\}$ for every $i=1,
            \ldots ,n$.
    \end{enumerate}
\end{cor}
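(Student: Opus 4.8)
The plan is to reduce immediately to the case of a single factor by exploiting that multiplication in the product $R_{\mfm_1}\times\cdots\times R_{\mfm_n}$ from~\eqref{eq:CRT} is componentwise. Thus $f = (f_1,\ldots,f_n)$ satisfies $f^2 = f$ if and only if $f_i^2 = f_i$ in $R_{\mfm_i}$ for every $i=1,\ldots,n$, so the whole statement follows once we show that a finite-dimensional local $\kk$-algebra has no idempotents besides $0$ and $1$. Here I would note that each $R_{\mfm_i}$ is indeed local, being a localization of the Artinian ring $R$ at a maximal ideal; this is exactly what \autoref{ref:CRT:thm} provides.

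Next I would treat a local $\kk$-algebra $(S,\mfm)$ with an idempotent $f$, rewriting the relation as $f(f-1) = 0$. The key input is that in a commutative local ring the units are precisely the elements outside $\mfm$. If $f\notin\mfm$, then $f$ is a unit, and cancelling it from $f(f-1)=0$ yields $f = 1$. If instead $f\in\mfm$, then $f-1\notin\mfm$ because $1\notin\mfm$, so $f-1$ is a unit, and cancelling it gives $f = 0$. Hence the only idempotents of $S$ are $0$ and $1$, which establishes the implication from~(1) to~(2) after passing back through the product decomposition.

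The converse implication is immediate, since if each $f_i\in\{0,1\}$ then $f_i^2 = f_i$ trivially, hence $f^2 = f$. There is no genuine obstacle in this argument; the only point deserving a moment's care is the identification of the factors $R_{\mfm_i}$ as local rings, which is supplied by \autoref{ref:CRT:thm}. I would also remark in passing that algebraic closedness of $\kk$ is not used and that the local-ring computation works verbatim for an arbitrary commutative local ring, finite-dimensionality entering only through the Chinese Remainder decomposition.
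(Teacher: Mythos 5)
Your argument is correct and is essentially the same as the paper's: reduce to a single local factor via the componentwise multiplication in~\eqref{eq:CRT}, then observe that one of $f_i$, $1-f_i$ must be a unit in the local ring $R_{\mfm_i}$ and cancel it from $f_i(1-f_i)=0$. The only cosmetic difference is that you phrase the dichotomy via ``units are the elements outside $\mfm$'' rather than the paper's ``$f_i + (1-f_i)=1$ forces one of them outside the maximal ideal,'' but these are the same observation.
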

\begin{proof}
    An element $f = (f_1, \ldots ,f_n)$ satisfies $f^2 = f$ if and only $f_i^2
    = f_i\in R_{\mfm_i}$ for every $i=1, \ldots ,n$. Fix an index $i$. Since $f_i + (1-f_i)
    = 1$, it cannot happen that both $f_i$, $1-f_i$ belong to the maximal
    ideal of $R_{\mfm_i}$, hence one of them is invertible. If $f_i$ is
    invertible, then $f_i\cdot (1-f_i) = 0$ implies $1 - f_i = 0$, so $f_i =
    1$. Similarly, if $1-f_i$ is invertible, then $f_i = 0$.
\end{proof}

\section{Direct sums}

    In this section we consider direct sums and prove
    \autoref{refintro:directSum:thm}. Limits of direct sums will come in the
    next section.
    Before giving the proof, let us prove a special case. We keep the proof
    down-to-earth and we hope that it gives the reader the feeling for the
    general case.

\begin{prop}\label{prop:initial_case}
    Let $T\in S^{d_1}V_1\ootimes S^{d_e}V_e$ be a concise tensor. Then the following conditions are equivalent:
\begin{enumerate}
    \item\label{it:initialCaseLocal} the centroid of $T$ is not local,
    \item\label{it:initialCaseIdempotents} there exists a $\kk$-subalgebra of $\cen T$ is isomorphic to
        $\kk\times \kk$,
    \item\label{it:initialCaseIdemMatrix} there exists a non-zero element $(X_1,\dots,X_e)\in\cen T$ such that \[
        (X_1,\dots,X_e)\neq(\Id_{V_1},\dots,\Id_{V_e}),\qquad (X_1,\dots,X_e)^2=(X_1,\dots,X_e);\]
    \item\label{it:initialCaseDirectSum} there exist nonzero subspaces $V_{i,j}\subset V_i$, for $j=1,2$, such that
        $V_i=V_{i,1}\oplus V_{i,2}$,
        and there exist two concise tensors $T_j\in S^{d_1}V_{1,j}\ootimes
        S^{d_e}V_{e,j}$, for $j=1,2$, such that $T=T_1+ T_2$.
\end{enumerate}
\end{prop}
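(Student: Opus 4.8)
The plan is to split the four conditions into the purely algebraic equivalences \eqref{it:initialCaseLocal}$\,\Leftrightarrow\,$\eqref{it:initialCaseIdempotents}$\,\Leftrightarrow\,$\eqref{it:initialCaseIdemMatrix}, which use only that $\cen T$ is a finite-dimensional commutative unital $\kk$-algebra (\autoref{ref:centroidIsSubalgebra:thm}, together with \autoref{lemma:simmetriciCompatibili} to pass to the Segre picture), and the substantive equivalence \eqref{it:initialCaseIdemMatrix}$\,\Leftrightarrow\,$\eqref{it:initialCaseDirectSum}, which carries the tensor content and which I would prove in the ``big'' Segre space.

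For the algebraic part: a subalgebra of $\cen T$ isomorphic to $\kk\times\kk$ contains the image $f$ of $(1,0)$, which is idempotent, nonzero, and distinct from $1_{\cen T}$ (if $f=1_{\cen T}$, then $f$ would act as the identity on the whole subalgebra, yet $(1,0)$ does not fix $(0,1)$ inside $\kk\times\kk$); this yields \eqref{it:initialCaseIdempotents}$\Rightarrow$\eqref{it:initialCaseIdemMatrix}. Conversely, if $f=(X_1,\dots,X_e)$ satisfies $f^2=f$, $f\neq0$, $f\neq 1_{\cen T}$, then $f$ and $1_{\cen T}-f$ are linearly independent with $f(1_{\cen T}-f)=0$, so $\kk f+\kk(1_{\cen T}-f)$ is a subalgebra of $\cen T$ isomorphic to $\kk\times\kk$, giving \eqref{it:initialCaseIdemMatrix}$\Rightarrow$\eqref{it:initialCaseIdempotents}. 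Finally, a local Artinian ring has no idempotents besides $0$ and $1$ (from $f(1-f)=0$ one factor is a unit), so \eqref{it:initialCaseIdemMatrix} forces $\cen T$ non-local; and if $\cen T$ is non-local, then by \autoref{ref:CRT:thm} we may write $\cen T\cong R_1\times\cdots\times R_n$ with $n\geq2$, and $f=(1,0,\dots,0)$ witnesses \eqref{it:initialCaseIdemMatrix} (one may also invoke \autoref{ref:idempotents:cor}). This closes \eqref{it:initialCaseLocal}$\,\Leftrightarrow\,$\eqref{it:initialCaseIdemMatrix}.

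For the heart, \eqref{it:initialCaseIdemMatrix}$\Rightarrow$\eqref{it:initialCaseDirectSum}, the plan is to move to $W=V_1^{\otimes d_1}\otimes\cdots\otimes V_e^{\otimes d_e}$ via the desymmetrization \eqref{eq:desymmetrization}, in which $T$ is again concise and, by \autoref{lemma:simmetriciCompatibili}, $\cen T$ is identified with the Segre centroid of $T\in W$ through repetition $(X_1,\dots,X_e)\mapsto(X_1,\dots,X_1,\dots,X_e,\dots,X_e)$; since multiplication is componentwise composition on both sides, this identification is a $\kk$-algebra isomorphism, so the given idempotent produces an idempotent tuple $(Y_1,\dots,Y_N)$ in $\cen T$ viewed inside $\End(U_1)\times\cdots\times\End(U_N)$, where $N=d_1+\cdots+d_e$ and $U_1,\dots,U_N$ are the factors of $W$. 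Each $Y_k$ is then a projection; set $U_{k,1}=\im Y_k$, $U_{k,2}=\ker Y_k$, decompose $W=\bigoplus_{\epsilon\in\{1,2\}^N}\bigotimes_k U_{k,\epsilon_k}$ and write $T=\sum_\epsilon T_\epsilon$. Since $Y_k$ is the identity on $U_{k,1}$ and kills $U_{k,2}$, one gets $Y_k\circ_k T=\sum_{\epsilon:\,\epsilon_k=1}T_\epsilon$, so the compatibility equalities $Y_1\circ_1 T=\dots=Y_N\circ_N T$, together with directness of the decomposition, force $T_\epsilon=0$ whenever $\epsilon$ is non-constant; hence $T=T_{(1,\dots,1)}+T_{(2,\dots,2)}$. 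The $Y_k$ in the $i$-th block all equal $X_i$, so there $U_{k,1}=\im X_i=:V_{i,1}$ and $U_{k,2}=\ker X_i=:V_{i,2}$, and the two summands lie in $\bigotimes_i V_{i,1}^{\otimes d_i}$ and $\bigotimes_i V_{i,2}^{\otimes d_i}$; these subspaces are stable under the block-wise symmetrization, so comparing with the symmetrization of $T$ shows that both summands are symmetric, i.e.\ $T_1:=T_{(1,\dots,1)}\in S^{d_1}V_{1,1}\otimes\cdots\otimes S^{d_e}V_{e,1}$ and likewise $T_2$. The subspaces are nonzero because $X_i\neq0$ and $X_i\neq\Id_{V_i}$ (otherwise $(X_1,\dots,X_e)$ would be $0$ or $\Id$, by injectivity of the projection $\cen T\to\End(V_i)$ from \autoref{ref:centroidIsSubalgebra:thm}), and $T_1,T_2$ are concise because if, say, $T_1\in S^{d_1}V_1'\otimes\cdots$ with $V_1'\subsetneq V_{1,1}$, then $T\in S^{d_1}(V_1'\oplus V_{1,2})\otimes\cdots$ contradicts conciseness of $T$.

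For the converse \eqref{it:initialCaseDirectSum}$\Rightarrow$\eqref{it:initialCaseIdemMatrix}, given $T=T_1+T_2$ with $V_i=V_{i,1}\oplus V_{i,2}$, let $X_i$ be the projection of $V_i$ onto $V_{i,1}$ along $V_{i,2}$; then $X_i\circ_i T_1=T_1$ and $X_i\circ_i T_2=0$, so $X_i\circ_i T=T_1$ for all $i$, whence $(X_1,\dots,X_e)\in\cen T$, and it is idempotent, nonzero (as $V_{i,1}\neq0$) and not $\Id$ (as $V_{i,2}\neq0$). The step I expect to be the main obstacle is precisely \eqref{it:initialCaseIdemMatrix}$\Rightarrow$\eqref{it:initialCaseDirectSum}: the partial action $\circ_i$ in Segre–Veronese format does not preserve symmetry (\autoref{example:action}), so one cannot simply decompose each $S^{d_i}V_i$ into eigenspaces of $X_i$ and argue in place; passing to $W$ via \autoref{lemma:simmetriciCompatibili} is what makes the bookkeeping go through, and the vanishing of the mixed components $T_\epsilon$ genuinely uses the entire system of compatibility relations, not merely $X_i^2=X_i$, after which recovering symmetry, conciseness, and nonvanishing of the summands is routine.
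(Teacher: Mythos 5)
Your proof is correct and follows essentially the same path as the paper's. The algebraic equivalences $\eqref{it:initialCaseLocal}\Leftrightarrow\eqref{it:initialCaseIdempotents}\Leftrightarrow\eqref{it:initialCaseIdemMatrix}$ are carried out by CRT and idempotents just as in the paper, and the heart $\eqref{it:initialCaseIdemMatrix}\Leftrightarrow\eqref{it:initialCaseDirectSum}$ uses, as the paper does, that the coordinates of a centroid idempotent are projections and that the compatibility equalities force $T$ to split along the induced subspace decompositions.

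One point of genuine difference, in your favour: the paper's proof of $\eqref{it:initialCaseIdemMatrix}\Rightarrow\eqref{it:initialCaseDirectSum}$ is written with notation $V_1\otimes\cdots\otimes V_d$ as if in the pure Segre format, where the conclusion $T_1\in V_{1,1}\ootimes V_{d,1}$ follows from a straight intersection $\bigcap_i\bigl(V_1\ot\cdots\ot V_{i,1}\ot\cdots\ot V_d\bigr)$. In true Segre--Veronese format this intersection is not $\bigotimes_j S^{d_j}V_{j,1}$ unless one further invokes symmetry, and the paper glosses over this. You handle it explicitly: you pass to the desymmetrized space via \autoref{lemma:simmetriciCompatibili}, decompose $W=\bigoplus_\epsilon\bigotimes_k U_{k,\epsilon_k}$, use the \emph{full} set of compatibility relations (one per position, not one per $V_j$) to kill all non-constant $T_\epsilon$, and then recover symmetry of $T_1,T_2$ from invariance of $\bigotimes_i V_{i,1}^{\ot d_i}$ and $\bigotimes_i V_{i,2}^{\ot d_i}$ under block-wise permutations. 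You also argue conciseness of the summands, which the paper asserts without comment. So the approach is the same, but your writeup is more careful at exactly the place where the Segre--Veronese format matters.
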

\begin{proof}
    To see the equivalence of~\eqref{it:initialCaseLocal}
    and~\eqref{it:initialCaseIdempotents} it is enough to use the Chinese Remainder
    Theorem, see \autoref{ref:CRT:thm}.

    To prove $\eqref{it:initialCaseIdempotents}\Rightarrow
    \eqref{it:initialCaseIdemMatrix}$, let $\varphi\colon \kk\times \kk
    \hookrightarrow \cen T$ an embedding of $\kk$-algebras and $(X_1,\dots,X_d)\coloneqq \varphi(0,1)$. Then
    \[
        (X_1,\dots,X_d)^2=\bigl(\varphi(0,1)\bigr)^2=\varphi\bigl((0,1)^2\bigr)=\varphi(0,1)=(X_1,\dots,X_d).
    \]
    Conversely, to obtain $\eqref{it:initialCaseIdemMatrix}\Rightarrow
    \eqref{it:initialCaseIdempotents}$, it is enough to note that the
    subalgebra of $\cen T$ generated by $(\Id_{V_1},\dots,\Id_{V_d})$ and
    $(X_1,\dots,X_d)$ is isomorphic to $\kk\times\kk$.

Let us prove $\eqref{it:initialCaseIdemMatrix}\Rightarrow\eqref{it:initialCaseDirectSum}$. Since
\[
(X_1^2,\dots,X_d^2)=(X_1,\dots,X_d)^2=(X_1,\dots,X_d),
\]
the endomorphisms $X_1,\dots,X_d$ are idempotent, that is, they are projections. Now, let us set
$Y_i\coloneqq\Id_{V_i}- X_i$,
and
$V_{i,1}\coloneqq\im X_i$, $V_{i,2}\coloneqq\im Y_i$, for every $i=1,\dots,d$.
Then, we have
$V_i=V_{i,1}\oplus V_{i,2}$.
Now, note that
\begin{gather*}
X_i\circ_i T\in V_1\otimes \cdots\otimes V_{i-1}\otimes V_{i,1}\otimes V_{i+1}\otimes \dots\otimes V_d,\\
Y_i\circ_i T\in V_1\otimes \cdots\otimes V_{i-1}\otimes V_{i,2}\otimes V_{i+1}\otimes \dots\otimes V_d.
\end{gather*}
Since $(X_1,\dots,X_d),(Y_1,\dots,Y_d)\in\cen T$, then
\[
T_1\coloneqq X_i\circ_i T\in V_{1,1}\ootimes V_{d,1},\qquad T_2\coloneqq Y_i\circ_i T\in V_{1,2}\ootimes V_{d,2}.
\]
Finally, we can write $T$ as
$$T=\Id_{V_i}\circ_i T=(X_i+Y_i)\circ_i T=T_1+T_2.$$
$\eqref{it:initialCaseDirectSum}\Rightarrow \eqref{it:initialCaseIdemMatrix}$. If we set $X_i$ the projection to $V_{i,1}$ for any $i=1,\dots,d$, then $(X_1,\cdots,X_d)\in\cen T$ and $(X_1,\cdots,X_d)^2=(X_1,\cdots,X_d)$.
\end{proof}

In the special case of $T$ fully symmetric, we obtain the following.
\begin{cor}
Let $F\in\kk[x_1,\cdots,x_m]_d$ a concise homogeneous polynomial of degree $d$. The following conditions are equivalent:
\begin{enumerate}
\item there exists a subalgebra of $\cen{F}$ which is isomorphic, as a
    $\kk$-algebra, to $\kk\times\kk$;
\item up to reordering the coordinates, there exists an integer $1\leq k\leq
    m$, such that $F=F_1+F_2$ where, $F_1\in\kk[x_1,\dots,x_k]_d$ and
    $F_2\in\kk[x_{k+1},\dots,x_m]_d$.
\end{enumerate}
\end{cor}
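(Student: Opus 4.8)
The plan is to obtain this corollary as the Veronese case $e = 1$ of \autoref{prop:initial_case}, taking $V_1 = V = \langle x_1, \ldots, x_m\rangle$ and $d_1 = d$, so that $\cen F$ in the sense of the introduction is exactly the centroid appearing there. Under this specialisation, condition~(1) of the corollary is verbatim condition~\eqref{it:initialCaseIdempotents} of \autoref{prop:initial_case}, so the whole task reduces to matching the intrinsic statement~\eqref{it:initialCaseDirectSum} --- a direct sum decomposition $V = V_{1,1}\oplus V_{1,2}$ into nonzero subspaces with concise $T_j\in S^d V_{1,j}$ and $F = T_1 + T_2$ --- with the coordinate statement~(2).

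For the implication \eqref{it:initialCaseDirectSum}$\Rightarrow$(2), I would set $k = \dim V_{1,1}$ (necessarily $1\le k\le m-1$, since both $V_{1,j}$ are nonzero), pick a basis $x_1, \ldots, x_k$ of $V_{1,1}$ and $x_{k+1}, \ldots, x_m$ of $V_{1,2}$, and read off that, after this linear change of coordinates, $F_1 := T_1\in\kk[x_1, \ldots, x_k]_d$ and $F_2 := T_2\in\kk[x_{k+1}, \ldots, x_m]_d$, which is exactly~(2).

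For the converse, starting from $F = F_1 + F_2$ with $F_1\in\kk[x_1, \ldots, x_k]_d$ and $F_2\in\kk[x_{k+1}, \ldots, x_m]_d$ both nonzero, I would put $V_{1,1} = \langle x_1, \ldots, x_k\rangle$ and $V_{1,2} = \langle x_{k+1}, \ldots, x_m\rangle$, so that $V = V_{1,1}\oplus V_{1,2}$. The only point that needs a word of justification --- and the only content beyond unwinding definitions --- is that $F_1$ and $F_2$ are then automatically concise in their respective variables: if $F_1$ did not involve some $x_i$ with $i\le k$, then $\partial F/\partial x_i = \partial F_1/\partial x_i + \partial F_2/\partial x_i = 0$, contradicting conciseness of $F$, and symmetrically for $F_2$; in particular both summands are nonzero and $1\le k\le m-1$. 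Hence \eqref{it:initialCaseDirectSum} holds and \autoref{prop:initial_case} yields~(1). I do not anticipate any genuine obstacle here: the corollary is a faithful transcription of the $e = 1$ instance of \autoref{prop:initial_case}, and the only thing to watch is this translation between a direct-sum decomposition of the vector space $V$ and a splitting of the variable set, together with the inheritance of conciseness by the two summands.
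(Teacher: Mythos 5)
Your overall strategy is exactly the intended one: the paper states the corollary without proof as the immediate $e=1$ specialisation of \autoref{prop:initial_case}, and the translation you carry out between the intrinsic decomposition $V=V_{1,1}\oplus V_{1,2}$ of~\eqref{it:initialCaseDirectSum} and the coordinate statement~(2) is the right bookkeeping. You also correctly recognise that ``up to reordering the coordinates'' in the corollary really means ``after a linear change of coordinates'' adapted to the decomposition, and that one should read ``both $F_1$ and $F_2$ nonzero'' (equivalently $1\le k\le m-1$) into~(2) for the equivalence to be true.

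There is, however, one genuine gap in the converse direction. Condition~\eqref{it:initialCaseDirectSum} requires $F_1$ and $F_2$ to be \emph{concise} in the sense of \autoref{ref:concise:def}, i.e.\ there must be no proper subspace $V_{1,j}'\subsetneq V_{1,j}$ with $F_j\in S^d V_{1,j}'$. What your argument shows is only that each individual variable $x_i$ with $i\le k$ appears in $F_1$ (the partial $\partial F_1/\partial x_i$ is nonzero): that is strictly weaker than conciseness, as $F_1=(x_1+x_2)^d$ already illustrates, where both partials are nonzero yet $F_1$ lies in $S^d\langle x_1+x_2\rangle$. The fix is to run the same contraction argument with an arbitrary linear form rather than a single coordinate dual: if $\ell\in\langle\alpha_1,\dots,\alpha_k\rangle$ is nonzero and $\ell\hook F_1=0$, then also $\ell\hook F_2=0$ because $F_2$ involves only $x_{k+1},\dots,x_m$, whence $\ell\hook F=0$, contradicting the conciseness of $F$; symmetrically for $F_2$. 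Alternatively, you can sidestep conciseness of the summands entirely in the direction you are proving: the projection $X$ onto $\langle x_1,\dots,x_k\rangle$ along $\langle x_{k+1},\dots,x_m\rangle$ satisfies $X\circ_i F=F_1$ for every $i$, so $(X,\dots,X)$ is a nontrivial idempotent in $\cen F$, which gives~\eqref{it:initialCaseIdemMatrix} directly without invoking~\eqref{it:initialCaseDirectSum}. Either repair closes the gap; as written the inheritance-of-conciseness step is asserted but not proved.
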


We stress that finding summands of direct sums in this way is very effective.
We give an easy explicit example to illustrate the method.
\begin{exam}
Let $V_1=V_2=V_3=\bbC^2$, $(a_1,a_2)$ the canonical basis of $V_1$, $(b_1,b_2)$ the canonical basis of $V_2$ and $(c_1,c_2)$ the canonical basis of $V_3$. Given
$$T=a_1\otimes b_1\otimes c_1+a_1\otimes b_2\otimes c_2+a_2\otimes b_1\otimes c_2+a_2\otimes b_2\otimes c_1\in V_1\otimes V_2\otimes V_3,$$
we want now to compute $\cen T$. In order to do that let us consider three generic endomorphisms $X\in\End(V_1),Y\in\End(V_2),Z\in\End(V_3)$ whose matrices with respect to the canonical bases are
$$X=\begin{pmatrix}x_{11} & x_{12}\\ x_{21} & x_{22}
\end{pmatrix}, \quad Y=\begin{pmatrix}y_{11} & y_{12}\\ y_{21} & y_{22}\end{pmatrix} \quad Z=\begin{pmatrix}z_{11} & z_{12}\\ z_{21} & z_{22}
\end{pmatrix}.$$
We have
\begin{align*}
X\circ_1 T&=(x_{11}a_1+x_{21}a_2)\otimes b_1\otimes c_1+(x_{11}a_1+x_{21}a_2)\otimes b_2\otimes c_2+(x_{12}a_1+x_{22}a_2)\otimes b_1\otimes c_2\\
&\hphantom{{}={}}+(x_{12}a_1+x_{22}a_2)\otimes b_2\otimes c_1\\
Y\circ_2 T&=a_1\otimes (y_{11}b_1+y_{21}b_2)\otimes c_1+a_1\otimes (y_{12}b_1+y_{22}b_2)\otimes c_2+a_2\otimes (y_{11}b_1+y_{21}b_2)\otimes c_2\\
&\hphantom{{}={}}+a_2\otimes (y_{12}b_1+y_{22}b_2)\otimes c_1\\
Z\circ_3 T&=a_1\otimes b_1\otimes (z_{11}c_1+z_{21}c_2)+a_1\otimes b_2\otimes (z_{12}c_1+z_{22}c_2)+a_2\otimes b_1\otimes (z_{12}c_1+z_{22}c_2)\\
&\hphantom{{}={}}+a_2\otimes b_2\otimes (z_{11}c_1+z_{21}c_2)
\end{align*}
By imposing $X\circ_1 T=Y\circ_2 T=Z\circ_3 T$ we get
$$\begin{cases}
x_{11}=y_{11}=z_{11},\\
x_{12}=y_{12}=z_{21},\\
x_{12}=y_{21}=z_{12},\\
x_{11}=y_{22}=z_{22},\\
x_{21}=y_{12}=z_{12},\\
x_{22}=y_{11}=z_{22},\\
x_{22}=y_{22}=z_{11},\\
x_{21}=y_{21}=z_{21},
\end{cases}\Longleftrightarrow\quad
\begin{cases}
x_{11}=y_{11}=z_{11}=x_{22}=y_{22}=z_{22},\\
x_{12}=y_{12}=z_{12}=x_{21}=y_{21}=z_{21}.
\end{cases}
$$
As a consequence, we get
\begin{align*}
\cen T&=\Set{(X,Y,Z)\in\End(V_1)\times\End(V_2)\times\End(V_3)| X=Y=Z=\begin{pmatrix} s & t \\ t & s
\end{pmatrix}\text{ for some }s,t\in\bbC}\\
&=\Biggl\langle\Biggl(\begin{pmatrix} 1 & 0 \\ 0 & 1
\end{pmatrix},\begin{pmatrix} 1 & 0 \\ 0 & 1
\end{pmatrix},\begin{pmatrix} 1 & 0 \\ 0 & 1
\end{pmatrix}\Biggr),\Biggl(\begin{pmatrix} 0 & 1 \\ 1 & 0
\end{pmatrix},\begin{pmatrix} 0 & 1 \\ 1 & 0
\end{pmatrix},\begin{pmatrix} 0 & 1 \\ 1 & 0
\end{pmatrix}\Biggr)\Biggr\rangle.
\end{align*}
Now, we consider the idempotent elements of $\cen T$
$$(X_1,X_2,X_3)=\dfrac{1}2\Biggl(\begin{pmatrix} 1 & 1 \\ 1 & 1
\end{pmatrix},\begin{pmatrix} 1 & 1 \\ 1 & 1
\end{pmatrix},\begin{pmatrix} 1 & 1 \\ 1 & 1
\end{pmatrix}\Biggr),$$
$$(Y_1,Y_2,Y_3)=(\Id_{V_1},\Id_{V_2},\Id_{V_3})-(X_1,X_2,X_3)=\dfrac{1}2\Biggl(\begin{pmatrix} 1 & -1 \\ -1 & 1
\end{pmatrix},\begin{pmatrix} 1 & -1 \\ -1 & 1
\end{pmatrix},\begin{pmatrix} 1 & -1 \\ -1 & 1
\end{pmatrix}\Biggr)$$
and we follow the proof of \Cref{prop:initial_case} to decompose the tensor $T$. We have
$$V_{1,1}=\langle a_1+a_2\rangle,\quad V_{1,2}=\langle a_1-a_2\rangle$$
and similarly for $B_{1,1},B_{1,2}$ and $C_{1,1},C_{1,2}$. In order to compute $T_1\in V_{1,1}\otimes B_{1,1}\otimes C_{1,1}$ and $T_2\in V_{1,2}\otimes B_{1,2}\otimes C_{1,2}$ such that $T=T_1+T_2$ it is enough to apply $X_1$ and $Y_1$ to $T$. In fact, we have
\begin{align*}
T_1=X_1\circ_1 T&=\frac12(a_1+a_2)\otimes( b_1\otimes c_1+b_2\otimes c_2+b_1\otimes c_2+b_2\otimes c_1)\\[1ex]
&=\frac12(a_1+a_2)\otimes(b_1+b_2)\otimes(c_1+c_2),\\[1ex]
T_2=Y_1\circ_1 T&=\frac12(a_1-a_2)\otimes( b_1\otimes c_1+b_2\otimes c_2-b_1\otimes c_2-b_2\otimes c_1)\\[1ex]
&= \frac12(a_1-a_2)\otimes(b_1-b_2)\otimes(c_1-c_2).
\end{align*}
In particular, we have
$$
T=\frac12\bigl((a_1+a_2)\otimes(b_1+b_2)\otimes(c_1+c_2)+(a_1-a_2)\otimes(b_1-b_2)\otimes(c_1-c_2)\bigr).
$$
\end{exam}

We are now ready to prove the general case of
\autoref{refintro:directSum:thm}.
    \begin{proof}[Proof of \autoref{refintro:directSum:thm}]
        First, let $R = \cen{T}$ and write a decomposition $R =
        R_{\mfm_1}\times \cdots \times R_{\mfm_n}$ as in \autoref{ref:CRT:thm}. Let
        $f_i := (0,0, \ldots ,0,1,0, \ldots ,0)\in R$, where $1$ is on the
        $i$-th coordinate.

        The elements $f_1, \ldots ,f_n\in \cen{T}$ are in particular acting on
        each $V_1, \ldots ,V_m$.  For every $j=1, \ldots ,m$, let $V_{j,i} :=
        f_i\cdot V_j \subset V_j$. Since $f_i\cdot f_j = 0$ for $i\neq j$ and
        $f_i^2 = f_i$, we obtain that
        \[
            V_j = V_{j,1}\oplus \cdots \oplus V_{j,n}
        \]
        for every $j=1, \ldots ,m$.

        Let $T_i := f_i\cdot T$. Then $T = 1\cdot T = (f_1+ \cdots +f_n) \cdot
        T = T_1 +  \cdots + T_n$. Moreover, for every $1\leq j\leq m$, the
        element $f_i\cdot T$ can be viewed as obtained by the action on the
        $j$-th coordinate, hence $T_i = f_i\cdot T\in V_1\ootimes V_{j-1}\ot
        V_{j,i}\ootimes V_e$. Intersecting over every $j$, we obtain that
        \[
            T_i \in V_{1,i}\ot V_{2,i}\ootimes V_{e, i}
        \]
        for every $i=1, \cdots ,n$. This yields the desired direct sum.
        Conciseness of $T$ implies the conciseness of $T_i$.
        To compute the centroid of $T_i$ for some $1\leq i\leq n$, observe
        that $(\cen{T})_{\mfm_i} =
        f_i\cen{T}$ as an algebra, so $(\cen{T})_{\mfm_i}$ is contained in
        $\cen{T_i}$. Conversely, if $r\in \cen{T_i}$, then $(0, \ldots 0,
        r,0, \ldots ,0)$ is in $\cen{T}$, hence $r\in (\cen{T})_{\mfm_i}$.

        To prove that every other direct sum comes from grouping together
        factors of this one, take a direct sum $T = T' + T''$.
        \autoref{prop:initial_case} implies that there is an element $f\in
        \cen{T}$ such that $f\cdot T = T'$ and $(1-f)\cdot T = T''$ and $f^2
        =f$. \autoref{ref:idempotents:cor} implies that $f = f_{i_1} + \cdots
        + f_{i_s}$ for some indices $1\leq i_1 < i_2< \cdots < i_s\leq e$. It
        follows that $T' = T_{i_1}+ \cdots +T_{i_s}$ and so the two-factor
        direct sum comes by grouping together the factors of $T = T_1 +
        \cdots + T_n$. For more factors, we obtain the same claim by induction.
    \end{proof}

\section{The irreducible case}

\autoref{refintro:directSum:thm} allows to reduce to tensors which are not
direct sums, say \emph{irreducible tensors}. In this section we consider them.
We begin with the Segre case, which will immediately imply the general Segre-Veronese
case.
\begin{rem}\label{rem:Jordan}
Recall that, given a vector space $V$ and a nilpotent endomorphism $L\in\End(V)$ with nilpotency index $n$, it is possible to find a decomposition of $V$
{$$V=\bigoplus_{0\leq r<q\leq n}V^{(q,r)}$$}
such that
$$\Ker L^j=V^{(n,n-j)}\oplus V^{(n-1,n-j-1)}\oplus\dots\oplus V^{(j,0)}\oplus\Ker L^{j-1}$$
and $L(V^{(q,r)})=V^{(q,r+1)}$ for any $r=1,\dots,q-1$, so that the action is
as on \Cref{fig:Jordan}. Such a decomposition is called a \textit{Jordan decomposition} and a collection of bases $\bfx^{(q,r)}=(x^{(q,r)}_1,\dots,x^{(q,r)}_{t_q})$ of $V^{(q,r)}$ such that $L(x^{(q,r)}_s)=x^{(q,r+1)}_s$ for any $r=0,\dots,q-1$ is called a \textit{Jordan basis}.
\end{rem}
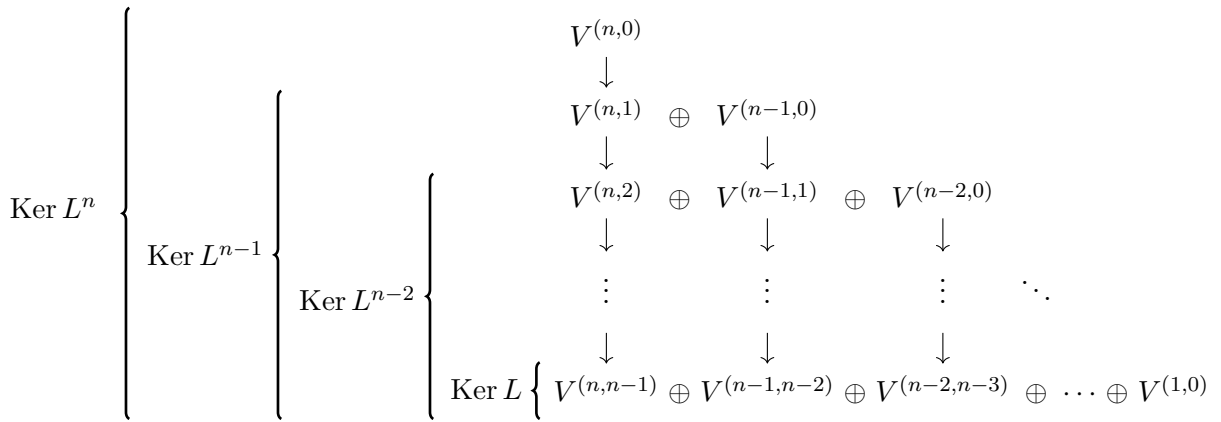
\begin{figure}[h]
\[
\begin{tikzpicture}[baseline={(0,0)}]
  \draw[decorate,decoration={brace},line width=1pt]  (-10, -2.75) node[shift={(-0.7cm,0.4cm)}] {$\Ker L$}  -- (-10, -2);
  \draw[decorate,decoration={brace},line width=1pt]  (-11.4, -2.75) node[shift={(-1cm,1.6cm)}] {$\Ker L^{n-2}$}  -- (-11.4, 0.5);
  \draw[decorate,decoration={brace},line width=1pt]  (-13.4, -2.75) node[shift={(-1cm,2.2cm)}] {$\Ker L^{n-1}$}  -- (-13.4, 1.6);
   \draw[decorate,decoration={brace},line width=1pt]  (-15.4, -2.75) node[shift={(-1cm,2.8cm)}] {$\Ker L^{n}$}  -- (-15.4, 2.7);
  \end{tikzpicture}
\begin{tikzcd}[row sep=1pc,column sep=-0.5pc]
 V^{(n,0)}\arrow{d}\\
 V^{(n,1)}\arrow{d}&\oplus&V^{(n-1,0)}\arrow{d}\\
 \tikzmark{bracebegin}V^{(n,2)}\arrow{d}&\oplus&V^{(n-1,1)}\arrow{d}&\oplus&V^{(n-2,0)}\arrow{d}\\
 \vdots\vphantom{\Bigl(} \arrow{d} &     & \vdots \vphantom{\Bigl(} \arrow{d}    &           &  \vdots\vphantom{\Bigl(}\arrow{d} &\ddots \\
 \tikzmark{braceend}V^{(n,n-1)}&\oplus&V^{(n-1,n-2)}&\oplus&V^{(n-2,n-3)}&\oplus&\cdots&\oplus& V^{(1,0)}
\end{tikzcd}
\]
\caption{Diagram of a Jordan decomposition}
\label{fig:Jordan}
\end{figure}
\begin{teo}[normal form for irreducible tensors]\label{teo: epsilon}
Let $T\in V_1\otimes \cdots\otimes V_e$ be a concise tensor, and
\[
    (L_1,\dots,L_e)\in\End(V_1)\times\cdots\times\End(V_e).
\]
For a positive natural number $n\geq 1$, the following conditions are equivalent:
\begin{enumerate}
    \item\label{it:epsilon:subalgebra} $(L_1,\dots,L_e)$ belongs to $\cen T$ and the subalgebra of $\cen T$
    generated by $(L_1,\dots,L_e)$ is isomorphic, as a $\kk$-algebra, to $\kk[\eps]/(\eps^n)$;
\item\label{it:epsilon:localForm} there exist tensors $T_1,\dots,T_n$ with
$$T_k\in\bigotimes_{i=1}^eL_i^{k-1}(\Ker L_i^k)$$
such that, for any $E_i$ such that $V_i=E_i\oplus\Ker L_i$, $T$ can be written as
\begin{equation}\label{formula:eqM}
T=\sum_{k=1}^n\sum_{\delta_1+\cdots+\delta_e=k-1}M_1^{\delta_1}\circ_1\cdots\circ_{e-1}M_e^{\delta_e}\circ_e T_{k},
\end{equation}
where $M_i$ is the inverse of the map $L_i\colon E_i \to {\im L_i}$.
 \item\label{it:epsilon:bases} for any $i=1,\dots,e$, there exists a decomposition
 $$V_i=\bigoplus_{\substack{q=1,\dots,n\\r=0,\dots,q-1}}V_i^{(q,r)},$$
with $\dim V_i^{(q,r)}=t_{i}^{(q)}$, such that, for any collection of bases $\bfx_i^{(q,r)}$ of $V_{i}^{(q,r)}$ such that $L(\bfx_i^{(q,r)})=\bfx_i^{(q,r+1)}$ for any  $r=0,\dots,q-2$ and $L(\bfx_i^{(q,q-1)})=0$, there exist $n$ tensors $T_{1},\dots,T_{n}$ with
$$T_{k}\in\bigotimes_{i=1,\dots,e}\bigoplus_{q=k}^n {\langle\bfx_i^{(q,q-1)}\rangle}$$
such that
    \begin{equation}\label{formula:Tk_explicit}
    T=\sum_{\substack{1\leq k\leq n\\ \delta_1+\cdots+\delta_e=k-1}}\sum_{\substack{k\leq q_i\leq n\\ 1\leq s_i\leq t_{i}^{(q_i)}}}T_{k}\biggl(\bigotimes_{i=1}^e\alpha_{i,s_i}^{(q_i,q_i-1)}\biggr)\bigotimes_{i=1}^e x_{i,s_i}^{(q_i,q_i-\delta_i-1)}
    \end{equation}
    where $\bfalpha_i^{(q,r)}$ is the dual basis of $\bfx_i^{(q,r)}$.
\end{enumerate}
\end{teo}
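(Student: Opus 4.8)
\emph{Plan.} The real content is the implication \eqref{it:epsilon:subalgebra}$\,\Rightarrow\,$\eqref{it:epsilon:bases}; I would treat \eqref{it:epsilon:localForm} and \eqref{it:epsilon:bases} as essentially the same statement and reduce everything to \eqref{it:epsilon:subalgebra}$\,\Leftrightarrow\,$\eqref{it:epsilon:bases}. The first step is to observe that, by injectivity of the projections $\cen T\to\End(V_i)$ (\autoref{ref:centroidIsSubalgebra:thm}), a tuple $r=(L_1,\dots,L_e)\in\cen T$ satisfies $r^m=0$ iff $L_i^m=0$ for one, hence for every, $i$; so $r$ and each $L_i$ have the same nilpotency index, and \eqref{it:epsilon:subalgebra} just says that $(L_1,\dots,L_e)\in\cen T$ and every $L_i$ is nilpotent of index exactly $n$. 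I then fix, for each $i$, a Jordan decomposition $V_i=\bigoplus_{0\le r<q\le n}V_i^{(q,r)}$ with a Jordan basis (\autoref{rem:Jordan}) and the adapted complement $E_i=\bigoplus_q\bigoplus_{r\le q-2}V_i^{(q,r)}$ of $\Ker L_i$, with projection $P_i\colon V_i\twoheadrightarrow\Ker L_i$ along $E_i$. Here $P_i$ kills every $V_i^{(q,r)}$ with $r<q-1$, $M_i$ acts by $x^{(q,r)}_{i,s}\mapsto x^{(q,r-1)}_{i,s}$, and $\bigoplus_{q\ge k}\langle\bfx_i^{(q,q-1)}\rangle=L_i^{k-1}(\Ker L_i^k)=\Ker L_i\cap\im L_i^{k-1}$; with these identifications \eqref{it:epsilon:bases} is exactly \eqref{it:epsilon:localForm} written in the chosen bases with $E_i$ adapted, and the case of a general complement in \eqref{it:epsilon:localForm} would follow from a separate, routine check that the right-hand side of \eqref{formula:eqM} is independent of the choice of the $E_i$.

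\emph{The main implication.} To prove \eqref{it:epsilon:subalgebra}$\,\Rightarrow\,$\eqref{it:epsilon:bases}, I would decompose $T=\sum_{\vec q,\vec s}T_{\vec q,\vec s}$ into multihomogeneous pieces $T_{\vec q,\vec s}\in\bigotimes_iV_i^{(q_i,s_i)}$ and attach to each the \emph{layer} $\ell(\vec q,\vec s):=\sum_i(q_i-1-s_i)\ge0$, then set $T_k:=(P_1\otimes\cdots\otimes P_e)(r^{k-1}\circ T)$. Because $r^{k-1}\circ T=L_i^{k-1}\circ_i T$ has its $i$-th tensor slot in $\im L_i^{k-1}$ for each $i$, this tensor only involves Jordan blocks of size $\ge k$ in every coordinate, and $P_i$ sends that slot into $\Ker L_i\cap\im L_i^{k-1}=L_i^{k-1}(\Ker L_i^k)$, so $T_k\in\bigotimes_iL_i^{k-1}(\Ker L_i^k)$. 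The crucial identity is that, since $L_i\circ_i(r^j\circ T)=r^{j+1}\circ T$ for all $i,j$, one has $r^{k-1}\circ T=L_1^{\delta_1}\circ_1\cdots\circ_{e-1}L_e^{\delta_e}\circ_e T$ for \emph{every} $\delta_1+\cdots+\delta_e=k-1$. Taking the $\bigotimes_iV_i^{(q_i,q_i-1)}$-component of this with $\delta_i:=q_i-1-s_i$: the left side is intrinsic, on the right only $T_{\vec q,\vec s}$ survives and is carried isomorphically onto $\bigotimes_iV_i^{(q_i,q_i-1)}$ by $L_1^{\delta_1}\otimes\cdots\otimes L_e^{\delta_e}$. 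From this I get (a) if $\ell(\vec q,\vec s)\ge q_j$ for some $j$ then $T_{\vec q,\vec s}=0$, since $r^{\ell}\circ T=L_j^{\ell}\circ_j T$ has $j$-th slot in $\im L_j^{\ell}$, which contains no nonzero vector of $V_j^{(q_j,q_j-1)}$ when $q_j\le\ell$; and (b) if $q_i>\ell(\vec q,\vec s)$ for all $i$, then with $k=\ell(\vec q,\vec s)+1$ the operator $M_1^{\delta_1}\otimes\cdots\otimes M_e^{\delta_e}$ recovers $T_{\vec q,\vec s}$ from the matching component of $T_k$. Since $T_k$ affects only layer $k-1$, summing over all multidegrees yields \eqref{formula:Tk_explicit}. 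I would note along the way that conciseness, unused for the formula itself, forces $t_i^{(n)}>0$ and $T_n\ne0$, because the top vectors $\bfx_i^{(n,0)}$ must occur in $T$ and by the formula can only come from $T_n$.

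\emph{The converse.} For \eqref{it:epsilon:bases}$\,\Rightarrow\,$\eqref{it:epsilon:subalgebra} I would compute directly with \eqref{formula:Tk_explicit}: applying $L_i$ to the $i$-th slot lowers $\delta_i$ by $1$ and deletes the $\delta_i=0$ terms (whose $i$-th slot is in $L_i^{k-1}(\Ker L_i^k)\subseteq\Ker L_i$), and after reindexing $\delta_i\mapsto\delta_i+1$ the outcome no longer depends on $i$, so $L_1\circ_1 T=\dots=L_e\circ_e T$ and $(L_1,\dots,L_e)\in\cen T$; the only care needed is that the boundary terms that would break this never appear, as $T_k$ lives on blocks of size $\ge k$ so $\delta_i\le k-1\le q_i-1$. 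Blocks of size $\le n$ give $L_i^n=0$, hence $r^n=0$, while $r^{n-1}\circ T=L_1^{n-1}\circ_1 T$ is the image of $T_n$ under $\bigotimes_iV_i^{(n,n-1)}\hookrightarrow\bigotimes_iV_i$ (all other summands vanish); conciseness makes $T_n\ne0$, so $r^{n-1}\ne0$ by \autoref{ref:centroidIsSubalgebra:thm}, and with $r^n=0$ this gives $\kk[\eps]/(\eps^n)$ as the subalgebra generated by $r$.

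\emph{Expected obstacle.} The difficulty sits entirely in \eqref{it:epsilon:subalgebra}$\,\Rightarrow\,$\eqref{it:epsilon:bases}, and specifically in squeezing everything out of the compatibility relations $L_i\circ_i T=L_j\circ_j T$: the identity $r^{k-1}\circ T=L_1^{\delta_1}\circ_1\cdots L_e^{\delta_e}\circ_e T$ valid for all splittings of $k-1$ is what simultaneously makes $T_k$ well defined and produces the vanishing in (a). Morally these relations turn the space of centroid-compatible tensors into a direct sum of cyclic $\kk[\eps]/(\eps^n)$-modules with the $T_k$ as cyclic generators, and the computation above is the down-to-earth form of that statement; I expect the genuinely delicate part to be the bookkeeping with multidegrees, Jordan blocks, and the ranges $0\le\delta_i\le q_i-1$. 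The only other loose end is the easy lemma that the right-hand side of \eqref{formula:eqM} is independent of the complements $E_i$, used to pass from \eqref{it:epsilon:bases} to \eqref{it:epsilon:localForm}.
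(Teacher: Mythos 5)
Your proof is correct and the central implication $\eqref{it:epsilon:subalgebra}\Rightarrow\eqref{it:epsilon:bases}$ is done by a route genuinely different from the paper's. The paper proceeds by induction on $n$: it sets $T'_n:=L_1^{n-1}\circ_1 T$, subtracts off $T':=\sum_{\delta}M^{\delta}\circ T'_n$, observes that $(L_1^{n-1},\dots,L_e^{n-1})$ annihilates $T-T'$, and applies the induction hypothesis. You instead define all the $T_k:=(P_1\otimes\cdots\otimes P_e)(r^{k-1}\circ T)$ at once and verify the formula component-by-component via the layer function $\ell(\vec q,\vec s)=\sum_i(q_i-1-s_i)$, with your observations (a) and (b) doing the work. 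The identity $r^{k-1}\circ T=L_1^{\delta_1}\circ_1\cdots\circ_{e-1}L_e^{\delta_e}\circ_e T$ for every $\sum\delta_i=k-1$ is correctly extracted from the compatibility relations, and the injectivity of $L_1^{\delta_1}\otimes\cdots\otimes L_e^{\delta_e}$ on $\bigotimes_i V_i^{(q_i,s_i)}$ turns the layer argument into a clean proof. What this buys you is that each $T_k$ is defined by an explicit closed formula rather than recursively, which makes the structure of the decomposition (and the role of conciseness in forcing $T_n\neq0$) more transparent; the paper's induction is shorter to write but less explicit. The converse $\eqref{it:epsilon:bases}\Rightarrow\eqref{it:epsilon:subalgebra}$ and the passage $(2)\Leftrightarrow(3)$ are essentially the same as in the paper.

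One remark on a loose end you flag as ``routine'': the right-hand side of \eqref{formula:eqM} is \emph{not} independent of the complements $E_i$. Already for $n=e=2$ with $V_i=V_i^{(2,1)}\oplus V_i^{(2,0)}$, replacing $E_1=V_1^{(2,0)}$ by a tilted complement $E_1'=\{x+\phi_1(x)\}$ changes $M_1$ to $M_1+\phi_1M_1$ and shifts the sum by $(\phi_1M_1)\circ_1 T_2$, which does not vanish for generic $T_2$. This is a small imprecision in the statement's quantifier ``for any $E_i$'' rather than a flaw in your argument: your own construction $T_k=(P_1\otimes\cdots\otimes P_e)(r^{k-1}\circ T)$ has $P_i$, and hence $T_k$ for $k<n$, depending on $E_i$ in exactly the way needed, so the correct reading is ``for each choice of $E_i$ there exist $T_k$ (given by your formula)'', and the check you call routine should be dropped, not supplied.
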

\begin{proof}
$\eqref{it:epsilon:localForm}\Rightarrow\eqref{it:epsilon:subalgebra}$. We
start by showing that $(L_1,\dots,L_e)\in\cen T$. For every $i=1, \ldots ,e$,
the composition $L_i\circ_i M_i \circ_i (-)$ is the
identity mapping on $\im L_i$. For clarity of notation, let us take $i=1$. We have
\begin{align*}
L_1\circ_1 T&=\sum_{k=1}^n\sum_{\delta_1+\cdots+\delta_e=k-1}L_1\circ_1 (M_1^{\delta_1})\circ_1 M_2^{\delta_2}\circ_2 \cdots\circ_{e-1}M_e^{\delta_e}\circ_e T_k\\
& = \sum_{k=1}^n\sum_{\delta_2+\cdots+\delta_e=k-1}M_2^{\delta_2}\circ_2 \cdots\circ_{e-1}M_e^{\delta_e}\circ_e(\underbrace{L_1\circ_1T_{k}}_{=0}))\\
&+\sum_{k=1}^n\sum_{\substack{\delta_1\geq 1\\\delta_1+\cdots+\delta_e=k-1}}M_1^{\delta_1-1}\circ_1 M_2^{\delta_2}\circ_2\cdots\circ_{e-1}M_e^{\delta_e}\circ_e T_k\\
& =\sum_{k=2}^n\sum_{\delta_1+\cdots+\delta_e=k-2}M_1^{\delta_1}\circ_1 \cdots\circ_{e-1}M_e^{\delta_e}\circ_e T_k.
\end{align*}
The final result does not distinguish the index $1$ in any way, so we obtain
the same expression starting from $L_2\circ_2 T$ etc., so
$L_1\circ_1 T =  \cdots = L_e\circ_e T$.
Hence, $(L_1,\dots,L_e)$ lies in $\cen T$.

Since the operators $L_i^n$ are zero, we have $(L_1, \ldots ,L_n)^n =0$. To
prove that $L_1^{n-1}\circ_1 T$ is nonzero, we argue as in the displayed
equations
above, this time acting with $L_1^{n-1}$, and get
\begin{equation}\label{eq:section}
    L_1^{n-1}\circ_1 T = \sum_{\delta_1=n-1,\ \delta_2 =
    \cdots = \delta_e = 0} (L_1^{n-1}\circ_1 M_1^{n-1})\circ_1 T_{n} = T_n,
\end{equation}
The $(V_1^{(n,0)}\otimes V_2\otimes\cdots\otimes V_e)$-component of $T$
is $M_1^{n-1}\circ_1 T_n$ and $T$ is concise, so $T_n\neq 0$. This shows
that $(L_1, \ldots ,L_{e})^{n-1}$ is a nonzero element of the centralizer and
so the subalgebra of $\cen{T}$ generated by $(L_1, \ldots ,L_e)$ is isomorphic
to $\kk[\eps]/(\eps^n)$.

    $\eqref{it:epsilon:subalgebra}\Rightarrow\eqref{it:epsilon:localForm}$.
    We use induction with respect to $n$. The base case, $\eps = 1$,
    implies that $L_1 = 0$, $L_2 = 0$, \ldots ,$L_e = 0$ and the implication
    is trivial.

    For the induction step, take a tensor $T$ as
    in~\eqref{it:epsilon:subalgebra} and let
    \begin{equation}\label{eq:sectionPrim}
        T'_n := L_1^{n-1} \circ_1 T = L_2^{n-1}\circ_2 T = \cdots =
        L_e^{n-1} \circ_e T.
    \end{equation}
    It follows that $T'_n \in L_1^{n-1}(V_1) \otimes L_2^{n-1}(V_2) \otimes
    \cdots \otimes L_e^{n-1}(V_e)$ and we can thus take the tensor
    \[
        T' =
        \sum_{\delta_1+\cdots+\delta_e=n-1}M_1^{\delta_1}\circ_1\cdots\circ_{e-1}M_e^{\delta_e}\circ_e
        T'_{n}.
    \]

    Using the implication~$\eqref{it:epsilon:localForm}\Rightarrow
    \eqref{it:epsilon:subalgebra}$ we learn that $(L_1, \ldots ,L_e)$ is in
    $\cen{T}$. Being compatible is a linear condition on the tensor, hence
    $(L_1, \ldots ,L_e)$ lies also in the centraliser of $T - T'$.
    By~\eqref{eq:section} and~\eqref{eq:sectionPrim}, we learn that
    $L_1^{n-1}\circ_1 T = T_n' = L_1^{n-1}\circ T'$. This implies that
    $(L_1^{n-1}, \ldots ,L_{e}^{n-1})$ annihilates $T - T'$ and we can apply
    the induction. By induction, the
    tensor $T - T'$ has the form~\eqref{formula:eqM}. Also $T'$ has this form,
    by construction, hence $T = (T - T') + T'$ also has the required form.

$\eqref{it:epsilon:localForm}\Leftrightarrow\eqref{it:epsilon:bases}$.
We start by proving the left to right implication.
    For any $i=1,\dots,e$, since $L_i$ is a nilpotent endomorphism with nilpotency index $n$, by \autoref{rem:Jordan} $V_i$ can be decomposed as
$$V_i=\bigoplus_{\substack{q=1,\dots,n\\r=0,\dots,q-1}}V_i^{(q,r)},$$
with $V_i^{(q,r)}=L^r(V_i^{(q,0)})$ and
$$\Ker L_i^j=V_i^{(n,n-j)}\oplus V_i^{(n-1,n-j-1)}\oplus\dots\oplus V_i^{(j,0)}\oplus\Ker L_i^{j-1}.$$
Let $\bfx_i^{(q,r)}$ a basis of $V_i^{(q,r)}$ such that $(x_i^{(q,r)})_{q,r}$ is a Jordan basis of $V_i$. By construction, we get
$$L_i^{k-1}(\Ker L_i^k)=\bigoplus_{q=k}^n{\langle\bfx_i^{(q,q-1)}\rangle}$$
and, for any $k,\delta_1,\dots,\delta_e\in\bbN$ with $1\leq k\leq n$ and $\delta_1+\dots+\delta_e=k-1$, we have
$$M_1^{\delta_1}\circ_1\cdots\circ_{e-1}M_e^{\delta_e}\circ_e \bigotimes_{i=1}^ex_{i,s_i}^{(q_i,q_i-1)}=\bigotimes_{i=1}^ex_{i,s_i}^{(q_i,q_i-\delta_i-1)}$$
for any $q_i=k,\dots,n$ and $s_i=1,\dots,t_i^{(q_i)}$.
From this equality, we get the equality of \autoref{formula:Tk_explicit}. To obtain the other implication, it is enough to reverse the argument.
\end{proof}
The following corollary, is an immediate consequence of \autoref{lemma:simmetriciCompatibili} and \autoref{teo: epsilon}. Before stating it, we need to introduce some notation.
\begin{cor}\label{cor: simmetrici epsilon}
Let $F\in S^d V$ be a concise homogeneous polynomial of degree $d$. The
following are equivalent:
\begin{enumerate}
    \item\label{it:symmetric:subalgebra} there exists a subalgebra of $\cen{F}$ which isomorphic to
        $\kk[\eps]/(\eps^n)$,
    \item\label{it:symmetric:form} there is a decomposition $V = \bigoplus_{1\leq q\leq n,\, 0\leq
            r\leq q-1}V^{(q, r)}$ of vector spaces and fixed isomorphisms
            {$$V^{(q, q-1)}\to V^{(q,q-2)} \to \cdots \to V^{(q, 0)},$$} such that
            there exist
            homogeneous polynomials
            $F_1, \ldots ,F_n$ with { $$F_k\in S^d \biggl(\bigoplus_{k\leq q\leq
            n} V^{(q,q-1)}\biggr)$$} such that
            \begin{equation}\label{eq:summationBody}
                F = \sum_{k=1}^n \sum_{\nu_1 + 2\nu_2 + \cdots + (n-1)\nu_{n-1} = k-1}
                \frac{1}{\nu_1! \nu_2! \cdots \nu_{n-1}!}D_1^{\nu_1}
                D_2^{\nu_2}
                \cdots D_{n-1}^{\nu_{n-1}} \hook F_k,
            \end{equation}
            where $D_i$ is the differential operator that is induced by the
            map $V^{(q, q-1)}\to V^{(q, q-1-i)}$, see~\eqref{eq:opAction}.
    \item\label{it:symmetric:coordinates} there is a basis $x^{(q, r)}_{i}$ of $V$ indexed by $1\leq r < q\leq
            n$ and $i=1, \ldots , t^{(q)}$ and homogeneous polynomials
            $F_1, \ldots ,F_n$ of degree $d$ such that for every $1\leq k\leq
            n$ we have
            \[
                F_{k}\in\kk[\bfx^{(q,q-1)}\;|\; q=k,\dots,n],
            \]
            where $\bfx^{(q, r)} := \left(
            x_1^{(q,r)},\dots,x_{t^{(q)}}^{(q,r)} \right)$, such that
            \[
                F=\sum_{k=1}^n \sum_{\nu_1 + 2\nu_2 \cdots + (n-1)\nu_{n-1} =
            k-1}
            \left( \sum_{q=k}^n \sum_{i=1}^{t^{(q)}} x_i^{(q, q-2)}\frac{\partial}{\partial
                x_{i}^{(q, q-1)}} \right)^{\nu_1}\hook \cdots \hook
            \left( \sum_{q=k}^n \sum_{i=1}^{t^{(q)}} x_i^{(q, q-1-n)}\frac{\partial}{\partial
                x_{i}^{(q, q-1)}} \right)^{\nu_n}\hook F_k.
            \]
\end{enumerate}

\end{cor}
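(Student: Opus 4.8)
The plan is to deduce the corollary from the Segre-format result \autoref{teo: epsilon}, applied to $F$ viewed inside $V^{\ot d}$ via the symmetrisation~\eqref{eq:symmetric}, using \autoref{lemma:simmetriciCompatibili} to pass between the two descriptions of the centroid. Throughout I assume $d\ge 3$, so that \autoref{lemma:simmetriciCompatibili} is available; the cases $d\le 2$ are degenerate (for $d=1$, conciseness forces $\dim V=1$) and can be handled directly. First I would record that $F$ stays concise in $V^{\ot d}$: were $F$ contained in $V\ot\cdots\ot V'\ot\cdots\ot V$ with one proper factor $V'\subsetneq V$, then, $F$ being symmetric, the same $V'$ works in every factor, so $F\in (V')^{\ot d}\cap S^dV=S^dV'$, contradicting the conciseness of $F$ as a polynomial. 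By \autoref{lemma:simmetriciCompatibili}, an element of the centroid of $F\in V^{\ot d}$ (a $d$-tuple) is automatically diagonal, $(Y,\dots,Y)$, and, via the diagonal embedding, the $\kk$-algebra of such tuples is identified with $\cen{F}$ in the sense of~\eqref{eq:centroids}. Hence a subalgebra of $\cen{F}$ isomorphic to $\kk[\eps]/(\eps^n)$ corresponds to one generated by a single diagonal nilpotent $(L,\dots,L)$ with $L\in\End(V)$ nilpotent of index $n$ — exactly hypothesis~\eqref{it:epsilon:subalgebra} of \autoref{teo: epsilon} with $e=d$, $V_1=\cdots=V_e=V$ and $L_1=\cdots=L_e=L$. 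This gives the equivalence of~\eqref{it:symmetric:subalgebra} with that specialisation.

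Next I would translate conditions~\eqref{it:epsilon:localForm} and~\eqref{it:epsilon:bases} of \autoref{teo: epsilon} into~\eqref{it:symmetric:form} and~\eqref{it:symmetric:coordinates}. Since all $L_i=L$, the Jordan decompositions of the $V_i$ coincide and produce a single decomposition $V=\bigoplus_{0\le r<q\le n}V^{(q,r)}$; the inverses $M$ of $L$ on the Jordan blocks provide the fixed isomorphisms $V^{(q,q-1)}\to V^{(q,q-2)}\to\cdots\to V^{(q,0)}$, and the operator $D_i$ of~\eqref{eq:summationBody} is the differential operator attached to the composite $V^{(q,q-1)}\to V^{(q,q-1-i)}$. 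I would then check that the tensors $T_1,\dots,T_n$ furnished by \autoref{teo: epsilon} are symmetric, hence are polynomials $F_k\in S^d\bigl(\bigoplus_{q\ge k}V^{(q,q-1)}\bigr)$: by~\eqref{eq:section} one has $T_n=L^{n-1}\circ_1 F$, and every element of the $\kk[\eps]/(\eps^n)$-subalgebra lies in $\cen{F}$, so acting by it preserves symmetry (\autoref{lemma:simmetriciCompatibili}); thus $L^{j}\circ_1 F$ is symmetric for all $j$, the correction term $\sum_{\delta_1+\cdots+\delta_e=n-1}M^{\delta_1}\circ_1\cdots\circ_{e-1}M^{\delta_e}\circ_e T_n$ is symmetric (being a sum over all slot orderings of the action on a symmetric tensor), and an induction on $n$ following the proof of \autoref{teo: epsilon} propagates symmetry to every $T_k$.

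The remaining point, and the one requiring real bookkeeping, is the matching of the two normal forms. Directly from the definition~\eqref{eq:opAction} of $\hook$, on a symmetric tensor $T$ one has $\sum_{j=1}^d X\circ_j T=X\hook T$ under~\eqref{eq:symmetric}; the operators $D_i$ lower the degree in the top variables $x^{(q,q-1)}$ by one and output non-top variables, so they commute with one another and satisfy $D_i^{\nu_i}\hook T_k=\nu_i!\sum_{|S|=\nu_i}\bigl(\text{shift the slots of }S\text{ by }i\bigr)T_k$, with the subsets for distinct $i$ automatically disjoint since a once-shifted slot is no longer of top type. Hence
\[
    \frac{1}{\nu_1!\cdots\nu_{n-1}!}\,D_1^{\nu_1}\cdots D_{n-1}^{\nu_{n-1}}\hook T_k
    =\sum_{\substack{\delta_1+\cdots+\delta_d=\nu_1+2\nu_2+\cdots+(n-1)\nu_{n-1}\\ \#\{\,j:\delta_j=i\,\}=\nu_i\text{ for all }i}}M^{\delta_1}\circ_1\cdots\circ_{e-1}M^{\delta_e}\circ_e T_k,
\]
and summing over all $(\nu_1,\dots,\nu_{n-1})$ with $\nu_1+2\nu_2+\cdots+(n-1)\nu_{n-1}=k-1$ turns the inner sum of~\eqref{formula:eqM} into the inner sum of~\eqref{eq:summationBody}. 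This yields the equivalence of~\eqref{it:epsilon:localForm} with~\eqref{it:symmetric:form}; performing the same reorganisation on~\eqref{formula:Tk_explicit} relative to a Jordan basis gives the coordinate form~\eqref{it:symmetric:coordinates}, and collecting the equivalences completes the proof. The main obstacle is precisely this combinatorial identification — tracking the factorial coefficients and distinguishing the action on top versus lower graded pieces when passing from $\circ_j$ to $\hook$ — together with the verification that the $T_k$ are symmetric; everything else is a direct specialisation of \autoref{teo: epsilon} via \autoref{lemma:simmetriciCompatibili}.
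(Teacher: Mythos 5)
Your proposal is correct and follows essentially the same route as the paper's proof: reduce to \autoref{teo: epsilon} via the symmetrisation embedding, use \autoref{lemma:simmetriciCompatibili} to force the centroid element to be diagonal $(L,\dots,L)$, observe that each $T_k$ is then a symmetric tensor (hence a polynomial $F_k$), and match \eqref{formula:eqM} with \eqref{eq:summationBody} by counting the orderings of $\delta_{\bullet}$. The only difference is cosmetic: where you carry out the multinomial bookkeeping abstractly on slots, the paper reduces to $T_k=\ell^d$ and compares the two expansions explicitly, but the underlying combinatorial identity is the same.
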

\begin{proof}
    To prove
    $\eqref{it:symmetric:subalgebra}\Rightarrow\eqref{it:symmetric:form}$ we
    use \autoref{teo: epsilon} and its notation, in particular $(L_1, \ldots
    ,L_d)$ and $(M_1, \ldots ,M_d)$. We view $F$ as a symmetric tensor
    by~\eqref{eq:symmetric}. Using
    \autoref{lemma:simmetriciCompatibili} we learn that $L_1 =  \cdots = L_d =
    L$ and $M_1 =  \cdots = M_d$.

    By \autoref{teo: epsilon}, $F$ can be written as in
    \autoref{formula:Tk_explicit}. The tensor $T_n$ appearing there is
    obtained by action of $L^{n-1}$ on $F$, hence is symmetric. The other
    tensors $T_{n-1}, \ldots ,T_1$ are obtained similarly from
    \emph{polynomials} obtained by subtracting polynomials from $F$, hence
    $T_{n-1}, \ldots ,T_1$ are symmetric as well. We
    denote the corresponding polynomials as $F_1, \ldots ,F_n$.

    In remains to identify the action~\eqref{formula:eqM}
    with~\eqref{eq:summationBody} for every fixed $1\leq k\leq n$. By our global assumption, the
    characteristic of $\kk$ is zero or greater than $d$. Hence, the space $S^d
    \bigoplus_{i\leq q\leq n} V^{(q,q-1)}$ is spanned by $\ell^{d}$ where
    $\ell$ ranges over the elements of $\bigoplus_{i\leq q\leq n}
    V^{(q,q-1)}$.

    The polynomial $\ell^d$ identifies with the symmetric tensor $\ell
    \ootimes \ell$. The summation
    \[
        \sum_{\delta_1+\cdots+\delta_d=k-1}M^{\delta_1}\circ_1\cdots\circ_{d-1}M^{\delta_d}\circ_d
        \ell^{\otimes d}
    \]
    yields $\sum_{\delta_1+\cdots+\delta_d=k-1} M^{\delta_1}(\ell) \cdot
    \cdots \cdot M^{\delta_d}(\ell)$, since the right-hand-side is the
    symmetrization of the result and the result is already symmetric. The
    summation still remembers the order of $\delta_{\bullet}$ even though the
    result does not depend on it. Now we rearrange the summation: we forget about the order and instead
    group the possible $\delta_{\bullet}$ according to the sequence
    $(\nu_1,\nu_2, \ldots ,\nu_{n-1})$, where $\nu_i = |\left\{j
    \ |\ \delta_j = i\right\}|$ remembers how many times
    $i$ appears in $\delta_{\bullet}$. For a given $(\nu_1, \ldots
    ,\nu_{n-1})$, the number of possible $\delta_{\bullet}$ is given by a
    multinomial coefficient and so we obtain
    \[
        \sum_{\nu_1 + 2\nu_2 +  \cdots + (n-1)\nu_{n-1} = k-1}
            \frac{d!}{\nu_1! \cdots \nu_d!(d-\sum_{i=1}^{n-1}
    \nu_i)!}(M^{1}(\ell))^{\nu_1} \cdots
    (M^{n-1}(\ell))^{\nu_{n-1}}
    \]
    The differential operator $D_1^{\nu_1} \cdots
    D_{n-1}^{\nu_{n-1}}$ applied to $\ell^d$ yields
    \[
        d(d-1) \cdots \biggl(d+1-\sum_{i=1}^{n-1} \nu_i\biggr) \bigl(M^{1}(\ell)\bigr)^{\nu_1} \cdots
        \bigl(M^{n-1}(\ell)\bigr)^{\nu_{n-1}} = \frac{d!}{(d-\sum_{i=1}^{n-1}
    \nu_i)!}\bigl(M^{1}(\ell)\bigr)^{\nu_1}  \cdots
        \bigl(M^{n-1}(\ell)\bigr)^{\nu_{n-1}}
    \]
    Comparing the two displayed equations, we obtain the
    desired~\eqref{eq:summationBody}. Reserving the argument, we obtain
    $\eqref{it:symmetric:form}\implies\eqref{it:symmetric:subalgebra}$. To
    prove~$\eqref{it:symmetric:form}\Leftrightarrow\eqref{it:symmetric:coordinates}$,
    we just express the operators $D_i$ in coordinates.
\end{proof}

\begin{example}
    Let us make a very concrete example. Fix $n = 3$, $V = V^{(3, 2)} \oplus
    V^{(3, 1)} \oplus V^{(3, 0)}$ with operator $M$ yielding isomorphisms
    $V^{(3, 2)}\to V^{(3, 1)}$ and $V^{(3, 1)}\to V^{(3, 0)}$. Additionally,
    assume that $V^{(3, 2)}$ is spanned by $x^{(3, 2)}$. Take $F_2 =
    (x^{(3, 2)})^3$. The expression in \autoref{cor: simmetrici
    epsilon} yields
    \[
        3 (x^{(3, 2)})^2 x^{(3, 0)} + \frac{1}{2}\cdot 6 x^{(3, 2)}(
        x^{(3, 1)}
        )^2.
    \]
\end{example}

\section{Limits of direct sums}

In this section we deduce that all tensors with a subalgebra
$\kk[\eps]/(\eps^n)$ in the centroid are limits of direct sums with $n$
factors. This result is quite unexpected, since it yields an easy way of
constructing interesting such limits.

\begin{prop}\label{prop:direct_sum_limit}
    Let $T\in S^{d_1}V_1\otimes\cdots\otimes S^{d_e}V_e$ be a concise tensor. If there exists a
subalgebra of $\cen{T}$ isomorphic to $\kk[\eps]/(\eps^n)$, then $T$ is a
limit of direct sums of the form $T^{(1)} +  \cdots + T^{(n)}$.
\end{prop}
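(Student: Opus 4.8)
The plan is to reduce to the Segre format and then degenerate the nilpotent part of the centroid to a semisimple one, carrying $T$ along. First I would use \autoref{lemma:simmetriciCompatibili}: a subalgebra $\kk[\eps]/(\eps^n)\subseteq\cen T$ gives a compatible tuple $(L_1,\dots,L_e)$ whose components are nilpotent of index $n$, and a direct-sum decomposition produced in the Segre picture that comes from subspace decompositions $V_i=\bigoplus_k V_{i,k}$ automatically respects the partial symmetry, hence descends to the Segre--Veronese format. So assume $T$ is Segre; fix Jordan decompositions $V_i=\bigoplus_{q,r}V_i^{(q,r)}$ of each $L_i$ as in \autoref{rem:Jordan}, and record from \autoref{teo: epsilon} the normal form
\[
 T=\sum_{k=1}^n\ \sum_{\delta_1+\cdots+\delta_e=k-1} M_1^{\delta_1}\circ_1\cdots\circ_e M_e^{\delta_e}\circ_e T_k,\qquad T_k\in\textstyle\bigotimes_i W_i^{(k)},\quad W_i^{(k)}:=\bigoplus_{q\ge k}V_i^{(q,q-1)}.
\]
Since $W_i^{(k)}$ depends only on $L_i$, the proof of that theorem in fact exhibits this linear parametrization by the core tuple $(T_1,\dots,T_n)$ as a description of \emph{all} tensors compatible with $(L_1,\dots,L_e)$, so that space has dimension $N:=\sum_{k=1}^n\prod_i\dim W_i^{(k)}$.

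Next, for a parameter $\lambda$ I would put $L_i(\lambda):=L_i+\lambda D_i$, with $D_i$ diagonal in the Jordan basis and $D_i|_{V_i^{(q,r)}}=(q-1-r)\Id$. Then $L_i(\lambda)$ is triangular with diagonal entries $0,\lambda,\dots,(n-1)\lambda$, so for $\lambda\neq0$ it is semisimple with eigenvalues precisely $0,\lambda,\dots,(n-1)\lambda$ (all of them occur, as a block of size $n$ is present), whence $\kk[L_i(\lambda)]\cong\kk^{\,n}$, while $L_i(0)=L_i$. The eigenprojections of $L_i(\lambda)$ are idempotents in the centroid of any compatible tensor, so by \autoref{refintro:directSum:thm} every tensor compatible with $(L_1(\lambda),\dots,L_e(\lambda))$ is a direct sum $\sum_{j=0}^{n-1}T_\lambda^{(j)}$ with $T_\lambda^{(j)}\in\bigotimes_i E_i^{(j)}(\lambda)$, the product of the $j\lambda$-eigenspaces, and one checks $\dim E_i^{(j)}(\lambda)=\dim W_i^{(j+1)}$, so this space of compatible tensors again has dimension $N$. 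Hence the locus of compatible tensors is a vector subbundle of the trivial bundle over $\mathbb{A}^1_\lambda$; its total space is irreducible, its fibre over $\lambda\neq0$ consists of direct sums, and its fibre over $\lambda=0$ is the space of tensors compatible with $(L_1,\dots,L_e)$, which contains $T$. Therefore $T$ lies in the closure of the set of direct sums. Moreover every $E_i^{(j)}(\lambda)$ is nonzero, so the general compatible tensor over $\lambda\neq0$ is a direct sum with all $n$ summands nonzero; since $T$ lies in the closure of this dense open locus of the irreducible family, $T$ is a limit of direct sums with $n$ summands.

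The step I expect to be the real work is the constancy of $N$ across the family — equivalently, writing the limiting family $T_\lambda$ explicitly. One expresses the eigenvectors of $L_i(\lambda)$ as combinations of the Jordan basis with $\lambda^{-1}$-coefficients based at the kernel vectors, and then must choose Laurent-in-$\lambda$ ``cores'' in each eigenspace so that the divergent contributions cancel and the limit is exactly $T$; this is controlled by the filtration of the normal form by the total degree $\delta_1+\cdots+\delta_e$. As a sanity check, for $n=2$ the recipe produces the closed form $T_\lambda=\bigl(T_1-\lambda^{-1}T_2\bigr)+\lambda^{-1}(\Id+\lambda M_1)\circ_1\cdots\circ_e(\Id+\lambda M_e)\circ_e T_2$, visibly a direct sum of two generically nonzero summands with $T_\lambda\to T$; the general case follows the same pattern with heavier bookkeeping.
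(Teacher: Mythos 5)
Your proof is correct, and it takes a genuinely different route from the paper's. The paper proves the statement constructively: after choosing distinct scalars $\omega_1,\dots,\omega_n$ and solving a Vandermonde system for coefficients $\alpha_{k,j}$, it builds explicit direct-sum families $T^{(j)}(\lambda)=\sum_{k\ge j}\lambda^{n-k}\alpha_{k,j}\,\widetilde M_{1,j}(\lambda)\circ_1\cdots\circ_e\widetilde M_{e,j}(\lambda)\circ_e T_k$ and verifies by direct computation that $\lambda^{-(n-1)}\sum_j T^{(j)}(\lambda)\to T$ and that the ranges of the $\widetilde M_{i,j}(\lambda)$ are independent. Your argument replaces all of this by a semicontinuity principle: deform $(L_1,\dots,L_e)$ along $L_i(\lambda)=L_i+\lambda D_i$ to a semisimple tuple, observe that the locus of tensors compatible with $(L_1(\lambda),\dots,L_e(\lambda))$ is a vector subbundle of constant rank $N$ over $\mathbb A^1_\lambda$, that its total space is irreducible, and that the fibre over generic $\lambda$ consists of direct sums with $n$ nonzero summands. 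The paper's proof has the advantage of producing the limiting family in closed form; yours trades explicitness for conceptual clarity and shorter bookkeeping.

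Two remarks to tighten the argument. First, the step you flag as ``the real work'' --- the constancy of $N$ --- is in fact already established, not a gap: the proof of \autoref{teo: epsilon}$\,(1)\Rightarrow(2)$ is an induction that never uses conciseness nor the full nondegeneracy $(L_1,\dots,L_e)^{n-1}\ne 0$, only $L_i^n=0$ and compatibility, and \eqref{eq:section} shows the resulting parametrization by $(T_1,\dots,T_n)$ is injective. So the $\lambda=0$ fibre has dimension exactly $N$, and you computed the $\lambda\ne 0$ fibre has dimension $\sum_{j}\prod_i\dim E_i^{(j)}(\lambda)=N$ as well; there is no need to exhibit $T_\lambda$ explicitly, since a point in the closure of a constructible set over an algebraically closed field is a limit along a curve. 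Second, you should not cite \autoref{refintro:directSum:thm} to decompose the $\lambda\neq 0$ fibre: that theorem is stated for concise tensors, and the tensors in the fibre need not be concise. The correct (and elementary) argument is that the spectral projections $P_{i,j}$ of $L_i(\lambda)$ are polynomials in $L_i(\lambda)$, and compatibility $A\circ_1 S=B\circ_2 S$ propagates to $p(A)\circ_1 S=p(B)\circ_2 S$ for any polynomial $p$, whence $(P_{1,j},\dots,P_{e,j})$ is compatible with $S$ and $S=\sum_j P_{1,j}\circ_1 S$ is the desired direct sum. Finally, a small point: your choice $D_i|_{V_i^{(q,r)}}=(q-1-r)\Id$ requires the scalars $0,1,\dots,n-1$ to be pairwise distinct, hence $\cha\kk=0$ or $\cha\kk\ge n$; replacing $q-1-r$ by $\omega_{q-r}$ for arbitrary distinct $\omega_1,\dots,\omega_n\in\kk$ (available since $\kk$ is infinite) removes this restriction and brings your argument exactly to the paper's level of generality.
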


\begin{proof}
    Let us first consider the \textbf{Segre format}, so we assume $d_1 =
    \cdots = d_e = 1$. The general case will follow easily from this subcase.

Let $(L_1,\dots,L_e)\in\cen T$ generate the subalgebra of $\cen T$ isomorphic
to $\kk[\eps]/(\eps^n)$.
Then, by \Cref{teo: epsilon}, there exist $T_1,\dots,T_n$ with
$$T_k\in\bigotimes_{i=1}^e L_i^{k-1}(\Ker L_i^k)$$
such that, for any $E_i$ such that $V_i=E_i\oplus\Ker L_i$, $T$ can be written as
\begin{equation}\label{eq:desiredTensor}
T=\sum_{k=1}^n\sum_{\delta_1+\cdots+\delta_e=k-1}M_1^{\delta_1}\circ_1\cdots\circ_{d-1}M_e^{\delta_e}\circ_e T_{k},
\end{equation}
where $M_i$ is the inverse of the map $L_i\colon {E_i}\to {\im L_i}$.

The field $\kk$ is algebraically closed, hence infinite. Fix pairwise distinct
elements $\omega_1, \ldots ,\omega_n$ of $\kk$. (The use of the letter
$\omega$ will be explained in \autoref{ref:rootsOfUnitChoice:ex}). Since these elements are distinct,
for every $1\leq k\leq n$, the vectors
\begin{align*}
   & (1, 1, 1, \ldots ,1)\\
   & (\omega_1, \omega_2, \ldots, \omega_k)\\
   & (\omega_1^2, \omega_2^2, \ldots, \omega_k^2)\\
   & \phantom{mmm} \vdots \\
   &  (\omega_1^{k-1}, \omega_2^{k-1}, \ldots, \omega_k^{k-1})
\end{align*}
are linearly independent, by the Vandermonde's determinant, so there exist coefficients $\alpha_{k,1}, \ldots
,\alpha_{k,k}$ in $\kk$ such that
\begin{align}\label{eq:coeffs}
    & \alpha_{k,1}\omega_1^{\gamma-1} +  \cdots +
    \alpha_{k,k}\omega_k^{\gamma-1} =
    0\quad\mbox{for every}\quad 1\leq \gamma\leq k-1\\\notag
    & \alpha_{k,1}\omega_1^{k-1} +  \cdots + \alpha_{k,k}\omega_k^{k-1} = 1
\end{align}
\newcommand{\Mtw}{\widetilde{M}}%
\newcommand{\Vtw}{\widetilde{V}}%
\newcommand{\Mtwpar}[2]{\Mtw_{#1, #2}}%

Let us use the decomposition from \autoref{rem:Jordan} so that every $V_i$
decomposes as
\[
    V_i=\bigoplus_{\substack{q=1,\dots,n\\r=0,\dots,q-1}}V_i^{(q,r)},
\]
with $L_i^{k-1}(\Ker L_i) = V_i^{(n,n-1)}\oplus  \cdots \oplus V_i^{(k,k-1)}$
for every $1\leq k\leq n$.
For every $1\leq i\leq e$, $1\leq j\leq n$ define the
linear operator $\Mtwpar{i}{j}\colon L_i^{j-1}(\Ker L_i)\to V_i^{(\geq j,
\bullet)}[t]$, depending on the
formal variable $t$, as follows. For every $j\leq q \leq n$, we have
\begin{equation}\label{eq:Mdef}
    \Mtwpar{i}{j}(t)|_{V^{(q,q-1)}} := \Id_{V^{(q, q-1)}} + t \omega_{j} M_i + (t\omega_j M_i)^2 +
    \cdots + (t\omega_j M_i)^{q-1} \colon V_i^{(q,q-1)}\to
    V_i^{(q, \bullet)}[t]
\end{equation}
All these restrictions are well defined. Observe that
\begin{equation}\label{eq:residue}
    \Mtwpar{i}{j}(t) \equiv \Id + t \omega_{j} M_i + (t\omega_j M_i)^2 +
    \cdots + (t\omega_j M_i)^{j-1}\colon L_i^{j-1}(\Ker L_i)\to V_i^{(\geq
    j,\bullet)}[t] \mod t^j
\end{equation}
Consider the family of tensors, parameterised by $t$, given by
\[
    \sum_{k=1}^n t^{n-k}\sum_{j=1}^k \alpha_{k,j}\cdot
    \Mtwpar{1}{j}(t)\circ_1
    \Mtwpar{2}{j}(t)\circ_2 \cdots \Mtwpar{e}{j}(t)\circ_e T_k.
\]
We compute that
\begin{align*}
    &\frac{1}{t^{n-1}}\sum_{k=1}^n t^{n-k}\sum_{j=1}^k \alpha_{k,j}\cdot
    \Mtwpar{1}{j}(t)\circ_1
    \Mtwpar{2}{j}(t)\circ_2 \cdots \Mtwpar{e}{j}(t)\circ_e T_k
    \stackrel{\eqref{eq:residue}}{=}\\
    &\frac{1}{t^{n-1}}\sum_{k=1}^n t^{n-k}\sum_{j=1}^k
    \alpha_{k,j}\left(\sum_{\gamma=1}^{k}\sum_{\delta_1+\cdots+\delta_e=\gamma-1}
    (t\omega_j M_1)^{\delta_1} \circ_1
    (t\omega_j M_2)^{\delta_2} \circ_2 \cdots
    (t\omega_j M_e)^{\delta_e} \circ_e T_k + t^{k}\left(  \ldots
    \right)\right) = \\
    &\frac{1}{t^{n-1}}\sum_{k=1}^n t^{n-k}\left(\sum_{\gamma=1}^{k}\left( \sum_{j=1}^k
    \alpha_{k,j}\omega_j^{\gamma-1} \right)t^{\gamma-1}
    \sum_{\delta_1+\cdots+\delta_e=\gamma-1}M_1^{\delta_1}\circ_1\cdots
    \circ_{e-1}M_e^{\delta_e}\circ_e T_k + t^{k}( \ldots )\right) \stackrel{\eqref{eq:coeffs}}{=}\\
    &\frac{1}{t^{n-1}}\sum_{k=1}^n t^{n-1} \left( \sum_{\delta_1+\cdots+\delta_e=k-1}M_1^{\delta_1}\circ_1\cdots
    \circ_{e-1}M_e^{\delta_e}\circ_e T_k + t( \ldots )\right)
    \stackrel{\eqref{eq:desiredTensor}}{=} T + t( \ldots ),
\end{align*}
where the part $t^k( \ldots )$ appears since $T_k$ may have nonzero parts in
$V^{(q,q-1)}$ for $q > k$ and then $\Mtwpar{\bullet}{j}$ contain summands of the
form $t^{\ell} \omega_{j}^{\ell} M_{\bullet}^{\ell}$ for $\ell \geq k$, which
are not taken into account in the previous sum.

The displayed equation proves that $T$ is a limit of the tensors of the form
\[
    \sum_{k=1}^n \sum_{j=1}^k t^{n-k}\alpha_{k,j}\cdot
    \Mtwpar{1}{j}\circ_1
    \Mtwpar{2}{j}\circ_2 \cdots \Mtwpar{e}{j}\circ_e T_k =
    \sum_{j=1}^n \left(\sum_{k=j}^n t^{n-k}\alpha_{k,j}\cdot
    \Mtwpar{1}{j}\circ_1
    \Mtwpar{2}{j}\circ_2 \cdots \Mtwpar{e}{j}\circ_e T_k\right).
\]
It remains to deduce that these, for $t\neq 0$, yield the desired direct sum
with $n$ summands.

Fix a coordinate $1\leq i\leq e$, take a nonzero $\lambda\in \kk$ and consider the map
\[
    \Mtw_i(\lambda)\colon V_i^{(n,n-1)} \oplus \left( V_i^{(n,n-1)}\oplus V_i^{(n-1, n-2)} \right) \oplus
    \cdots \oplus \left( V^{(n,n-1)} \oplus V_i^{(n-1, n-2)} \oplus \cdots
    \oplus V_i^{(1, 0)} \right)\to V_i
\]
given by $\Mtw_i(\lambda) := \Mtwpar{i}{n}(\lambda) \oplus \Mtwpar{i}{n-1}(\lambda) \oplus \cdots
\oplus \Mtwpar{i}{1}(\lambda)$. We claim that this map is an isomorphism. Both
sides have same dimension
\[
    n\dim_{\kk} V_i^{(n,n-1)} + (n-1)\dim_{\kk} V_i^{(n-1,n-2)}+ \cdots + \dim_{\kk} V_i^{(1, 0)},
\]
so that it is enough to show
that the map is surjective. To do this, it is enough to show that the image
contains $V^{(k,k-1)}\oplus  \cdots \oplus V^{(k,0)}$ for every $1\leq k\leq n$.

Restrict $\Mtw_i(\lambda)$ to the subspace $\underbrace{V^{(k,k-1)}\oplus  \cdots \oplus
V^{(k,k-1)}}_{k}$, which appears in the domain. The image of this subspace is
in $V^{(k,k-1)}\oplus  \cdots \oplus V^{(k,0)}$ and, by~\eqref{eq:Mdef}, the restriction is given by a
diagonal matrix with blocks
\[
    \begin{pmatrix}
        1 & 1 &  \ldots & 1\\
        \lambda\omega_1 & \lambda\omega_2 & \ldots & \lambda\omega_k\\
        (\lambda\omega_1)^2 & (\lambda\omega_2)^2 & \ldots & (\lambda\omega_k)^2\\
         &&\ldots\\
         (\lambda\omega_1)^{k-1} & (\lambda\omega_2)^{k-1} & \ldots &
         (\lambda\omega_k)^{k-1}\\
    \end{pmatrix}
\]
hence the map $\Mtw_i(\lambda)$ is indeed an isomorphism.
For every $1\leq i\leq e$ and $1\leq k\leq n$ define
\[
    \Vtw_i^{(k)}(\lambda) := \Mtw_i(\lambda)\left( V_i^{(n,n-1)}\oplus  \cdots
    \oplus
    V_i^{(k,k-1)} \right) = \Mtwpar{i}{k}(\lambda)\left( V_i^{(n,n-1)}\oplus  \cdots
    \oplus
    V_i^{(k,k-1)} \right),
\]
so that $V_i = \Vtw_i^{(1)}(\lambda)\oplus \cdots \oplus
\Vtw_i^{(n)}(\lambda)$. Directly by definitions, { $$T^{(j)}(\lambda) \coloneqq \sum_{k=j}^n
\lambda^{n-k}\alpha_{k,j}\Mtwpar{1}{j}(\lambda)\circ_1
\Mtwpar{2}{j}(\lambda)\circ_2 \cdots \Mtwpar{e}{j}(\lambda)\circ_e T_k$$} lies in
$V_1^{(j)}(\lambda)\ootimes V_e^{(j)}(\lambda)$ so
\begin{align*}
    &\frac{1}{\lambda^{n-1}}\sum_{k=1}^n \sum_{j=1}^k \lambda^{n-k}\alpha_{k,j}\cdot
    \Mtwpar{1}{j}(\lambda)\circ_1
    \Mtwpar{2}{j}(\lambda)\circ_2 \cdots \circ_{e-1}\Mtwpar{e}{j}(\lambda)\circ_e T_k =\\
    &\frac{1}{\lambda^{n-1}}\sum_{j=1}^n\sum_{k=j}^n \lambda^{n-k}\alpha_{k,j}\cdot
    \Mtwpar{1}{j}(\lambda)\circ_1
    \Mtwpar{2}{j}(\lambda)\circ_2 \cdots \circ_{e-1}\Mtwpar{e}{j}(\lambda)\circ_e T_k = \frac{1}{\lambda^{n-1}}\sum_{j=1}^n
    T^{(j)}(\lambda)
\end{align*}
is a direct sum with $n$ nonzero factors.

Now, let us go back to \textbf{arbitrary format}, that is,
{
\[
    T\in S^{d_1}V_1 \ootimes S^{d_e}V_e\injects\underbrace{V_1\ot
    \cdots \ot V_1}_{d_1} \ot \underbrace{V_2 \ootimes V_2}_{d_2}\ootimes \underbrace{V_e \ootimes
    V_e}_{d_e}.
\]
}
From the above, we know that $T$ is a limit of direct sums $T^{(1)} +  \cdots
+ T^{(n)}$, where $T^{(j)}$ lie in $V_1^{\otimes d_1}\ootimes V_e^{\otimes d_e}$. Thanks to \autoref{lemma:simmetriciCompatibili}, we see that
actually $T^{(1)}, \ldots ,T^{(n)}$ lie in the subspace $S^{d_1}V_1 \ootimes
S^{d_e}V_e$, which yields the desired claim for $T$ in the Segre-Veronese
format.
\end{proof}

\begin{example}\label{ref:rootsOfUnitChoice:ex}
    Let us now discuss why $\omega_1, \ldots ,\omega_n$ are denoted so in the
    proof of \autoref{prop:direct_sum_limit}.
    In the setup of \autoref{rem:Jordan}, suppose that there is only one
    column, so that $V = V^{(n,\bullet)}$. In this case, take $\omega_1,
    \ldots ,\omega_n$ to be $n$-th roots of unity in $\kk$. The advantage of
    this choice is that
    \[
        \sum_{j=1}^n\omega_{j}^{\gamma}=0
    \]
    for every $\gamma=1,\dots,n-1$ and
    \[
        \sum_{j=1}^n\omega_{j}^{n}=n,
    \]
    so the coefficients $\alpha_{n, \bullet}$ become very easy. Regretfully, there
    seems to be no such nice choice for more than one column, in general.
\end{example}


\begin{exam}
Let $V=\kk^6$ and
$\calB=(x^{(3,2)},x^{(3,1)},x^{(3,0)},x^{(2,1)},x^{(2,0)},x^{(1,0)})$ a basis
for $V$. Consider the cubic polynomial $T\in S^3 V$, defined as
\[
T= x^{(3,2)} x^{(2,1)} x^{(1,0)}-2x^{(3,1)} x^{(3,2)}
x^{(2,1)}-(x^{(3,2)})^2 x^{(2,0)}+3x^{(3,0)}
(x^{(3,2)})^2+3(x^{(3,1)})^2  x^{(3,2)}.
\]
A straightforward computation shows that $(L,L,L)\in \cen T$, where $L$ is the
endomorphism of $\kk^6$ whose matrix with respect to the basis $\calB$ is
$$
\begin{pmatrix}
0 & 1 & 0 & 0 & 0 & 0\\
0 & 0 & 1 & 0 & 0 & 0\\
0 & 0 & 0 & 0 & 0 & 0\\
0 & 0 & 0 & 0 & 1 & 0\\
0 & 0 & 0 & 0 & 0 & 0\\
0 & 0 & 0 & 0 & 0 & 0
\end{pmatrix}.
$$
The operator $L$ is nilpotent with nilpotency index $3$ and in our basis it
yields {$$
L(\an{x^{(3, 2)}, x^{(2,1)}, x^{(1,0)}}) = 0,\quad L(x^{(3, 0)}) =
x^{(3, 1)},\quad L(x^{(3,1)}) = x^{(3, 2)},\quad L(x^{(2,0)}) = x^{(2,1)}.$$} 
We know that, by
\autoref{cor: simmetrici epsilon}, it is possible to  write $T$ as
\[
    T = T_1 + x^{(2, 0)}\frac{\partial T_2}{\partial x^{(2,1)}} + x^{(3,
    1)}\frac{\partial T_2}{\partial x^{(3, 2)}} + x^{(3,
    0)}\frac{\partial T_3}{\partial x^{(3, 2)}}
    +\frac{1}{2}\left(x^{(3, 1)}\right)^2\frac{\partial^2 T_3}{\partial^2 x^{(3, 2)}}
\]
where $T_1 \in S^3(\an{x^{(3, 2)}, x^{(2,1)}, x^{(1,0)}})$, $T_2\in
S^3(\an{x^{(3, 2)}, x^{(2, 1)}})$, $T_3\in S^3(\an{x^{(3, 2)}})$.
Following the proof or directly from the form of $T$, we can compute
\[
    T_3=(x^{(3,2)})^3,\quad T_2=-x^{(2,1)} (x^{(3,2)})^2,\quad T_1=x^{(1,0)} x^{(2,1)} x^{(3,2)}.
\]
The inverse of the map \[
L\colon\langle x^{(3,0)},x^{(3,1)},x^{(2,0)}\rangle\to\langle x^{(3,1)},x^{(3,2)},x^{(2,1)}\rangle\] is the map \[M\colon \langle x^{(3,1)},x^{(3,2)},x^{(2,1)}\rangle\to \langle x^{(3,0)},x^{(3,1)},x^{(2,0)}\rangle,
\]
defined by the relations
\[
M(x^{(3,1)})=x^{(3,0)},\quad M(x^{(3,2)})=x^{(3,1)},\quad M(x^{(2,1)})=x^{(2,0)}.
\]
In order to satisfy the conditions on $\omega_1,\omega_2,\omega_3$ and on $\alpha_{k,j}$ for $1\leq k\leq 3$ and $1\leq j\leq k$, we can choose $\omega_1=1,\omega_2=0,\omega_3=-1$ and 
$$\alpha_{1,1}=1,\quad (\alpha_{2,1},\alpha_{2,2})=(1,-1),\quad (\alpha_{3,1},\alpha_{3,2},\alpha_{3,3})=(1/2,-1,1/2).$$
We consider now the operators $\tilde{M}_1,\tilde{M}_2$ and $\tilde{M}_3$ defined, according to formulas \eqref{eq:Mdef} and \eqref{eq:residue}, by the following restrictions
\begin{align*}
    \tilde{M}_{1}(t)|_{V^{(3,2)}} &\coloneqq \Id_{V^{(3, 2)}} + tM + t^2M^2\colon V^{(3,2)}\to
    (V^{(3,2)}\oplus V^{(3,1)}\oplus V^{(3,0)})[t]\\
    \tilde{M}_{2}(t)|_{V^{(3,2)}} &\coloneqq \Id_{V^{(3, 2)}} \colon V^{(3,2)}\to
    (V^{(3,2)}\oplus V^{(3,1)}\oplus V^{(3,0)})[t]\\    \tilde{M}_{3}(t)|_{V^{(3,2)}} &\coloneqq \Id_{V^{(3, 2)}} - t M + t^2 M^2\colon V^{(3,2)}\to
    (V^{(3,2)}\oplus V^{(3,1)}\oplus V^{(3,0)})[t]\\
    \tilde{M}_{1}(t)|_{V^{(2,1)}} &\coloneqq \Id_{V^{(2, 1)}} + tM\colon V^{(2,1)}\to
   (V^{(2,1)}\oplus V^{(2,0)})[t]\\
        \tilde{M}_{2}(t)|_{V^{(2,1)}} &\coloneqq \Id_{V^{(2, 1)}} \colon V^{(2,1)}\to
    (V^{(2,1)}\oplus V^{(2,0)})[t]\\
    \tilde{M}_{1}(t)|_{V^{(1,0)}} &\coloneqq \Id_{V^{(1, 0)}}\colon V^{(1,0)}\to
    V^{(1,0)}[t].
\end{align*}

We have
\begin{align}
\label{formula:example}
\tilde{M}_1(t)\circ T_1&=x^{(1,0)}\bigl(x^{(2,1)}+t x^{(2,0)}\bigr)(x^{(3,2)}+t x^{(3,1)}+t^2x^{(3,0)}),\\
\notag\tilde{M}_1(t)\circ T_2&=-(x^{(2,1)}+t x^{(2,0)})(x^{(3,2)}+t x^{(3,1)}+t^2 x^{(3,0)})^2,\\
\notag\tilde{M}_1(t)\circ T_3&=(x^{(3,2)}+t x^{(3,1)}+t^2 x^{(3,0)})^3,\\
\notag\tilde{M}_2(t)\circ T_2&=-x^{(2,1)}(x^{(3,2)})^2,\\
\notag\tilde{M}_2(t)\circ T_3&=(x^{(3,2)})^3,\\
\notag\tilde{M}_3(t)\circ T_3&=(x^{(3,2)}-t x^{(3,1)}+t^2 x^{(3,0)})^3.
\end{align}
By setting
$$
    S_t\coloneqq t^2\tilde{M}_1(t)\circ T_1+t(\tilde{M}_1(t)\circ T_2-\tilde{M}_2(t)\circ T_2)+\frac{1}{2}\tilde{M}_1(t)\circ T_3-\tilde{M}_2(t)\circ T_3+\frac{1}{2}\tilde{M}_3(t)\circ T_3,
$$
we have to consider know, according to \Cref{prop:direct_sum_limit}, the limit 
\[
\lim_{t\to 0}\frac{1}{t^2}S_t.
\]
By substituting \ref{formula:example} in $S_t$, we get
\begin{align*}
S_t&=t^2\Bigl(x^{(1,0)}x^{(2,1)}x^{(3,2)}-x^{(2,0)}(x^{(3,2)})^2-2x^{(2,0)}x^{(3,2)}x^{(3,1)}+3\bigl(x^{(3,2)}(x^{(3,1)})^2+(x^{(3,2)})^2x^{(3,0)}\bigr)\Bigr)\\
&\hphantom{{}={}}+t^3(\dots).
\end{align*}
We get $S_t=t^2T+t^3(\dots)$, so that 
\[
\lim_{t\to 0}\frac{1}{t^2}S_t=T.
\]
To see that $T$ is indeed a limit of direct sums, it is enough to compute
$$T^{(j)}(t) \coloneqq \sum_{k=j}^n t^{n-k}\alpha_{k,j}\tilde{M}_{j}(t)\circ T_k$$
for $j=1,2,3$. We have
\begin{align*}
    T^{(1)}(t)&=t^2x^{(1,0)}\bigl(x^{(2,1)}+t x^{(2,0)}\bigr)(x^{(3,2)}+t x^{(3,1)}+t^2x^{(3,0)})\\
    &\hphantom{{}={}}-t(x^{(2,1)}+t x^{(2,0)})(x^{(3,2)}+t x^{(3,1)}+t^2 x^{(3,0)})^2+\frac{1}{2}(x^{(3,2)}+t x^{(3,1)}+t^2 x^{(3,0)})^3\\
    T^{(2)}(t)&=tx^{(2,1)}(x^{(3,2)})^2+\frac{1}{2}(x^{(3,2)})^3\\
    T^{(3)}(t)&=\frac{1}{2}(x^{(3,2)}-t x^{(3,1)}+t^2 x^{(3,0)})^3.
\end{align*}
Therefore, we have $S_t=T^{(1)}+T^{(2)}+T^{(3)}$ and thus $S_t$ is a direct sum for any $t$ because it belongs to
$$S^3\bigl(\langle x^{(1,0)},x^{(2,1)}+tx^{(2,0)},x^{(3,2)}+tx^{(3,1)}+t^2x^{(3,0)}\rangle\bigr)\oplus S^3\bigl(\langle x^{(3,2)},x^{(2,1)}\rangle\bigr)\oplus S^3\langle x^{(3,2)}-t x^{(3,1)}+t^2 x^{(3,0)}\rangle.$$
\end{exam}

\bibliographystyle{alpha}
\bibliography{111algebras.bib}

\end{document}